\documentclass[11pt,a4paper]{amsart}
\usepackage[T1]{fontenc}
\usepackage[utf8]{inputenc}
\usepackage{lmodern}
\usepackage[margin=30mm]{geometry}
\usepackage{microtype}
\usepackage[all]{xy}
\usepackage{amsmath,amssymb,amsthm,mathtools,mathrsfs}
\usepackage{booktabs}
\usepackage{array}
\usepackage[shortlabels]{enumitem}
\usepackage{tikz-cd}
\usepackage{xcolor}
\usepackage[colorlinks=true,linkcolor=blue!55!black,
  citecolor=blue!55!black,urlcolor=blue!55!black]{hyperref}
\usepackage[nameinlink,noabbrev]{cleveref}

\newcommand{\R}{\mathbb R}
\newcommand{\er}{\mathbb R}
\newcommand{\N}{\mathbb N}
\newcommand{\F}{\mathcal F}

\newcommand{\g}{\boldsymbol{g}}
\newcommand{\f}{\boldsymbol{f}}

\newcommand{\e}{\boldsymbol{e}}
\newcommand{\uu}{\boldsymbol{u}}
\newcommand{\vv}{\boldsymbol{v}}
\newcommand{\bmu}{\boldsymbol{\mu}}

\newcommand{\Lipz}{\operatorname{Lip}_0}
\newcommand{\Lip}{\operatorname{Lip}}

\newcommand{\linspan}{\operatorname{span}}
\newcommand{\supp}{\operatorname{supp}}

\newcommand{\dist}{\operatorname{dist}}
\newcommand{\co}{\operatorname{co}}

\newcommand{\norm}[1]{\left\lVert #1\right\rVert}
\newcommand{\abs}[1]{\left\lvert #1\right\rvert}
\newcommand{\pair}[2]{\left\langle #1,#2\right\rangle}
\newcommand{\Int}{\operatorname{Int}}
\newcolumntype{P}[1]{>{\raggedright\arraybackslash}p{#1}}
\newcommand{\linearization}[1]{\widetilde{#1}}
\newcommand{\Cyl}{\operatorname{Cyl}}

\theoremstyle{plain}
\newtheorem{theorem}{Theorem}[section]
\newtheorem{proposition}[theorem]{Proposition}
\newtheorem{lemma}[theorem]{Lemma}
\newtheorem{corollary}[theorem]{Corollary}
\newtheorem{fact}[theorem]{Fact}
\theoremstyle{definition}

\theoremstyle{remark}
\newtheorem{remark}[theorem]{Remark}

\theoremstyle{plain}
\newtheorem{thmx}{Theorem}

\makeatletter
\newcommand{\nameditem}[2]{%
    \item[\textup{(#1)}]%
    \phantomsection
    \def\@currentlabel{\textup{(#1)}}%
    \label{#2}%
}
\makeatother

\begin{document}
\title[Lipschitz-free spaces and complementability]{Lipschitz-free spaces over separable Banach spaces and their complementability in the bidual}
\author[{M. C\' uth}]{Marek C\'uth}
\author[{O. Kalenda}]{Ond\v{r}ej F.K. Kalenda}

\email{cuth@karlin.mff.cuni.cz}
\email{kalenda@karlin.mff.cuni.cz}

\address{Department of Mathematical Analysis \\
Faculty of Mathematics and Physics\\ Charles University\\
Sokolovsk\'{a} 83, 186 \ 75\\Praha 8, Czech Republic}
\email{cuth@karlin.mff.cuni.cz}
\email{kalenda@karlin.mff.cuni.cz}

\subjclass[2020]{46B20, 46B80 (Primary); 46B03, 46B04, 46B07, 46E40 (Secondary)}
\keywords{Lipschitz-free space, complementability in the bidual, Hilbert space, Gaussian measure}

\begin{abstract} We prove that the Lipschitz-free space over a separable Banach space is complemented in its bidual if and only if the underlying Banach space is isomorphic to a Hilbert space. This answers negatively a long-standing question of Godefroy, Lancien, and Zizler concerning the case of $\ell_1$. We also characterize the existence of a projection of norm one and extend both characterizations to nonempty open convex subsets. Our approach yields several further results which we believe may be useful for understanding the structure of Lipschitz-free spaces over Banach spaces. These include an isometric representation of such spaces as quotients of Bochner $L^1$ spaces constructed using Gaussian measures.
\end{abstract}
\maketitle

\section{Introduction and main results}

Lipschitz-free spaces, also known as Arens--Eells spaces or transportation cost spaces, are Banach spaces canonically associated with metric spaces. Since the fundamental work of Godefroy and Kalton~\cite{GodefroyKalton}, they have become one of the most actively studied classes of Banach spaces, with strong connections to both linear and nonlinear Banach space theory. Recent advances illustrate, for example, their connections with metric geometry, with applications to geometric group theory~\cite[Theorem~E]{Gartland}, with operator theory through metric characterizations of
properties of Lipschitz linearizations~\cite{FJLPPQ25}, or with the extension of Lipschitz maps with values in Banach spaces~\cite[Section~5.2]{Naor24}. Ideas from Choquet theory have also inspired recent work on representing measures in Lipschitz-free spaces~\cite{S26}.

In this paper we study when Lipschitz-free spaces over Banach spaces are complemented in their biduals. A long-standing problem in this direction, formulated in the survey of Godefroy, Lancien, and Zizler~\cite[Problem~16]{GLZ14}, asks whether $\F(\ell_1)$ is complemented in its bidual. A positive answer would show that $\ell_1$ is determined by its Lipschitz structure: every Banach space Lipschitz isomorphic to $\ell_1$ would then be linearly isomorphic to $\ell_1$. The problem has attracted sustained attention and has been repeated in the monographs of Guirao, Montesinos, and Zizler~\cite[Problem~235]{GMZ16} and Baudier and Lancien~\cite[Problem~11]{BaudierLancien}, as well as in several research articles; see, e.g., \cite[Section~1]{ckk-mathematika}, \cite[p.~583, after Corollary~4.6]{PZ18}, \cite[Section~1]{CKK}, \cite[p.~2095]{AliagaPerneckaNormal22}, \cite[Section~1]{AliagaPernecka}, and \cite[pp.~1--2]{BGLMPS26}.

The finite-dimensional case was settled by the present authors jointly with Kaplick\'y~\cite{CKK}, who proved that $\F(E)$ is complemented in its bidual for every finite-dimensional normed space $E$, with a projection of norm one when $E$ is Euclidean. No example was known, however, of an infinite-dimensional Banach space $X$ for which $\F(X)$ is complemented in its bidual. The same paper explicitly asks whether $\F(\ell_2)$ is complemented in its bidual and whether a projection of norm one exists~\cite[Question~1.6]{CKK}.

A related line of research concerns the stronger property of $L$-embeddedness. Proch\'azka and Rueda Zoca~\cite[Corollary~4.6]{PZ18} showed that $\F(\ell_p)$ is not $L$-embedded for $1<p<\infty$. Aliaga, Perneck\'a, and Smith~\cite[Section~6]{APS25} constructed explicit $L$-projections for free spaces over
proper purely $1$-unrectifiable metric spaces. More recently, Braga, Gartland, Lancien, Motakis, Perneck\'a, and Schlumprecht~\cite{BGLMPS26} proved that $\F(X)$ is not $L$-embedded for any Banach space $X$ of dimension at least two.

One of our main results resolves the bidual complementability questions above by giving a complete  characterization for free spaces over separable Banach spaces and, more generally, over their open convex subsets.

\begin{thmx}\label{thm:Intro1}
Let $X$ be a separable Banach space and let $\Omega\subset X$
be a nonempty open convex subset.
\begin{enumerate}[$(a)$]
 \item $\F(\Omega)$ is complemented in its bidual if and
 only if $X$ is isomorphic to a Hilbert space.
 \item $\F(\Omega)$ is $1$-complemented in its bidual if
 and only if either $\dim X\le2$ or $X$ is isometric to a
 Hilbert space.
\end{enumerate}
\end{thmx}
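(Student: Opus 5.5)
The plan is to prove both directions of each part by passing between $\F(\Omega)$ and a space of vector-valued integrable functions through a linear ``derivative'' correspondence. Because $\Omega$ is open and convex, the Lipschitz functions on $\Omega$ are exactly those whose distributional gradient is bounded. Dualizing, $\F(\Omega)$ should be an isometric quotient of a Bochner space $L^1(\mu;X)$, where $\mu$ is a nondegenerate Gaussian measure on $X$, or on $\Omega$ after localization. We choose a Gaussian measure because separable infinite-dimensional spaces carry no Lebesgue measure, and Gaussian measures admit Cameron--Martin integration by parts, which replaces $\int \langle\nabla f,\cdot\rangle\,d\lambda$. Concretely, the quotient map is $Q\colon L^1(\mu;X)\to\F(\Omega)$, $Q(g)=\int \delta$-type ``line elements'' (weak$^*$ Bochner integrals of $\delta$-segments along directions $g(x)$). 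Its adjoint sends $f\in\Lipz(\Omega)$ to the Gateaux derivative field $x\mapsto Df(x)\in L^\infty(\mu;X^*)$ in a suitable weak$^*$ sense. Separability gives Rademacher/Aronszajn–Phelps differentiability of $f$ $\mu$-a.e. along the Cameron–Martin directions, which is what we need.

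For the ``if'' parts we use the Hilbert structure. If $X\simeq H$ is a Hilbert space, we mimic the finite-dimensional construction of \cite{CKK}. Given $\phi\in\F(\Omega)^{**}$, we restrict it to the functionals on $\Lipz(\Omega)$ of the form $f\mapsto\int\langle Df,\psi\rangle\,d\mu$. This defines an element of $L^1(\mu;X^{**})^{**}$. We then average, convolving with Gaussian translates, which smooths. Reflexivity of $H$ together with the Radon–Nikodým property lets us pull the averaged element back to $L^1(\mu;H)$ and then apply $Q$. When the norm is Hilbertian, the averaging is a contraction and the projection has norm one. For an arbitrary isomorph of a Hilbert space we only get a bounded projection, which gives (a). For $\dim X\le 2$ with an arbitrary norm, part (b) needs a norm-one projection for every two-dimensional norm. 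Here I would follow the planar argument of \cite{CKK}: in dimension at most two every norm admits an ``orthogonal-like'' decomposition (Birkhoff orthogonality is symmetric up to the dual norm), so the smoothing kernel can be chosen compatible with the norm.

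For the ``only if'' parts, suppose $\F(\Omega)$ is $C$-complemented in its bidual. Complementation in the bidual passes to complemented subspaces and behaves well under ultraproduct and localization arguments. We first derive that the projection yields, for each finite-dimensional subspace $E\subset X$, a $C'$-complemented copy-structure that forces a uniform inequality on $E$. The idea is that the weak$^*$-limit of averaged derivative operators would give a bounded linear ``Lipschitz-to-linear averaging'' operator commuting with translations. By Kwapień's theorem this would force type~2 and cotype~2 uniformly, i.e.\ $X$ is isomorphic to Hilbert. A cleaner route: complementability in the bidual implies the existence of a bounded translation-invariant linear map $\Lipz(X)\to X^*$ extending the identity on $X^*$. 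Composing with Gaussian averaging yields Rademacher-type estimates, namely $\mathbb E\|\sum g_i x_i\|^2\simeq\sum\|x_i\|^2$. In the isometric case the norm-one projection forces equality, so by Jordan–von Neumann the norm is Euclidean on every three-dimensional subspace, and hence $X$ is Hilbert when $\dim X\ge3$.

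The main obstacle is the necessity direction: extracting a type/cotype-2 (or parallelogram) inequality from an abstract projection $\F(\Omega)^{**}\to\F(\Omega)$. I would first try to average the projection over translations, using an invariant mean on $X$, which is amenable as an abelian group, so that it becomes translation-equivariant. I would then identify translation-equivariant projections with Gaussian-type convolution kernels, and test them on the functions $f(x)=\|x\|$ restricted to finite-dimensional subspaces.
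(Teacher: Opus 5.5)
Your representation step is in the spirit of Theorem~\ref{thm:main}. However, both directions you build on it have genuine gaps.

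\textbf{The ``only if'' part.} There is no working mechanism here. Your ``cleaner route'' starts from a bounded translation-invariant map $\Lipz(X)\to X^*$ extending the identity on $X^*$. Such a map, even of norm one, exists for every Banach space by Lindenstrauss's theorem; in the paper it is the averaged-derivative projection of Corollary~\ref{cor:averaged-derivative-projection}. So it carries no information and cannot yield type or cotype $2$. The passage from the projection to $\mathbb{E}\norm{\sum g_ix_i}^2\simeq\sum\norm{x_i}^2$ is asserted, not derived.

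Your isometric version is also internally inconsistent. By rotation invariance of $(g_1,g_2)$, the identity $\mathbb{E}\norm{g_1x+g_2y}^2=\norm{x}^2+\norm{y}^2$ for all $x,y$ already gives the parallelogram law. So if a norm-one projection forced it, every two-dimensional space would be Euclidean. That contradicts the $\dim X\le 2$ exception you yourself claim.

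Averaging $Q$ over translations with an invariant mean does not help either. The averaged operator is defined only weak$^*$, so it lands in $\F(X)^{**}$ rather than $\F(X)$. Moreover, translations make no sense for $\Omega\ne X$.

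The missing idea is Theorem~\ref{thm:bidual-complemented-implies-Hilbert}: a $C$-projection $\F(\Omega)^{**}\to\F(\Omega)$ yields a projection of norm at most $C$ from $X$ onto every finite-dimensional $V$. The paper proves it as follows.
\begin{itemize}
\item It reduces to $V\subset E$ with $E$ finite-dimensional and extends $S\colon L^1(\gamma;V)\to\F(U_V)$ to a contraction $J\colon L^1(\gamma;E)\to\F(U_E)^{**}$, taken as a weak$^*$ cluster point of convolution operators.
\item It composes with $Q$ to get $H$ with $\norm{H}\le C$, and applies the linearization of a Lipschitz map $\varphi\colon X\to V$ to $H(w\cdot x)$. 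This gives a projection when $\varphi'=\operatorname{Id}_V$ on $\{w>0\}$.
\item The norm is pushed down to $C+\varepsilon$ by taking $\varphi$ with Lipschitz constant at most $1+\varepsilon$ but small sup-norm (a periodic construction). Uniform regularity of weakly compact sets in free spaces controls the part of $H(w B_E)$ away from $V$.
\end{itemize}
Lindenstrauss--Tzafriri then gives (a), and Kakutani's theorem on $3$-dimensional spaces gives (b). This also explains the exception: in dimension $2$ every subspace is trivially $1$-complemented.

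\textbf{The ``if'' part.} Your construction fails here as well. Convolving or averaging an element of the bidual cannot give a projection, because it does not fix $\F(\Omega)$; it smears $\delta(x)$, for instance. In infinite dimensions, translates of $\Gamma$ by vectors outside the Cameron--Martin space are singular to $\Gamma$, so Gaussian smoothing does not behave as you need. Nor can an element of $L^1(\mu;H)^{**}$ be ``pulled back'' to $L^1(\mu;H)$ by reflexivity and the Radon--Nikod\'ym property. The singular, finitely additive part of the bidual is exactly the difficulty. Finally, the Hilbert norm plays no real role in your argument, since convolution is contractive for any norm.

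In the paper, the bidual element is first restricted to the $1$-norming space of smooth cylindrical functions, which turns it into a vector measure $\bmu\in M(K,X)$. The representable fields $\g\nu$ form a closed decomposable subspace of $L^1(\nu;X)$. By Hiai--Umegaki, they are exactly the fields with values in measurable closed subspaces $T_\nu(x)$. The projection is the fibrewise orthogonal projection onto $T_\nu(x)$, glued over all $\nu$ as in CKK; this is where the Hilbert structure is used. For unbounded $\Omega$, compatible projections on bounded pieces are then glued separately.

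For $\dim X=2$, the relevant fact is not Birkhoff orthogonality. It is that every line is $1$-complemented by Hahn--Banach, together with a Borel selection of norm-one projections (from BGLMPS26b) in place of orthogonal projections.
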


In particular, $\F(\ell_1)$ is not complemented in its bidual, whereas $\F(\ell_2)$ is $1$-complemented in its bidual. The characterization also yields an isomorphic rigidity result: for every infinite-dimensional Banach space $Y$,
\[
 \F(Y)\cong\F(\ell_2)
 \quad\Longleftrightarrow\quad
 Y\cong\ell_2.
\]
Thus, among infinite-dimensional Banach spaces, the linear isomorphism class of $\ell_2$ is determined by that of its Lipschitz-free space. To the best of our knowledge, this is the first example of such a phenomenon.

In addition to the characterization above, we obtain several further results which we believe may be useful in the study of Lipschitz-free spaces over Banach spaces. We now describe our route to the main result and present several further results obtained along the way, which we believe may be useful in other contexts.

The first ingredient is the following representation theorem.

\begin{thmx}\label{thm:main}
Let $X$ be a nontrivial real separable Banach space and let $\Omega\subset X$ be an open convex set containing $0$. Let $\Gamma$ be a non-degenerate Gaussian probability measure on $X$. Denote by $\gamma$ the normalized restriction of $\Gamma$ to $\Omega$. For any $\g\in L^1(\gamma;X)$ 
define
$$S(\g)(f)=\int_\Omega \pair{f'(x)}{\g(x)}\,d\gamma(x),\quad f\in\Lipz(\Omega),$$
where $f'(x)$ denotes the G\^ateaux derivative of $f$ at $x$. 
Then $S$ is an isometric quotient of $L^1(\gamma;X)$ onto $\F(\Omega)$.
 Consequently, $\F(\Omega)$ is isometric to $L^1(\gamma;X)/\ker S$.
\end{thmx}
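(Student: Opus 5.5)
The plan is to show that the adjoint-type map $T\colon \Lipz(\Omega)\to L^\infty(\gamma;X^*)$, $f\mapsto f'$, is a well-defined linear isometric embedding that is weak$^*$-to-weak$^*$ continuous. Once this holds, $S$ is its pre-adjoint. Then $S$ is a quotient map onto a predual of $\Lipz(\Omega)$, and this predual can be identified with $\F(\Omega)$.

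\textbf{Step 1: well-definedness and the upper bound.} By Aronszajn--Christensen--Phelps, a Lipschitz $f$ on an open subset of a separable Banach space is Gâteaux differentiable outside an Aronszajn (equivalently, Gaussian) null set. Since $\Gamma$ is non-degenerate, $f'(x)$ exists for $\gamma$-a.e.\ $x$ and $\norm{f'(x)}\le \Lip(f)$. We need $x\mapsto f'(x)$ to be weak$^*$-measurable. Each directional derivative $\pair{f'(x)}{v}$ is a pointwise limit of difference quotients, so it is measurable. Because $X$ is separable, this gives weak$^*$ measurability, so $\pair{f'}{\g}$ is measurable and bounded by $\Lip(f)\norm{\g(x)}$. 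Hence $\abs{S(\g)(f)}\le \norm{\g}_1\Lip(f)$. It remains to show that $S(\g)$ lies in $\F(\Omega)$ and not merely in $\Lipz(\Omega)^*$. For this we check that $S(\g)$ is weak$^*$ continuous on bounded sets, i.e.\ $f_n\to f$ pointwise with bounded Lipschitz constants implies $S(\g)(f_n)\to S(\g)(f)$.

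It suffices to handle a dense class of $\g$, say $\g=\varphi v$ with $v\in X$ and $\varphi$ smooth, bounded, cylindrical and compactly supported in the direction $v$ inside $\Omega$. Then
\[
\int \varphi\, \partial_v f\,d\gamma=-\int f\,\bigl(\partial_v\varphi-\varphi\,\beta_v\bigr)\,d\gamma
\]
by Gaussian integration by parts along the Cameron--Martin direction $v$, where $\beta_v$ is the logarithmic derivative. This requires $v$ to lie in the Cameron--Martin space $H$, which is dense in $X$ by non-degeneracy. The right-hand side is continuous under bounded pointwise convergence by dominated convergence, using $\abs{f(x)}\le \Lip(f)\norm{x}$ and the integrability of $\norm{x}\abs{\beta_v}$ (Fernique). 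Care is needed because $\gamma$ is restricted to $\Omega$, so $\varphi$ must vanish near $\partial\Omega$ along lines in direction $v$; such functions should still be dense in $L^1(\gamma;X)$. Finally, $S$ is continuous, so $S(L^1)\subset\F(\Omega)$, since $\F(\Omega)$ is closed.

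\textbf{Step 2: the lower bound (main obstacle).} We need $\sup\{S(\g)(f): \norm{\g}_1\le 1\}=\Lip(f)$, i.e.\ $\operatorname{ess\,sup}_\gamma \norm{f'(x)}=\Lip(f)$. By the Hahn--Banach theorem this gives that $S$ maps the unit ball onto a dense subset of the unit ball of $\F(\Omega)$, and the standard argument for maps with $\norm{S}\le1$ then upgrades density to an exact quotient. To prove the essential-supremum identity, fix $x,y\in\Omega$ with $f(y)-f(x)$ close to $\Lip(f)\norm{y-x}$. Convexity and openness give a small ball $B$ around $0$ with $x+B,\,y+B\subset\Omega$. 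We integrate the directional derivative along the segments $[x+z,y+z]$ and average over $z$ against a Gaussian measure restricted to $B$:
\[
f(y)-f(x)\approx \int\!\!\int_0^1 \pair{f'(x+z+t(y-x))}{y-x}\,dt\,d\mu(z).
\]
This needs two facts: $f'$ exists along almost every such segment, and the image of the averaging measure under $z\mapsto x+z+t(y-x)$ is absolutely continuous with respect to $\gamma$. Translations by $x+t(y-x)$ are generally not in the Cameron--Martin space, so absolute continuity fails, and this is where the real difficulty sits. The first thing to try is to replace $x,y$ by points $x+h,\,y+h$ and the direction $y-x$ by nearby Cameron--Martin vectors, using density of $H$ and continuity of $f$, and to average over a Gaussian measure $\mu$ that is a scaled copy of $\Gamma$ shifted by Cameron--Martin vectors. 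One would then use that $\gamma$ and its Cameron--Martin shifts are mutually absolutely continuous. Measure-zero exceptional sets of $\gamma$ stay null under these shifts, so the essential supremum bounds the integrand, yielding $\operatorname{ess\,sup}\norm{f'}\ge \Lip(f)-\varepsilon$.
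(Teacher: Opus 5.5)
Your architecture matches the paper's: realize $f\mapsto f'$ as an isometric embedding of $\Lipz(\Omega)$ into $(L^1(\gamma;X))^*$, use Gaussian integration by parts on a dense class of fields to show that $S$ takes values in $\F(\Omega)$, and conclude by duality. There is, however, a genuine gap. The decisive inequality $\operatorname{ess\,sup}_\gamma\norm{f'}\ge\norm{f}_{\Lip}$ is left unproved, and the remedy you sketch fails as stated. When $X$ is infinite-dimensional, a dilated copy of $\Gamma$ is mutually singular with $\Gamma$ (e.g.\ by the Feldman--H\'ajek dichotomy; in Karhunen--Lo\`eve coordinates the law of large numbers already separates variances $1$ and $c^2$). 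Averaging against it therefore gives nothing controlled by a $\gamma$-essential supremum.

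The missing idea is that no measure has to be transported to a prescribed pair $x,y$. Fix $h\in H_\Gamma$, the Cameron--Martin space. Let $N$ be the $\Gamma$-null set of points of $\Omega$ where $f$ is not G\^ateaux differentiable or $\norm{f'}>L:=\operatorname{ess\,sup}_\gamma\norm{f'}$. Quasi-invariance gives $\Gamma(N-th)=0$ for every $t$. By Fubini, for $\Gamma$-a.e.\ $x$ the segment $x+[0,1]h$ meets $N$ in a Lebesgue-null set. The fundamental theorem of calculus then yields $\abs{f(x+h)-f(x)}\le L\norm h$ for $\Gamma$-a.e.\ $x$ in the open set $\Omega\cap(\Omega-h)$. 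Full support makes these $x$ dense there, and continuity of $f$ gives the inequality for all such $x$. Density of $H_\Gamma$ in $X$ then handles an arbitrary $y-x$. In your averaging language: let the starting point be distributed by $\Gamma$ itself restricted to a small ball around $x$, neither translated nor dilated, and shift only along Cameron--Martin directions.

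Your Step 1 also needs repair. A smooth cylindrical function is invariant under translations by the finite-codimensional subspace $\bigcap_i\ker x_i^*$. So if $X$ is infinite-dimensional and $\Omega$ is bounded (e.g.\ the unit ball of $\ell_2$), the only such function vanishing near $\partial\Omega$ along the chords in direction $v$ is $0$. Your class is then not dense in $L^1(\gamma;X)$. Use instead $\g=\psi h$ with $h\in H_\Gamma$ and $\psi$ Lipschitz with bounded support and $\dist(\supp\psi,X\setminus\Omega)>0$. Integration by parts holds for such $\psi$ after extension by zero, and Fernique becomes unnecessary because $f\psi$ is bounded. Density of this class follows from Radon regularity and Urysohn-type Lipschitz cut-offs.

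Finally, passing from $\operatorname{ess\,sup}\norm{f'}=\norm{f}_{\Lip}$ to $\norm{S^*f}=\norm{f}_{\Lip}$ is not automatic, since $f'$ is only weak$^*$-measurable. You need separability of $X$ to build a countably valued norming field $\g$, as in the paper's Lemma~\ref{lem:field-functional}.
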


Let us point out that the assumption $0\in\Omega$ is not essential -- $0$ may be replaced by any other distinguished point $x\in \Omega$ and $\Lipz(\Omega)$ should be considered with respect to that distinguished point.

The above theorem may be viewed as an infinite-dimensional analogue of \cite[Theorem 1.1]{ckk-mathematika}. As in \cite{ckk-mathematika}, the adjoint operator of $S$ is the differentiation operator $f\mapsto f'$. But since the duality of Lebesgue-Bochner spaces is more complicated, the formulation requires some care. More precisely, $S^*:\Lipz(\Omega)\to (L^1(\gamma;X))^*$ is given by the formula
\begin{equation}\label{eq:D-main}
 (S^* f)(\g)=\int_\Omega \pair{f'(x)}{\g(x)}\,d\gamma(x)
 \qquad\g\in L^1(\gamma;X), f\in\Lipz(\Omega).
\end{equation}
This formula gives meaning to the claim that $S^*f=f'$. If $X^*$ is separable, then $(L^1(\gamma;X))^*=L^\infty(\gamma;X^*)$ and hence $S^*$ may be interpreted as an operator $\Lipz(\Omega)\to L^\infty(\gamma;X^*)$, similarly as in the finite-dimensional case. If $X^*$ is not separable, not only does the duality fail, but the mapping $f'$ need not be strongly measurable.

Theorem~\ref{thm:main} will be proved in Section~\ref{sec:proof-reprez} below. Then it is used to prove the main result, but we are convinced it has also an independent value. It is illustrated by easy proofs of two known results provided in Section~\ref{sec:first-consequences}.

The second ingredient is the `if' part of Theorem~\ref{thm:Intro1} formulated in the following result.

\begin{thmx}\label{thm:complement}
   Let $\Omega$ be a nonempty open convex subset of a separable Hilbert space. Then $\F(\Omega)$ is $1$-complemented in its bidual. 
\end{thmx}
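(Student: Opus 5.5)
We plan to combine Theorem~\ref{thm:main} with a Hilbert-space averaging argument, in the spirit of the finite-dimensional Euclidean case of \cite{CKK}. After translating, assume $0\in\Omega$ and fix a non-degenerate Gaussian measure $\Gamma$ on the separable Hilbert space $H$, with $\gamma$ its normalized restriction to $\Omega$. By Theorem~\ref{thm:main}, $\F(\Omega)\cong L^1(\gamma;H)/\ker S$, and $S^*f$ is the derivative $f'$. Since $H^*=H$ is separable, $L^1(\gamma;H)^*=L^\infty(\gamma;H)$, so $S^*$ is an isometric embedding of $\Lipz(\Omega)$ onto a weak$^*$-closed subspace $D\subset L^\infty(\gamma;H)$. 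A standard duality fact reduces the goal: $\F(\Omega)$ is $1$-complemented in its bidual if and only if there is a norm-one projection $P$ from $\F(\Omega)^{**}=\Lipz(\Omega)^*$ onto $\F(\Omega)$. Equivalently, we need a norm-one map $\Lipz(\Omega)^*\to\F(\Omega)$ that is the identity on $\F(\Omega)$.

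The construction will go as follows. Given $\phi\in\Lipz(\Omega)^*$ with $\|\phi\|\le1$, extend $\phi\circ (S^*)^{-1}$ by Hahn--Banach to a functional on $L^\infty(\gamma;H)$. This functional lives in $L^1(\gamma;H)^{**}$. We will pass to its "normal part" by a canonical weak$^*$-to-$L^1$ procedure, such as the Yosida--Hewitt decomposition or a conditional-expectation limit along finite-dimensional subspaces. For the direct construction we plan the latter. Pick an increasing sequence of finite-dimensional subspaces $E_n$ with orthogonal projections $Q_n$, and disintegrate $\Gamma$ along $E_n$. For Lipschitz $f$, the partial derivatives in directions of $E_n$, averaged over the fibres, are the derivatives of a Lipschitz function on the finite-dimensional convex set $Q_n\Omega$, after suitably convolving in the fibres. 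The Euclidean result of \cite{CKK} then yields norm-one projections $P_n$ on $\F(Q_n(\Omega))$. We will transport these to maps $R_n:\Lipz(\Omega)^*\to\F(\Omega)$ of norm at most $1$, with $R_n\mu\to\mu$ for $\mu\in\F(\Omega)$. The Hilbert structure enters because orthogonal projections are $1$-Lipschitz and commute with the Gaussian disintegration. Finally, take $P\phi$ to be a limit of $R_n\phi$ along a free ultrafilter. This limit is first taken weak$^*$ in $\F(\Omega)^{**}$; we then need to show it actually lies in $\F(\Omega)$.

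The main obstacle is exactly this last point: ensuring that the ultrafilter limit lands in $\F(\Omega)$ rather than merely in the bidual. We would first try to represent each $R_n\phi$ via Theorem~\ref{thm:main} as $S(\g_n)$, where $\g_n\in L^1(\gamma;H)$ has norm at most $1$ and forms a martingale with respect to the $\sigma$-algebras generated by $Q_n$. Boundedness in $L^1$ is not enough for convergence, so uniform integrability is needed. We would obtain it by choosing the $\g_n$ as conditional expectations, with respect to the $\sigma$-algebra generated by $Q_n$, of a single object: the absolutely continuous part of the extended functional. If that works, the martingale converges in $L^1(\gamma;H)$, and we set $P\phi=S(\lim\g_n)$. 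The norm bound $\|P\|\le1$ then follows from $\|S\|\le1$. The identity $P\mu=\mu$ on $\F(\Omega)$ follows from the weak$^*$-continuity of $S^*$.
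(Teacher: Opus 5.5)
You correctly isolate the crux, namely getting the limit back into $\F(\Omega)$. However, the resolution you propose fails for a structural reason.

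Your final recipe is $P\phi=S(\g_a)$, where $\g_a$ is the normal (absolutely continuous) part of a norm-preserving lift $\g^{**}\in L^1(\gamma;H)^{**}$ of $\phi$. The martingale adds nothing: conditional expectations of the single function $\g_a\in L^1(\gamma;H)$ converge back to $\g_a$, so the maps $R_n$ and the input from \cite{CKK} drop out. For this to define a map on $\F(\Omega)^{**}$ that fixes $\F(\Omega)$, you need $S\circ N$ (with $N$ the normal-part projection) to vanish on $\ker S^{**}$. It does not, because singular parts of lifts carry genuine information about elements of $\F(\Omega)$.

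Here is a concrete counterexample. Let $\dim H\ge2$, and pick $h\in H_\Gamma$ and $\varepsilon_0>0$ with $[0,h]+U(0,\varepsilon_0)\subset\Omega$. For $\varepsilon<\varepsilon_0$ let $\rho_\varepsilon$ be the $\gamma$-density of the probability $A\mapsto\Gamma(U(0,\varepsilon))^{-1}\int_0^1\Gamma\bigl(U(0,\varepsilon)\cap(A-th)\bigr)\,dt$, which exists by \ref{NT}.
\begin{itemize}
\item The field $\g_\varepsilon=\rho_\varepsilon h$ satisfies $\norm{\g_\varepsilon}_1=\norm h$.
\item Arguing as in Step~1 of the proof of Theorem~\ref{thm:main}, $\pair{S\g_\varepsilon}{f}$ is the $\Gamma$-average of $f(x+h)-f(x)$ over $U(0,\varepsilon)$. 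Hence $\norm{S\g_\varepsilon-\delta_\Omega(h)}\le2\varepsilon$.
\item $\g_\varepsilon$ vanishes off the tube $U(0,\varepsilon)+[0,1]h$, whose measure tends to $\Gamma([0,1]h)=0$. So every weak$^*$ cluster point $\g^{**}$ is purely singular.
\item At the same time $S^{**}\g^{**}=\delta_\Omega(h)$ and $\norm{\g^{**}}\le\norm{\delta_\Omega(h)}$.
\end{itemize}
Thus $\g^{**}$ is an admissible Hahn--Banach lift of $\delta_\Omega(h)$, and your recipe sends it to $0$. Equivalently, if $S\g=\delta_\Omega(h)$, then $\g-\g^{**}\in\ker S^{**}$ but $SN(\g-\g^{**})=\delta_\Omega(h)\ne0$. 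Weak$^*$-continuity of $S^*$ cannot repair this.

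A sanity check confirms that a new idea is needed. The map $\phi\mapsto S(\g_a)$ uses nothing Hilbertian, and for every finite-dimensional $X$ the normal-part projection on $L^1(\gamma;X)^{**}$ is contractive. The same recipe would therefore give norm-one projections for non-Euclidean three-dimensional spaces, contradicting Theorem~\ref{thm:Intro1}$(b)$.

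The missing idea is to keep the singular information and project it, rather than discard it. This is what the paper does for bounded $\Omega$, regarding $H$ as the dual of $Y=H$ so that $K=\overline\Omega$ is weak$^*$ compact.
\begin{itemize}
\item It pushes $L^1(\gamma;H)^{**}$ into vector measures via $R:L^1(\gamma;H)^{**}\to M(K,H)=C(K,H)^*$. Your $\g^{**}$ becomes the honest measure $A\mapsto h\,\mathcal L^1\{t\in[0,1]:th\in A\}$ on the segment.
\item It introduces the closed subspace $M_{\mathrm{rep}}(K;H)$ of measures whose action on derivatives of smooth cylindrical functions comes from $\F(K)$. This subspace is an $L^1(|\bmu|)$-module (Proposition~\ref{prop:dual-module}).
\item Via Hiai--Umegaki, membership in it is described by a measurable field of closed subspaces (Proposition~\ref{prop:tangent-fields}).
\item It projects by fibrewise orthogonal projections onto these point-dependent subspaces (Lemma~\ref{lem:fibrewise-orthogonal-projections}, Proposition~\ref{prop:glue-Pnu}). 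This step, not projection onto fixed coordinate subspaces $E_n$, is where the Hilbert structure is indispensable.
\item Well-definedness of $Q$ with $QS^{**}=qPR$ is exactly the inclusion $\ker S^{**}\subset\ker(qPR)$. It holds because $R$ maps $\ker S^{**}$ to measures $\bmu$ with $\theta_{\bmu}=0$ (Lemma~\ref{lem:dual-R-construction}$(b)$). This is precisely where $S\circ N$ breaks down.
\end{itemize}
Unbounded $\Omega$ additionally needs the gluing Theorem~\ref{thm:fromBallToTheWholeSpace} with the compatibility of Proposition~\ref{prop:compatible}, which your outline does not address.

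Two smaller points. First, \cite{CKK} provides norm-one projections only for the whole Euclidean space, not for the convex sets $Q_n\Omega$. Second, a Hahn--Banach choice of lift is not linear in $\phi$, so any lift-based recipe must in the end be shown to factor through $S^{**}$.
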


This is an extension of \cite[Theorem 1.1]{CKK}. This extension is twofold -- the result includes also infinite-dimensional Hilbert spaces and, moreover, the projection of norm one exists not only for the case of the whole space, but for any nonempty convex subset. The proof follows the pattern of \cite{CKK} with several modifications and simplifications. 
One of the key ingredients of this proof is Theorem~\ref{thm:main}. Further, in Section~\ref{sec:norming-smooth} we introduce a $1$-norming subspace of $\Lipz(\Omega)$ consisting of smooth functions which enables us to use approximation by smooth functions similarly as in the finite-dimensional case. In Section~\ref{sec:bounded} we prove (using a similar pattern as in \cite{CKK}) the result for bounded $\Omega$. It is the content of Proposition~\ref{prop:dual-complementability-transfer}. In Section~\ref{sec:gluing} we give a general gluing principle for metric spaces -- Theorem~\ref{thm:fromBallToTheWholeSpace}. Theorem~\ref{thm:complement} then follows immediately by combining this theorem with Proposition~\ref{prop:compatible}.

The key tool to prove the `only if' part of Theorem~\ref{thm:Intro1} is the following theorem.

\begin{thmx}\label{thm:bidual-complemented-implies-Hilbert}
Let $X$ be a Banach space and let $M\subset X$ have nonempty
interior. If $\F(M)$ is $C$-complemented in its bidual for some
$C\geq1$, then every finite-dimensional subspace of $X$ is $C$-complemented in $X$.
\end{thmx}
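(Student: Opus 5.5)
Given a projection $P\colon \F(M)^{**}\to\F(M)$ with $\|P\|\le C$ and a finite-dimensional subspace $E\subset X$, the plan is to build a projection $X\to E$ of norm at most $C$. The map $X\to\F(M)$ obtained by "differentiating" elements of $\F(M)^{**}$ will be the vehicle. Since complementation constants are stable under translation and dilation, we may assume $0$ is an interior point of $M$ and $B_X(0,r)\subset M$.

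\emph{Step 1: reduction to a local statement and a weak$^*$ cluster point.} For $x\in X$ and $t>0$ small, set
\[
\mu_t^x=\frac{\delta(tx)-\delta(0)}{t}\in\F(M),\qquad \|\mu_t^x\|\le\|x\|.
\]
Passing to an ultrafilter $\mathcal U$ on $t\to0^+$, define the weak$^*$ limit $\phi(x)=w^*\text{-}\lim_{\mathcal U}\mu_t^x\in\F(M)^{**}$. For a Lipschitz $f$, the value $\phi(x)(f)$ is an ultrafilter limit of difference quotients of $f$ at $0$ in direction $x$. It equals the directional derivative whenever that exists, and in general $|\phi(x)(f)|\le \Lip(f)\,\|x\|$. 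Then $x\mapsto\phi(x)$ is positively homogeneous and bounded, but not linear in general, because one-sided derivatives of Lipschitz functions are not additive.

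\emph{Step 2: averaging to restore linearity and locating $E$.} To fix linearity we would instead use derivatives "at a random point". Fix a smooth probability density on $E$ near $0$, say a Gaussian-type density $\rho$ on a small ball of $E$. Define $\phi(x)$ as the weak$^*$ limit of
\[
\int_E \frac{\delta(y+tx)-\delta(y)}{t}\,\rho(y)\,dy .
\]
For $x\in E$ this is a Bochner integral in $\F(M)$, and it converges in norm as $t\to0$. The reason is that, on $E$, Lipschitz functions are a.e. differentiable and $\rho$ is smooth, so by integration by parts the expression is close to $-\int_E \delta(y)\,\partial_x\rho(y)\,dy$. Hence $\phi(x)\in\F(M)$ for $x\in E$, and $\phi$ is linear on $E$. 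For general $x$, linearity of $\phi$ on $X$ would follow if we restrict to a $1$-norming class of functions that are Gâteaux differentiable. The plan is to invoke the smooth norming subspace from Section~\ref{sec:norming-smooth}, or Theorem~\ref{thm:main}, where $S^*f=f'$. This gives $\phi(x)(f)=\int\pair{f'(y)}{x}\rho(y)\,dy$ on the norming set, which is linear in $x$. The norming set determines elements of $\F(M)$, so $P\phi(x)\in\F(M)$ is identified through its pairing with smooth $f$.

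\emph{Step 3: returning to $E$.} Next we need a norm-one operator $Q\colon\F(M)\to E$ with $Q\circ\phi|_E=\mathrm{id}_E$. Take $Q(\mu)$ to be the element of $E$ whose pairing with each $e^*\in E^*$ is
\[
\mu\bigl(f_{e^*}\bigr),
\]
where $f_{e^*}=\tilde e^*\restriction M$ and $\tilde e^*$ is a Hahn--Banach extension of $e^*$ to $X$. For this to define a norm-one map into $E$ linearly, we should instead apply the barycenter map $\beta\colon\F(X)\to X$, which is linear of norm one and satisfies $\beta(\delta(x))=x$. Then
\[
\beta\circ\phi(x)=\int x\,\rho=x \quad\text{for } x\in E .
\]
The candidate projection is $\pi=\beta_E\circ P\circ\phi$, where $\beta_E$ denotes a norm-one map that is the identity on $E$. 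Here lies the obstacle: $\beta$ maps into $X$, not into $E$. The remedy is to restrict the construction so that $P\phi(x)$ lands in $\F(M\cap E)$, or rather to compose with $\beta$ and accept a range in $X$. The approach I would actually try replaces the target by smooth test functions $f=g\circ q$ with $g$ smooth on $E$ and $q$ ranging over norm-one extensions. Extracting an $E$-valued linear functional of norm at most $C$ this way will be the main obstacle. The key difficulty is showing that the $E$-valued object $x\mapsto\bigl(e^*\mapsto P\phi(x)(\tilde e^*)\bigr)$ does not depend on the extension, and that it is linear with norm at most $C\,\|x\|$. Given those, $\pi|_E=\mathrm{id}$ follows from $P\phi(x)=\phi(x)$ for $x\in E$, which holds by Step 2.
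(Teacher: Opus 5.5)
Your proposal has a genuine gap, exactly where you flag the ``main obstacle''. In addition, Step~2 does not yet produce what Step~3 needs. To avoid a clash with the paper's notation, write $V$ for the subspace you want to complement.

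\textbf{Step~2.} The projection $P$ acts on $\F(M)^{**}=\Lipz(M)^*$, so for every direction $x$ you need an element $\phi(x)\in\F(M)^{**}$ depending linearly on $x$. Prescribing $\phi(x)$ only on a $1$-norming class $Z$ of smooth functions does not achieve this. Elements of $\F(M)^{**}$ that agree on $Z$ may have different images under $P$, and Hahn--Banach lifts depend nonlinearly on $x$. Without such a restriction your averaged $\phi$ is nonlinear off $V$: for $f=\dist(\cdot,V)$ one gets $\phi(x)(f)=\phi(-x)(f)=\dist(x,V)$. Also, Theorem~\ref{thm:main} needs separability, which is not assumed here. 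The paper first reduces, by Lemma~\ref{lem:projection-constant-local}, to projections between finite-dimensional spaces $V\subset E$. It then builds a linear contraction $J:L^1(\gamma;E)\to\F(U_E)^{**}$ extending $S$ by convolving test functions with an approximate unit of the finite-dimensional space $E$, and taking an ultrafilter limit of the resulting linear maps $J_n$ (Proposition~\ref{prop:local-bidual-extension}). Composing with $P$ gives $H:L^1(\gamma;E)\to\F(X)$ with $\norm{H}\le C$ and $H|_{L^1(\gamma;V)}=S$.

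\textbf{Step~3 (the decisive gap).} The independence of the Hahn--Banach extension you hope for is, by Fact~\ref{fact:linearization-adjoint}, equivalent to $\beta(P\phi(x))\in V$, where $\beta$ is the barycenter map. This fails in general. For $x\notin V$ we have $\phi(x)\notin\F(M)$: it takes the value $\dist(x,V)$ on the $1$-Lipschitz functions $\min\{\dist(\cdot,V),1/n\}$, which tend to $0$ pointwise. Hence replacing $P$ by $P+T(\mathrm{Id}-P)$, with $T$ of rank one and small norm, gives another projection for which $\beta(P\phi(x))\notin V$. Composing $\beta$ with a linear projection $\pi:X\to V$ only gives the useless bound $C\norm{\pi}$.

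The missing idea is nonlinear and quantitative. The paper applies to $H(w\cdot x)$ the linearization $\linearization{\varphi}$ of a Lipschitz map $\varphi:X\to V$ with $(\varphi|_V)'=\mathrm{Id}_V$ on the set $A$ carrying the weight $w=\chi_A/\gamma(A)$ (Proposition~\ref{prop:projekce-konstrukce}). It controls $\linearization{\varphi}$ only on the relevant range. By the uniform regularity theorem (Lemma~\ref{lem:uniform-regularity}), $H(B_{L^2(\gamma;E)})\subset\F(V\cup K_{\varepsilon'})+\varepsilon' B_{\F(X)}$, with $K_{\varepsilon'}$ compact at distance $r>0$ from $V$. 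One then takes $\varphi=v\cdot(\psi\circ\pi)$, where:
\begin{itemize}
\item $v$ is a cut-off equal to $1$ on $V$ and vanishing on $K_{\varepsilon'}$;
\item $\psi:V\to V$ is a small-amplitude periodic sawtooth with $\norm{\psi}_{\Lip}\le1+\varepsilon$, $\norm{\psi}_\infty\le r$, and $\psi'=\mathrm{Id}_V$ on a set of $\gamma$-measure at least $\alpha$, with $\alpha$ independent of $r$ (Lemma~\ref{lem:periodic-densities}).
\end{itemize}
Then $\varphi$ is $(1+\varepsilon)$-Lipschitz on $V\cup K_{\varepsilon'}$. Its large global Lipschitz constant only multiplies the error term $\varepsilon'\norm{w}_2\le\varepsilon'\alpha^{-1/2}$, which is made small by choosing $\varepsilon'$ after $\alpha$ (Lemmas~\ref{lem:vodhad} and~\ref{lem:estimate}). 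None of these ingredients appears in your proposal, so as written it does not prove the theorem.
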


This theorem will be proved in Section~\ref{sec:Mrep-complemented-implies-Hilbert} below. We note that, unlike in the previous theorems, here we do not assume that $X$ is separable. The reason is that a finite-dimensional reduction works (see Lemma~\ref{lem:projection-constant-local}). One of the important tools is again Theorem~\ref{thm:main}.

Let us now show how we get our main result by combining the other theorems.

\begin{proof}[Proof of Theorem~\ref{thm:Intro1}]
$(a)$: The `if' part follows immediately from Theorem~\ref{thm:complement}. To prove the `only if' part assume that $\F(\Omega)$ is complemented in $\F(\Omega)^{**}$. Let $C\ge 1$ be the norm of a witnessing projection. By Theorem~\ref{thm:bidual-complemented-implies-Hilbert} each finite-dimensional subspace of $X$ is $C$-complemented in $X$.  The Lindenstrauss--Tzafriri characterization of Hilbert spaces \cite{LindenstraussTzafriri1971}, see also
\cite[Theorem 13.4.3]{AlbiacKalton}, implies that $X$ is isomorphic to a Hilbert space.

$(b)$: Let us start with the `if' part: If $X$ is isometric to a Hilbert space, we conclude by Theorem~\ref{thm:complement}. If $\dim X\le 2$, by the very recent result \cite[Corollary 4.3]{BGLMPS26b} we obtain that $\F(X)$ is $1$-complemented in its bidual, which proves the assertion for $\Omega=X$. The case of general $\Omega$ may be easily deduced by combining the approach of \cite{BGLMPS26b} with the method of proof of our Theorem~\ref{thm:complement}, as explained in Remarks~\ref{rem:dim-bounded} and~\ref{rem:twodim-compatible} below.

To prove the `only if' part assume that $\F(\Omega)$ is $1$-complemented in $\F(\Omega)^{**}$ and $\dim X\geq3$. Let $E\subset X$
be three-dimensional. By Theorem~\ref{thm:bidual-complemented-implies-Hilbert}
and restriction, any subspace of $E$ is $1$-complemented in $E$.
Kakutani's theorem \cite[Theorem~4]{Kakutani}, see also \cite[pp.~98--106]{Amir1986}, therefore makes $E$ Euclidean.
Every pair of vectors in $X$ lies in such an $E$, so the norm
of $X$ satisfies the parallelogram identity. Hence $X$ is a Hilbert space.  
\end{proof}

\begin{remark}
    (1) We point out that the `only if' parts in Theorem~\ref{thm:Intro1} do not require the separability assumption. The reason is that in Theorem~\ref{thm:bidual-complemented-implies-Hilbert} this assumption is not needed. However, in the `if' parts separability is strongly used, in particular due to the use of Theorem~\ref{thm:main}. We have not investigated the problem for nonseparable Hilbert spaces.

    (2) In the `only if' part of assertion $(b)$ of Theorem~\ref{thm:Intro1} it is enough to assume that $\F(\Omega)$ is $(1+\varepsilon)$-complemented in $\F(\Omega)^{**}$ for each $\varepsilon>0$. Indeed, given $E\subset X$ of dimension $3$, we get that each subspace is $1+\varepsilon$-complemented for each $\varepsilon>0$. By compactness we deduce that each subspace of $E$ is $1$-complemented and hence we may conclude as above.
\end{remark}

\section{Preliminaries}

All Banach spaces and scalar-valued functions in this paper are real. For a metric space $M$, we denote by $U_M(x,r)$ and $B_M(x,r)$ the open and closed balls with centre $x$ and radius $r>0$, omitting the subscript when the ambient space is clear. For a Banach space $X$, its open and closed unit balls are denoted by $U_X$ and $B_X$, respectively.

Recall that a linear subspace $W\subset X^*$ is called \emph{$1$-norming} for $X$ if
\[
 \norm{x}=\sup_{w\in B_W}|w(x)|,\qquad x\in X.
\]
It is a standard consequence of the bipolar theorem that $W$ is $1$-norming for $X$ if and only if $B_{X^*}$ is the weak$^*$ closure of $B_W=W\cap B_{X^*}$.

We use the standard notions of directional and G\^ateaux
differentiability; see \cite[Definitions~1.65--1.66]{HajekJohanis}. If $U\subset X$ is open, $f:U\to\mathbb R$, $x\in U$, and $h\in X$, we write
\[
 \partial_h f(x)=\lim_{t\to0}\frac{f(x+th)-f(x)}{t},
\]
whenever the limit exists. If $f$ is G\^ateaux differentiable at $x$, its derivative is denoted by $f'(x)\in X^*$, so that $\partial_h f(x)=f'(x)[h]$. In particular, every $x^*\in X^*$ is G\^ateaux differentiable at each $x\in X$, with $(x^*)'(x)=x^*$; equivalently, $\partial_h x^*(x)=x^*(h)$ for every $h\in X$. If $f$ is $L$-Lipschitz, then obviously $\norm{f'(x)}\le L$ at every point of G\^ateaux differentiability.

For a Banach space $Y$ and an open set $U\subset X$, we denote by $C^\infty(U;Y)$ the class of mappings $F:U\to Y$ whose
Fr\'echet derivatives of every order exist and are continuous in the corresponding operator norms; see \cite[Definitions~1.66 and~1.74]{HajekJohanis}. Here the Fr\'echet derivative $F'(x):X\to Y$ is the bounded
linear operator satisfying $\norm{F(x+h)-F(x)-F'(x)h}=o(\norm h)$ as $h\to0$, and higher derivatives are defined inductively. If $F$ is Fr\'echet differentiable at $x$, then it is also G\^ateaux differentiable at $x$, and the two derivatives coincide. Thus our notation is consistent.

\subsection{Lipschitz-free spaces}\label{subsec:lipschitz-free}

Let $(M,d,0)$ be a pointed metric space, that is, a metric space
with a distinguished point denoted by $0$. A map between pointed
metric spaces is called \emph{pointed} if it maps the distinguished
point of its domain to that of its target space.

For a Lipschitz map $f:M\to Y$, where $Y$ is a Banach space,
we denote its least Lipschitz constant by
\[
 \norm{f}_{\Lip}
 =\sup_{\substack{x,y\in M\\x\ne y}}
   \frac{\norm{f(x)-f(y)}}{d(x,y)}.
\]
This defines a seminorm on the vector space of Lipschitz maps
from $M$ to $Y$, and a norm on its subspace of pointed maps.
By $\Lipz(M)$ we denote the Banach space of pointed real-valued
Lipschitz functions on $M$, equipped with this norm.

For $x\in M$, denote by $\delta_M(x)\in\Lipz(M)^*$ the evaluation
functional given by $\pair{\delta_M(x)}f=f(x)$ for $f\in\Lipz(M)$. The
\emph{Lipschitz-free space} over $M$, denoted by $\F(M)$, is the
closed linear span of $\delta_M(M)$ in $\Lipz(M)^*$, equipped with
the inherited norm. Observe that $\delta_M:M\to\F(M)$ is a pointed
isometric embedding.

The space $\F(M)$ is characterized by the following
\emph{universal property}: whenever $Y$ is a Banach space and
$f:M\to Y$ is a pointed Lipschitz map, there is a unique bounded
linear operator $\linearization{f}:\F(M)\to Y$ such that
$\linearization{f}\circ\delta_M=f$.
Moreover, $\norm{\linearization{f}}=\norm{f}_{\Lip}$.
The operator $\linearization{f}$ is called the \emph{linearization}
of $f$.
\begin{center}
\begin{tikzcd}[column sep=large,row sep=large]
 M \arrow[r,"f"] \arrow[d,"\delta_M"']
   & Y \\
 \F(M) \arrow[ur,dashed,"\linearization{f}"']
   &
\end{tikzcd}
\end{center}
Applying the universal property with $Y=\R$, we obtain
the canonical isometric identification $\F(M)^*=\Lipz(M)$.
With respect to this duality, the weak-star topology on bounded
subsets of $\Lipz(M)$ coincides with the topology of pointwise convergence on $M$.

Further, whenever $0\in N\subset M$, we regard $\F(N)$ as the closed subspace of $\F(M)$ generated by $\delta_M(N)$; this identification is justified by McShane's extension theorem. By continuity of $\delta_M$, we have $\F(N)=\F(M)$ if $N$ is dense in $M$. In particular, $\F(M)$ is separable whenever $M$
is separable.

The facts recalled above are standard; for the convenience of the reader, we refer to \cite[Section~1]{cuth-doucha-w} and to the monograph of Baudier and Lancien \cite[Section~4.1]{BaudierLancien} for proofs and further background. Our notation follows the latter, with minor modifications. For later reference, we also record a few elementary observations together with their short proofs.

\begin{fact}\label{fact:linearization-adjoint}
Let $M$ be a pointed metric space and let $f:M\to Y$ be a pointed
Lipschitz map into a Banach space. Then
$\pair{\linearization{f}(\xi)}{y^*}
=\pair{\xi}{(y^*\circ f)|_N}$
whenever $0\in N\subset M$, $\xi\in\F(N)$ and $y^*\in Y^*$.
\end{fact}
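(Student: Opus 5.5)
The plan is to reduce the statement to the case $\xi=\delta_N(x)$ and then extend by linearity and continuity. Both sides of the identity are bounded linear functionals of $\xi\in\F(N)$, so this suffices.

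First I handle the left-hand side. The map $\xi\mapsto\pair{\linearization{f}(\xi)}{y^*}$ is linear and bounded on $\F(M)$, with norm at most $\norm{f}_{\Lip}\norm{y^*}$. Hence its restriction to the subspace $\F(N)\subset\F(M)$ is bounded as well.

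Next the right-hand side. The function $y^*\circ f$ is a pointed Lipschitz function on $M$, with $\norm{y^*\circ f}_{\Lip}\le\norm{y^*}\norm{f}_{\Lip}$. Its restriction $(y^*\circ f)|_N$ therefore lies in $\Lipz(N)=\F(N)^*$. Consequently $\xi\mapsto\pair{\xi}{(y^*\circ f)|_N}$ is a bounded linear functional on $\F(N)$.

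It remains to check the identity on the generators. Under the identification of $\F(N)$ with the closed span of $\delta_M(N)$ in $\F(M)$, the element $\delta_N(x)$ corresponds to $\delta_M(x)$ for $x\in N$. Then
\[
\pair{\linearization{f}(\delta_M(x))}{y^*}=y^*(f(x))=\pair{\delta_N(x)}{(y^*\circ f)|_N}.
\]

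The only point requiring care is this compatibility of the identification. The embedding $\F(N)\hookrightarrow\F(M)$ is the linearization of $\delta_M|_N$. It is isometric by McShane's extension theorem, and its adjoint is the restriction map $\Lipz(M)\to\Lipz(N)$. This is precisely what makes the pairing $\pair{\xi}{g|_N}$ equal to the pairing of $\xi$, viewed in $\F(M)$, with $g\in\Lipz(M)$.

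Finally, the two bounded linear functionals agree on $\delta_N(N)$. This set spans a dense subspace of $\F(N)$, so they coincide on all of $\F(N)$, which proves the fact.
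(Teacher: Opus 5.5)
Your proof is correct and follows the paper's own argument: you check the identity on $\delta_M(x)$ for $x\in N$, where both sides equal $y^*(f(x))$, and then extend by linearity and continuity. Your remarks on the boundedness of both functionals, and on why the identification $\F(N)\subset\F(M)$ is compatible with restricting Lipschitz functions to $N$, spell out what the paper leaves implicit.
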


\begin{proof}
For $\xi=\delta_M(x)$ with $x\in N$, both sides equal $y^*(f(x))$.
The assertion follows by linearity and continuity.
\end{proof}

The following standard result concerns weighting operators on
Lipschitz-free spaces; for the convenience of the reader, we refer e.g. to \cite[Lemma~2.3 and Proposition~2.4]{Aliaga-supports} for a proof.

\begin{lemma}\label{lem:free-weighting}
Let $(M,d,0)$ be a pointed metric space and let $g:M\to\R$ be
a bounded Lipschitz function with
$\supp g\subset\{x\in M:d(x,0)\le R\}$ for some $R\ge0$. Then there is a unique bounded linear operator
$W_g:\F(M)\to\F(M)$ satisfying
$W_g\delta_M(x)=g(x)\delta_M(x)$ for every $x\in M$.

Moreover, $\norm{W_g}\le\norm{g}_\infty+R\norm{g}_{\Lip}$,
$W_g^*f=gf$ for every $f\in\Lipz(M)$, and
$W_g(\F(M))\subset\F(\{0\}\cup\supp g)$.
\end{lemma}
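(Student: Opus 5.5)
The statement to prove is Lemma~\ref{lem:free-weighting}, on weighting operators. The plan is to build $W_g$ as the pre-adjoint of the multiplication operator $T:f\mapsto gf$ on $\Lipz(M)$. The first step is to check that $T$ maps $\Lipz(M)$ boundedly into itself. Since $gf$ vanishes at $0$, $T$ lands in pointed functions, and the task is a Lipschitz estimate for $gf$.

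Fix $x,y\in M$ and $f\in B_{\Lipz(M)}$. If both $x,y\notin\supp g$, then $gf$ vanishes at both points and there is nothing to check. Otherwise assume, by symmetry, that $d(x,0)\le R$; this is harmless because points outside $\supp g$ contribute zero, and the roles of $x$ and $y$ can be swapped. Write
\[
g(x)f(x)-g(y)f(y)=\bigl(g(x)-g(y)\bigr)f(x)+g(y)\bigl(f(x)-f(y)\bigr).
\]
The first term is bounded by $\norm{g}_{\Lip}d(x,y)\,R$, using $\abs{f(x)}\le d(x,0)\le R$. The second term is bounded by $\norm{g}_\infty d(x,y)$.

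The only delicate case is when $y\in\supp g$ but $x\notin\supp g$, with $y$ far from $0$. This cannot occur, since $\supp g$ lies in the ball of radius $R$. So whenever some point lies in the support, we choose that point as $x$, and the bound $\norm{T}\le\norm{g}_\infty+R\norm{g}_{\Lip}$ follows.

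The second step is weak$^*$ continuity of $T$. On bounded sets the weak$^*$ topology is pointwise convergence, and pointwise convergence $f_\alpha\to f$ clearly implies $gf_\alpha\to gf$ pointwise. By the Banach--Dieudonn\'e (Krein--Smulian) theorem, $T$ is therefore weak$^*$-weak$^*$ continuous. Hence $T=W^*$ for a bounded operator $W$ on the predual $\F(M)$, with $\norm{W}=\norm{T}$.

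We then check $\pair{W\delta(x)}{f}=g(x)f(x)$, so $W\delta(x)=g(x)\delta(x)$. Uniqueness of $W_g$ follows because $\delta(M)$ spans a dense subspace. Alternatively, one can avoid the duality argument entirely: define $W$ on $\linspan\delta(M)$ and use the estimate $\abs{\pair{W\mu}{f}}=\abs{\pair{\mu}{gf}}\le\norm{T}\norm{\mu}$ to extend by continuity. This route is simpler, and it also yields $W^*=T$ directly.

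Finally, for the range statement, $W\delta(x)$ is either $0$ or a multiple of $\delta(x)$ with $x\in\supp g$. So $W$ maps $\linspan\delta(M)$ into $\F(\{0\}\cup\supp g)$, and by density and continuity the same holds for all of $\F(M)$. The main obstacle is only the case analysis in the Lipschitz estimate, which is settled by choosing $x$ to be the point lying in the support.
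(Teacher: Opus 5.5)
Your proof is correct and follows essentially the same route as the source the paper cites for this lemma; the paper gives no proof of its own and refers to \cite[Lemma~2.3 and Proposition~2.4]{Aliaga-supports}. That route is the product estimate $\norm{gf}_{\Lip}\le(\norm{g}_\infty+R\norm{g}_{\Lip})\norm{f}_{\Lip}$, obtained by evaluating $f$ at whichever point lies in $\supp g$, followed by weak$^*$-continuity of $f\mapsto gf$ and passage to the pre-adjoint; your alternative of defining $W_g$ on $\linspan\delta_M(M)$ through $\pair{W_g\mu}{f}=\pair{\mu}{gf}$ and extending by continuity is equally valid and avoids the Banach--Dieudonn\'e theorem.
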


\subsection{Approximation by convolution}
\label{subsec:convolution}

Let $E=(\R^d,\norm{\cdot})$, where $d\ge1$ and $\norm{\cdot}$
is an arbitrary norm. All integrals below are taken with respect to the usual Lebesgue measure on $\R^d$.
Following \cite[Section~2]{ckk-mathematika} and
\cite[Section~2]{CKK}, we fix an \emph{approximate unit}
$(u_n)_{n\in\N}$ in $C_c^\infty(\R^d)$, given by
$u_n(x)=n^d\rho(nx)$, where $\rho\in C_c^\infty(\R^d)$ is
nonnegative, $\int_{\R^d}\rho(x)\,dx=1$, and
$\supp\rho\subset U_E$. Thus $\supp u_n\subset n^{-1}U_E$ and
$\int_{\R^d}u_n(x)\,dx=1$ for every $n\in\N$.

For a locally integrable function $f:E\to\R$, define
\[
 (f*u_n)(x)=\int_{\R^d}f(x-z)u_n(z)\,dz,
 \qquad x\in E,\ n\in\N.
\]
These integrals are well defined since $u_n$ is bounded
and compactly supported. Recall that convolution is
commutative, so $f*u_n=u_n*f$.
For finite-dimensional vector-valued functions, such as
the derivative $f'$, convolution is understood coordinatewise.

The following approximation properties are standard.
For the convenience of the reader, we  include a short sketch of the proof in the form needed here.

\begin{fact}\label{fact:convolution-approximation}
Let $E=(\R^d,\norm{\cdot})$, let $(u_n)$ be an approximate
unit as above, and let $f:E\to\R$ be Lipschitz.
For every $n\in\N$, we have $u_n*f\in C^\infty(E)$ and
\begin{equation}\label{eq:convolution-derivative}
 (u_n*f)'(x)=(u_n*f')(x),\qquad x\in E.
\end{equation}
Moreover,
\[
 \norm{u_n*f}_{\Lip}
 =\norm{u_n*f'}_\infty
 \le\norm f_{\Lip},
 \qquad
 \norm{u_n*f-f}_\infty
 \le n^{-1}\norm f_{\Lip}.
\]
\end{fact}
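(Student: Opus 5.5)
The plan is to establish the four assertions of Fact~\ref{fact:convolution-approximation} in turn, each by a short direct estimate. Throughout, $L=\norm f_{\Lip}$.

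\emph{Smoothness.} Since $u_n\in C_c^\infty(\R^d)$ and $f$ is continuous (hence locally integrable), differentiation under the integral sign in $(u_n*f)(x)=\int f(z)u_n(x-z)\,dz$ gives $u_n*f\in C^\infty(E)$. All derivatives fall on $u_n$, and the integrand is dominated locally uniformly in $x$ by $\sup_{|y-x|\le 1}|f(y)|\cdot\sup|\partial^\alpha u_n|$ on a compact set. Because $E$ is finite-dimensional, Fréchet smoothness does not depend on the chosen norm.

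\emph{Formula \eqref{eq:convolution-derivative}.} By Rademacher's theorem, $f$ is differentiable almost everywhere, so $f'$ is an a.e.\ defined, bounded, measurable map into $E^*$ with $\norm{f'(y)}\le L$. Fix $h\in E$ and write
\[
\frac{(u_n*f)(x+th)-(u_n*f)(x)}{t}=\int\frac{f(x+th-z)-f(x-z)}{t}\,u_n(z)\,dz .
\]
The difference quotients are bounded by $L\norm h$ and converge to $f'(x-z)[h]$ for a.e.\ $z$. Dominated convergence then yields $\partial_h(u_n*f)(x)=(u_n*f')(x)[h]$, where the convolution of $f'$ is taken coordinatewise. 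Since $u_n*f$ is $C^1$, this directional derivative is its derivative, which proves \eqref{eq:convolution-derivative}.

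\emph{The Lipschitz norm.} The mean value theorem along segments gives
\[
\norm{u_n*f}_{\Lip}=\sup_x\norm{(u_n*f)'(x)}=\norm{u_n*f'}_\infty .
\]
The inequality $\ge$ holds because the derivative norm is a limit of difference quotients. For the bound by $L$, we estimate
\[
\abs{(u_n*f)(x)-(u_n*f)(y)}\le\int\abs{f(x-z)-f(y-z)}u_n(z)\,dz\le L\norm{x-y},
\]
using $u_n\ge0$ and $\int u_n=1$.

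\emph{Uniform approximation.} Again using $u_n\ge0$ and $\int u_n=1$, we get $\abs{(u_n*f)(x)-f(x)}\le\int\abs{f(x-z)-f(x)}u_n(z)\,dz\le L\sup_{z\in\supp u_n}\norm z\le L/n$, because $\supp u_n\subset n^{-1}U_E$.

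\emph{Main obstacle.} The only delicate point is justifying \eqref{eq:convolution-derivative}. It requires Rademacher's theorem to make sense of $f'$ a.e., together with dominated convergence; everything else is elementary.
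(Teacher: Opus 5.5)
Your proposal is correct and follows essentially the same route as the paper: smoothness by differentiation under the integral sign, formula \eqref{eq:convolution-derivative} via Rademacher's theorem and dominated convergence of the bounded difference quotients, the identity $\norm{u_n*f}_{\Lip}=\norm{u_n*f'}_\infty$ via the mean value theorem on the convex space $E$, and the uniform estimate from $\supp u_n\subset n^{-1}U_E$. The only cosmetic difference is that you bound $\norm{u_n*f}_{\Lip}\le\norm f_{\Lip}$ directly from increments of $f$, whereas the paper bounds $\norm{u_n*f'}_\infty$ using $\norm{f'}_\infty\le\norm f_{\Lip}$; both are equally immediate.
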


\begin{proof}
Fix $n\in\N$. The smoothness of $u_n*f$ follows by
standard differentiation under the integral sign;
see, e.g., \cite[Corollary~1.91]{HajekJohanis}.

By Rademacher's theorem, $f'$ exists almost everywhere
and satisfies $\norm{f'(y)}\le\norm f_{\Lip}$ wherever
it exists. We set $f'=0$ at the remaining points.
Fix $x,h\in E$. By dominated convergence,
\[
 \begin{aligned}
 (u_n*f)'(x)[h]
 &=\lim_{\tau\to0}\int_E u_n(z)
   \frac{f(x+\tau h-z)-f(x-z)}{\tau}\,dz\\
 &=\int_E u_n(z)f'(x-z)[h]\,dz
 =(u_n*f')(x)[h].
 \end{aligned}
\]
Indeed, the difference quotients converge for almost
every $z$ and are bounded in absolute value by
$\norm f_{\Lip}\norm h$.
Since $h$ was arbitrary, this proves
\eqref{eq:convolution-derivative}.

Since $u_n*f$ is smooth and $E$ is convex, we have
\[
 \norm{u_n*f}_{\Lip}
 =\sup_{x\in E}\norm{(u_n*f)'(x)}
 =\norm{u_n*f'}_\infty
 \le\norm f_{\Lip},
\]
where the last inequality follows from
$\norm{f'}_\infty\le\norm f_{\Lip}$, $u_n\ge0$,
and $\int_Eu_n=1$.
Finally, for every $x\in E$,
\[
 \begin{aligned}
 |(u_n*f)(x)-f(x)|
 &\le\int_E u_n(z)|f(x-z)-f(x)|\,dz\\
 &\le\norm f_{\Lip}\int_E u_n(z)\norm z\,dz
 \le n^{-1}\norm f_{\Lip},
 \end{aligned}
\]
because $\supp u_n\subset n^{-1}U_E$.
Taking the supremum over $x\in E$ completes the proof.
\end{proof}

\subsection{Subspaces and quotients}

In this section we fix terminology concerning quotient operators and recall the standard duality of subspaces and quotients.  

Let $X,Y$ be normed spaces and let $T:X\to Y$ be a bounded linear operator. If $T$ preserves the norm (i.e., $\norm{Tx}=\norm{x}$ for each $x\in X$), $T$ is called an \emph{isometric embedding}.
Further, $T$ is called an \emph{isometric quotient} if $T$ maps the open unit ball in $X$ onto the open unit ball in $Y$. This is easily seen to be equivalent to the fact that the induced operator
$X/\ker T\to Y$ is a surjective isometry. We emphasize that we do not assume that both $X$ and $Y$ are Banach spaces. This additional generality will be useful later and should be kept in mind.

\begin{proposition}\label{prop:subspaces-quotients}
    Let $X,Y$ be normed spaces and let $T:X\to Y$ be a bounded linear operator. Then the following assertions are valid.
    \begin{enumerate}[label=(\roman*)]
        \item\label{item:isometric-embedding} $T$ is an isometric embedding if and only if $T^*$ is an isometric quotient.
        \item\label{item:quotient-dual-embedding} If $T$ is an isometric quotient, then $T^*$ is an isometric embedding.
        \item\label{item:complete-quotient} If $X,Y$ are additionally complete, then $T$ is an isometric quotient if and only if $T^*$ is an isometric embedding.
    \end{enumerate}
\end{proposition}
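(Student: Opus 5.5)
The plan is to prove the three assertions separately. Hahn--Banach drives (i) and (ii), and the open mapping argument behind Banach's theorem (with completeness) drives the converse in (iii). Throughout, $X^*,Y^*$ are Banach spaces even when $X,Y$ are only normed, and $T^*:Y^*\to X^*$ is given by $T^*y^*=y^*\circ T$.

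For (i), suppose first that $T$ is an isometric embedding. Then $\norm{T^*}\le\norm{T}=1$, so $T^*$ maps the open unit ball of $Y^*$ into that of $X^*$. Conversely, let $x^*\in X^*$ with $\norm{x^*}<1$. Define $\varphi$ on $T(X)$ by $\varphi(Tx)=x^*(x)$. This is well defined and has norm $\norm{x^*}$, because $T$ is injective and isometric. Extend $\varphi$ by Hahn--Banach to $y^*\in Y^*$ with $\norm{y^*}=\norm{x^*}<1$; then $T^*y^*=x^*$. Hence $T^*$ maps the open unit ball onto the open unit ball.

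For the reverse direction of (i), suppose $T^*$ is an isometric quotient. For $x\in X$, choose $x^*$ with $\norm{x^*}\le1$ and $x^*(x)=\norm x$. By scaling the open-ball surjectivity, for every $t<1$ we get $y^*$ with $\norm{y^*}<1$ and $T^*y^*=tx^*$. This gives $t\norm x=y^*(Tx)\le\norm{Tx}$, so $\norm{Tx}\ge\norm x$. Also $\norm{T}=\norm{T^*}\le1$. Together these show $T$ is isometric.

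For (ii), let $T$ be an isometric quotient. Then $\norm{T}\le1$, so $\norm{T^*y^*}\le\norm{y^*}$. Given $y^*$ and $\varepsilon>0$, choose $y$ with $\norm y<1$ and $y^*(y)>\norm{y^*}-\varepsilon$. Then $y=Tx$ for some $x$ with $\norm x<1$, and $(T^*y^*)(x)=y^*(y)$. This yields $\norm{T^*y^*}\ge\norm{y^*}-\varepsilon$, so $T^*$ is isometric.

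For (iii), one direction is (ii). For the other, assume $X,Y$ are Banach spaces and $T^*$ is an isometric embedding.
\begin{enumerate}
\item First show $\overline{T(U_X)}\supset U_Y$. If some $y$ with $\norm y<1$ lies outside the closed convex symmetric set $\overline{T(U_X)}$, Hahn--Banach separation gives $y^*$ with $y^*(y)>\sup_{\norm x<1}y^*(Tx)=\norm{T^*y^*}=\norm{y^*}$. This contradicts $y^*(y)\le\norm{y^*}\norm y<\norm{y^*}$.
\item Then run the standard successive-approximation step from the open mapping theorem. Given $\norm y<r<1$, build $x_n$ with $\sum\norm{x_n}<1$ and $T(\sum x_n)=y$ (first absorbing the margin between $\norm y$ and $r$). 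Completeness of $X$ makes the series converge. Hence $T(U_X)\supset U_Y$.
\item The inclusion $T(U_X)\subset U_Y$ follows from $\norm T=\norm{T^*}\le1$.
\end{enumerate}

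The main obstacle is step 2: obtaining exactly the open ball, not merely a dense or slightly smaller image. The fix is to start the iteration with slack $r<1$ and rescale.
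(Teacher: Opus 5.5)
Your proof is correct and follows the paper's route. You use Hahn--Banach extension for the `only if' part of (i) and the definition of the dual norm for (ii). For (iii) you give exactly the separation-plus-successive-approximation argument behind Rudin's Theorem~4.13, (a)$\Rightarrow$(c) with $\delta=1$, which the paper simply cites.

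The only difference is in the `if' part of (i): you argue directly with a norming functional $x^*$ and preimages of $tx^*$, whereas the paper applies (ii) to $T^*$ to see that $T^{**}$ is an isometric embedding and then restricts it to $X$.
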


\begin{proof}
    This proposition is completely standard. We sketch the proof for the sake of completeness. The `only if' part of \ref{item:isometric-embedding} is an easy consequence of Hahn-Banach extension theorem. Assertion \ref{item:quotient-dual-embedding} follows easily from the definition of the dual norm. 
    
    Let us prove the `if' part of \ref{item:isometric-embedding}: Assume $T^*$ is an isometric quotient. By \ref{item:quotient-dual-embedding} we know that $T^{**}$ is an isometric embedding. Hence $T$, being the restriction of $T^{**}$ to $X$ is also an isometric embedding.

    \ref{item:complete-quotient}: The `only if' part follows from $(ii)$ and does not require completeness. The `if' part follows from \cite[Theorem 4.13]{Rudin}, implication $(a)\implies(c)$, applied to $\delta=1$.
\end{proof}

\subsection{Gaussian measures}
In the study of Lipschitz-free spaces over subsets of $\R^d$ in \cite{ckk-mathematika,CKK} an important role was played by the Lebesgue measure on $\R^d$. On infinite-dimensional spaces there is no canonical measure of this kind. However, Gaussian measures may be used instead.
We recall briefly the relevant terminology and properties.  For $m\in\R$ and
$\sigma>0$, the Gaussian probability $N(m,\sigma^2)$ on $\R$ is the
probability measure with density
\[
 dN(m,\sigma^2)(t)
 =\frac{1}{\sqrt{2\pi\sigma^2}}
  \exp\!\left(-\frac{(t-m)^2}{2\sigma^2}\right)\,dt.
\]
We also put $N(m,0)=\delta_m$, the Dirac measure supported at $m$. 

Let $X$ be a real separable Banach space.  A Borel probability
$\Gamma$ on $X$ is called \emph{Gaussian} if, for every $x^*\in X^*$,
there are $m_{x^*}\in\R$ and $\sigma_{x^*}\geq0$ such that
\[
 x^*_\#\Gamma=N(m_{x^*},\sigma_{x^*}^2);
\]
see \cite[Definition~6.17]{BenyaminiLindenstrauss}.  It is called
\emph{nondegenerate} if $\sigma_{x^*}>0$ whenever $x^*\ne0$.

\begin{fact}
    Any nontrivial separable Banach space admits a nondegenerate Gaussian probability measure.
\end{fact}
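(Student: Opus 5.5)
The plan is to build the measure explicitly as the law of a random series $\sum_n \lambda_n \xi_n x_n$. Here $(\xi_n)$ are independent standard normal random variables on a probability space $(\Omega_0,\mathcal A,P)$. The vectors $x_n\in X$ are chosen with $\norm{x_n}=1$ and dense linear span; they exist because $X$ is nontrivial and separable. The weights are $\lambda_n=2^{-n}$. We then set $\Gamma$ to be the distribution of
\[
 Z=\sum_{n=1}^\infty \lambda_n \xi_n x_n .
\]

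First we check that the series converges almost surely, and indeed absolutely. We have $E\sum_n\lambda_n\abs{\xi_n}\norm{x_n}=\sqrt{2/\pi}\sum_n 2^{-n}<\infty$. Hence $\sum_n \lambda_n\abs{\xi_n}<\infty$ almost surely, so $Z$ is a well-defined random vector in $X$. It is measurable as an a.s. limit of measurable partial sums, and since $X$ is separable, Borel and strong measurability agree. Thus $\Gamma=Z_\#P$ is a Borel probability on $X$.

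Next we verify that $\Gamma$ is Gaussian. Fix $x^*\in X^*$. Then $x^*(Z)=\sum_n \lambda_n x^*(x_n)\xi_n$ almost surely, by continuity of $x^*$. This is an a.s. convergent series of independent centred normals with $\sum_n\lambda_n^2 x^*(x_n)^2\le\norm{x^*}^2\sum_n 4^{-n}<\infty$. Its partial sums are $N(0,s_k^2)$, where $s_k^2=\sum_{n\le k}\lambda_n^2x^*(x_n)^2$. The characteristic functions $\exp(-s_k^2t^2/2)$ converge to $\exp(-\sigma^2t^2/2)$, where $\sigma^2=\sum_n\lambda_n^2x^*(x_n)^2$. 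Since a.s. convergence implies convergence in law, we get $x^*_\#\Gamma=N(0,\sigma_{x^*}^2)$ with $\sigma_{x^*}^2=\sigma^2$. This includes the degenerate case $\sigma=0$, which is $\delta_0$.

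Finally, nondegeneracy. If $\sigma_{x^*}=0$, then $x^*(x_n)=0$ for all $n$, because every $\lambda_n>0$. Since the $x_n$ span a dense subspace, $x^*=0$. This step, the choice of $\lambda_n$ and $x_n$ ensuring both convergence and nondegeneracy, is the only real point; everything else is routine. A possible alternative would be to cite the existence of an injective Hilbert–Schmidt-type operator $T:\ell_2\to X$ and push forward the standard cylindrical Gaussian measure. The explicit series above is more self-contained, so we use it.
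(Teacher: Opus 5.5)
Your proof is correct, and it takes a more direct route than the paper. The paper argues by reduction, relying on citations at each step. It cites Benyamini--Lindenstrauss for a nondegenerate Gaussian measure $\mu$ on $\ell_2$, built from an injective positive trace-class covariance operator. It then chooses a bounded operator $T:\ell_2\to X$ with dense range and cites the fact that $T_\#\mu$ is again a nondegenerate Gaussian measure.

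You instead define the measure in $X$ itself as the law of $\sum_n 2^{-n}\xi_n x_n$ and verify each step by hand:
\begin{itemize}
\item The geometric weights make the series converge absolutely almost surely, so no It\^o--Nisio or three-series type argument is needed.
\item Gaussianity of $x^*_\#\Gamma$ reduces to the stability of centred normal laws under independent sums and limits in law.
\item Nondegeneracy comes from the dense span of $(x_n)$, which plays the role that the dense range of $T$ plays in the paper.
\end{itemize}
Both constructions rest on the same mechanism: a Gaussian measure on $X$ as the image of i.i.d.\ standard normals under a summable linear map with dense range. Yours is self-contained and elementary. The paper's is shorter and places $\Gamma$ in the covariance-operator framework of the references it uses later. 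Since the properties in Proposition~\ref{prop:gaussian} are derived for an arbitrary nondegenerate Gaussian measure, either construction serves the rest of the paper equally well.

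One correction to your closing remark about the pushforward alternative. What gives nondegeneracy there is that $T$ has dense range (equivalently, $T^*$ is injective), not that $T$ is injective. Also, the paper avoids any radonifying condition on $T$ by pushing forward a genuine Gaussian measure on $\ell_2$ rather than the cylindrical one.
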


\begin{proof}
     For a separable Hilbert space this follows, for example, from \cite[Theorem~6.16 and Proposition~6.18 together with Remark (i) below Proposition~6.18]{BenyaminiLindenstrauss}.
     Indeed, the quoted results show how to construct such a measure starting from any injective positive trace-class operator on the Hilbert space. Further, let $X$ be a nontrivial separable Banach space. 
     Then it is easy to find a bounded linear operator $T:\ell^2\to X$ with dense range (see, e.g., \cite[p. 140]{BenyaminiLindenstrauss}). Let $\mu$ be a non-degenerate Gaussian probability on $\ell^2$ (it exists by the above). Then its pushforward $T_\#\mu$ is a nondegenerate Gaussian probability on $X$ by \cite[Proposition 6.20]{BenyaminiLindenstrauss}.
\end{proof}

\begin{proposition}\label{prop:gaussian}
    Let $X$ be a nontrivial separable Banach space and let $\Gamma$ be a non-degenerate Gaussian probability on $X$. Then the following assertions are valid.
    \begin{itemize}
\nameditem{FS}{FS} $\Gamma$ has full support, i.e., $\Gamma(U)>0$ for each nonempty open set $U\subset X$.
\nameditem{D}{D}
For every Lipschitz function $f:X\to\R$, there is a Borel
set $N_f\subset X$ with $\Gamma(N_f)=0$ such that $f$ is G\^ateaux differentiable at every
point of $X\setminus N_f$.

\nameditem{NT}{NT}
There is a norm-dense linear subspace $H_\Gamma\subset X$ such that, for every $h\in H_\Gamma$, every $t\in\mathbb R$, and every $\Gamma$-measurable set $N\subset X$
\begin{equation*}
   \Gamma(N)=0
   \quad\Longrightarrow\quad
   \Gamma(N-th)=0.
\end{equation*}

\nameditem{GIP}{GIP}
For the same space $H_\Gamma$, there is a linear map
\[
 h\longmapsto b_h\quad\text{from }H_\Gamma\text{ into }L^1(\Gamma)
\]
such that, for every $h\in H_\Gamma$, every Lipschitz $f:X\to\R$, and every
bounded Lipschitz $\psi:X\to\R$ with bounded support,
\begin{equation*}
 \int_X\partial_h f(x)\cdot \psi(x)\,d\Gamma(x)
 =\int_X f(x)\cdot\bigl(b_h(x)\psi(x)-\partial_h\psi(x)\bigr)
   \,d\Gamma(x).
\end{equation*}
\end{itemize}
\end{proposition}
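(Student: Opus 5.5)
The four assertions are standard facts about Gaussian measures; the plan is to derive each from the Cameron--Martin theory in \cite[Chapter~6]{BenyaminiLindenstrauss}. We take $H_\Gamma$ to be the Cameron--Martin space of $\Gamma$. After a translation by the mean, which exists as a Bochner integral by Fernique's theorem, we may assume $\Gamma$ is centred; the translation only shifts $b_h$ by a constant.

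\textbf{Step (FS).} By nondegeneracy, the support of $\Gamma$ is a closed set $F$ with $\Gamma(F)=1$. If $F\ne X$, we pick $x_0\notin F$. Since $F$ is convex and closed (it is the closure of $H_\Gamma$ plus the mean), Hahn--Banach gives $x^*\ne0$ with $x^*\le c<x^*(x_0)$ on $F$. Then $x^*_\#\Gamma$ is supported in $(-\infty,c]$, so it cannot be $N(m,\sigma^2)$ with $\sigma>0$, a contradiction. To avoid appealing to convexity of the support, it is cleaner to use the standard fact that $\supp\Gamma=\overline{H_\Gamma}$ together with density of $H_\Gamma$. Density itself follows by duality: if $x^*$ vanishes on $H_\Gamma$, then $\sigma_{x^*}^2=\|R x^*\|_{H_\Gamma}^2=x^*(Rx^*)=0$, so $x^*=0$, where $R$ is the covariance operator.

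\textbf{Steps (NT) and (GIP).} The Cameron--Martin theorem states that for $h\in H_\Gamma$ the translate $\Gamma(\cdot-th)$ is equivalent to $\Gamma$, with density $\exp(t\hat h-\tfrac{t^2}{2}\|h\|_H^2)$. Here $\hat h\in L^2(\Gamma)$ is the measurable linear functional associated with $h$, which equals $x^*$ when $h=Rx^*$. This gives (NT) directly. For (GIP) we set $b_h=\hat h$, which is linear in $h$ and lies in $L^1$ because it is Gaussian.

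The identity comes from differentiating $t\mapsto\int f(x+th)\psi(x+th)\,d\Gamma(x)=\int f\psi\, e^{t\hat h-t^2\|h\|^2/2}\,d\Gamma$ at $t=0$. On the right we differentiate under the integral using the integrable bound $|f\psi|\,|\hat h|e^{|t||\hat h|}$; this is fine because $f\psi$ is bounded, as $\psi$ has bounded support. On the left the difference quotients of $f\psi$ are bounded by a constant, since $f\psi$ is Lipschitz, and converge almost everywhere. Here we need $\partial_h f$ to exist $\Gamma$-a.e., which follows from Fubini along the lines $x+\R h$ combined with the one-dimensional Rademacher theorem and the quasi-invariance (NT). Using the product rule $\partial_h(f\psi)=\partial_hf\,\psi+f\,\partial_h\psi$ then yields $\int\partial_hf\,\psi=\int f(b_h\psi-\partial_h\psi)$.

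\textbf{Step (D), the main obstacle.} This is the Aronszajn--Phelps--Mankiewicz theorem: Lipschitz functions on a separable Banach space are G\^ateaux differentiable outside a Gaussian-null set, and in fact outside an Aronszajn-null set, with every Aronszajn-null set Gaussian-null. We would cite \cite[Theorem~6.42 and Proposition~6.27]{BenyaminiLindenstrauss} (Phelps' theorem for Gaussian null sets) rather than reprove it.

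The only point needing care is to obtain a \emph{Borel} exceptional set. The set of points of nondifferentiability is Borel: G\^ateaux differentiability at $x$ amounts to existence of $\partial_{h_k}f(x)$ for a dense sequence $(h_k)$ together with linearity in $h$ and a Lipschitz bound, and each of these is a countable Borel condition. We then take $N_f$ to be this set.
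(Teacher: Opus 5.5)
Your proposal is correct and follows essentially the same route as the paper: $H_\Gamma$ is the Cameron--Martin space and $b_h=\widehat h$. Property (D) is Phelps' theorem and (NT) is the Cameron--Martin theorem, while (FS) and the density of $H_\Gamma$ rest on identifying the support of $\Gamma$ with a translate of $\overline{H_\Gamma}$. The only real difference is in (GIP): you reprove the integration-by-parts formula $\int_X\partial_h(f\psi)\,d\Gamma=\int_X f\psi\,\widehat h\,d\Gamma$ by differentiating the Cameron--Martin density, and you get a.e.\ existence of $\partial_h f$ from Fubini, the one-dimensional Rademacher theorem and quasi-invariance; the paper instead cites \cite[Theorem~5.1.8]{Bogachev} for that formula and uses (D) for a.e.\ differentiability.
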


\begin{proof}
\ref{FS}: By \cite[Theorem~7.1.7]{BogachevMeasureTheory} we deduce that $\Gamma$ is a Radon measure.  Thus we may use  \cite[Theorem~3.6.1]{Bogachev} to show that the support of $\Gamma$ is a closed affine subspace of $X$. It now easily follows that $\Gamma$ has full support if and only if it is nondegenerate.

\ref{D}: This is the statement of Phelps' differentiability theorem, see
\cite[Theorem~6]{Phelps}, or \cite[Proposition~6.25 and Theorem~6.42]{BenyaminiLindenstrauss}.

\ref{NT}: Let $H_\Gamma$ be the Cameron--Martin space of $\Gamma$; see
\cite[Notation on p.~44]{Bogachev}.  If $h\in X$, write $\Gamma_h$ for the translate
of $\Gamma$ by $h$.  The Cameron--Martin theorem  \cite[Theorem~2.4.5]{Bogachev} says that
$\Gamma_h$ and $\Gamma$ are mutually absolutely continuous for $h\in H_\Gamma$ and mutually
singular for $h\notin H_\Gamma$.  The space $H_\Gamma$ is linear by definition, and it is
norm dense in $X$ because $\Gamma$ has full support (as by
\cite[Theorem~3.6.1]{Bogachev} the support of $\Gamma$ is a translate of $\overline{H_\Gamma}$). Since $H_\Gamma$ is a linear space, the implication from \ref{NT} follows.

\ref{GIP}: Since $\Gamma$ is a Gaussian measure on $X$, clearly $X^*\subset L^2(\Gamma)$.
Following \cite[p.~44]{Bogachev}, we denote by $X_\Gamma^*$ the closure in $L^2(\Gamma)$ of
\[
 \mathcal{L}_\Gamma:=\left\{x^*-\int_Xx^*\,d\Gamma:x^*\in X^*\right\}.
\]
Being a closed subspace of a Hilbert space, it is again a Hilbert space, with the inherited inner product. For $g\in X_\Gamma^*$ define a linear functional $R_\Gamma g$ on $X^*$
by
\[
 (R_\Gamma g)(x^*)
 =\int_X\left(x^*(x)-\int_Xx^*\,d\Gamma\right)g(x)\,d\Gamma(x),\qquad x^*\in X^*.
\]
The operator $R_\Gamma$ is one-to-one.  Indeed, suppose that
$g\in X_\Gamma^*$ and $R_\Gamma g=0$. Then $g\perp \mathcal{L}_\Gamma$  in the Hilbert space $X_\Gamma^*$. Since $\mathcal{L}_\Gamma$ is dense in $X_\Gamma^*$, we deduce $g=0$.

By \cite[Lemma~2.4.1]{Bogachev}, a vector $h\in X$ belongs to
$H_\Gamma$ if and only if there is $g\in X_\Gamma^*$ such that
$R_\Gamma g=h$, that is,
\[
x^*(h) = \int_X\left(x^*(x)-\int_Xx^*\,d\Gamma\right)g(x)\,d\Gamma(x),\qquad x^*\in X^*.
\]
Such a $g$ is unique due to injectivity of $R_\Gamma$.  Following \cite[Section~2.4]{Bogachev}, we denote this unique
element by $\widehat h$.  The map $h\mapsto\widehat h$ is linear.  Put
$b_h=\widehat h$ for $h\in H_\Gamma$; then
$b_h\in L^2(\Gamma)\subset L^1(\Gamma)$.

Let $f:X\to\R$ be Lipschitz and let $\psi:X\to\R$ be bounded and
Lipschitz with bounded support.  Put $g=f\psi$.  Then $g$ has bounded
support and is bounded.  It is also globally Lipschitz.  Indeed, choose
$r>0$ such that $\supp\psi\subset B(0,r)$.  The product $f\psi$ is
Lipschitz on $B(0,2r)$, and it vanishes outside $B(0,r)$.  If
$x\in B(0,r)$ and $y\notin B(0,2r)$, then
$|g(x)-g(y)|\leq\norm g_\infty r^{-1}\norm{x-y}$, which proves the
remaining case.

Fix $h\in H_\Gamma$.  The function $g$ satisfies the assumptions of
\cite[Theorem~5.1.8]{Bogachev}: it is measurable; for every $x\in X$,
the function $t\mapsto g(x+th)$ is Lipschitz and hence absolutely
continuous; $\partial_hg$ is bounded wherever it exists and therefore integrable;
and $g\widehat h\in L^1(\Gamma)$ because $g$ is bounded and
$\widehat h\in L^2(\Gamma)\subset L^1(\Gamma)$.  Consequently, by \cite[Theorem~5.1.8]{Bogachev} we obtain
\[
 \int_X\partial_hg\,d\Gamma=\int_Xg\widehat h\,d\Gamma.
\]
Choose a Borel $\Gamma$-null set outside which both $f$ and $\psi$ are
G\^ateaux differentiable.  At every point outside this set, the product
rule gives $\partial_hg=\partial_hf\,\psi+f \partial_h\psi$.  Hence
\[
 \int_X\bigl(\partial_hf\,\psi+f \partial_h\psi\bigr)\,d\Gamma
 =\int_Xf\psi\widehat h\,d\Gamma,
\]
and therefore
\[
 \int_X\partial_hf\,\psi\,d\Gamma
 =\int_Xf\bigl(\psi b_h-\partial_h\psi\bigr)\,d\Gamma.
\]
This completes the proof.
\end{proof}

\begin{remark}\label{rem:gauss}
  (1) In the sequel we will use only the four properties of non-degenerate Gaussian measures collected in Proposition~\ref{prop:gaussian}. Hence, in all results in this paper which deal with Gaussian measures, the assumption that $\Gamma$ is a non-degenerate Gaussian measure may be replaced by the assumption that $\Gamma$ is a Borel probability satisfying the four properties.

(2) We point out that it follows from property \ref{D} that $\Gamma(\partial \Omega)=0$ whenever $\Omega$ is an open convex set. Indeed, if $\Omega\subset X$ is open and convex, the function $f(x)=\dist(x,\Omega)$ is $1$-Lipschitz and each boundary point of $\Omega$ is easily seen to be a point of G\^ateaux non-differentiability. So, by (D) we deduce that it is $\Gamma$-null. Therefore, we may identify restrictions $\Gamma|_\Omega$ and $\Gamma|_{\overline{\Omega}}$ in the obvious way. We will do it in the sequel. 

  (3) Note that if $X$ has finite dimension, we may consider also the Lebesgue measure. It is only $\sigma$-finite (which causes no difficulty at least on bounded domains), but it satisfies the four properties. Indeed, it has full support and property \ref{D} follows from the Rademacher theorem.
  Property \ref{NT} with $H_\Gamma=X$ follows from the translation invariance of Lebesgue measure. Finally, property \ref{GIP} with $b_h$ being constant zero is the standard integration by parts for Lipschitz functions. 
\end{remark}

\subsection{Bochner integration}

We use the standard terminology from \cite[Chapter II]{DiestelUhl} or \cite[Section 2.3]{ryan}. We work only with finite measures. 
In particular, if $(\Omega,\Sigma,\mu)$ is a finite measure space and $X$ is a Banach space, by $L^1(\mu;X)$ we denote the respective Lebesgue-Bochner space (see \cite[pp. 49--50]{DiestelUhl}
or \cite[p. 29]{ryan}). If $X^*$ has the Radon-Nikod\'ym property, the dual $(L^1(\mu;X))^*$ is canonically isometric to $L^\infty(\mu;X^*)$ (see, e.g., \cite[Theorem IV.1.1]{DiestelUhl}). 
If $X^*$ fails the Radon-Nikod\'ym property, the description of the dual is complicated. However, there is a nice natural subspace of the dual which we will use below.

Denote by $L^\infty_{w^*}(\mu;X^*)$ the space of all (equivalence classes of) essentially bounded weak$^*$-measurable mappings $\g:\Omega\to X^*$, equipped with the essential supremum norm. Recall that $\g$ is \emph{weak$^*$-measurable} if $\omega\mapsto \g(\omega)(x)$ is measurable for each $x\in X$. It is easy to check that in this way we obtain a Banach space.

\begin{lemma}\label{lem:field-functional}
Let $(\Omega,\Sigma,\mu)$ be a finite measure space and let $X$ be a nonzero
separable Banach space. Then $L^\infty_{w^*}(\mu;X^*)$ is canonically isometric to a subspace of $(L^1(\mu;X))^*$ via the mapping
\[
\Phi(\g)(\f)=\int_\Omega \pair{\g(\omega)}{\f(\omega)}\,d\mu(\omega),\quad \f\in L^1(\mu;X), \g\in L^\infty_{w^*}(\mu;X^*).
\]
\end{lemma}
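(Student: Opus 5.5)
We must check three things: that the integrand in the definition of $\Phi$ is measurable and integrable, so that $\Phi(\g)$ is a bounded functional with $\norm{\Phi(\g)}\le\norm{\g}_\infty$; that $\Phi(\g)$ does not depend on the representatives of $\g$ and $\f$; and the reverse inequality $\norm{\Phi(\g)}\ge\norm{\g}_\infty$. Linearity of $\Phi$ is clear.

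\emph{Measurability and the upper bound.} Take $\f\in L^1(\mu;X)$. Since $\f$ is strongly measurable, there are simple functions $\f_n=\sum_i x_i\chi_{A_i}$ with $\f_n\to\f$ $\mu$-a.e. For each simple $\f_n$ the function $\omega\mapsto\pair{\g(\omega)}{\f_n(\omega)}=\sum_i\g(\omega)(x_i)\chi_{A_i}(\omega)$ is measurable by weak$^*$-measurability. Moreover, $|\pair{\g(\omega)}{\f_n(\omega)}-\pair{\g(\omega)}{\f(\omega)}|\le\norm{\g(\omega)}\norm{\f_n(\omega)-\f(\omega)}$. There is one subtlety here: $\omega\mapsto\norm{\g(\omega)}$ need not be measurable a priori. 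It is, however, bounded a.e. by the essential supremum, understood as the least $c$ with $\norm{\g(\omega)}\le c$ a.e. Using separability of $X$, choose a dense sequence $(x_k)$ in $S_X$; then $\norm{\g(\omega)}=\sup_k|\g(\omega)(x_k)|$ is measurable, so the essential supremum makes sense. Hence $\pair{\g}{\f}$ is an a.e. limit of measurable functions and is therefore measurable. It is also dominated by $\norm{\g}_\infty\norm{\f(\cdot)}$, so it is integrable and $|\Phi(\g)(\f)|\le\norm\g_\infty\norm\f_1$. Independence of representatives follows from the same pointwise bound: changing $\g$ on a null set, or $\f$ on a null set, does not change the integral.

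\emph{Lower bound; this is the main point.} Fix $\varepsilon>0$ and put $c=\norm\g_\infty$. The set $\{\omega:\norm{\g(\omega)}>c-\varepsilon\}$ has positive measure and equals $\bigcup_k\{\omega:\g(\omega)(x_k)>c-\varepsilon\}$; here we may replace $x_k$ by $-x_k$ by symmetrizing the dense sequence. Hence some set $A=\{\omega:\g(\omega)(x_k)>c-\varepsilon\}$ satisfies $0<\mu(A)<\infty$. Take $\f=\mu(A)^{-1}x_k\chi_A$. Then $\norm\f_1=1$, while $\Phi(\g)(\f)\ge c-\varepsilon$. Therefore $\norm{\Phi(\g)}\ge c$. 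If $c=0$ there is nothing to prove, and the condition $X\neq 0$ guarantees that $S_X$ is nonempty.

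Together the two bounds show that $\Phi$ is a linear isometry of $L^\infty_{w^*}(\mu;X^*)$ into $(L^1(\mu;X))^*$. The only real obstacle is the measurability of $\omega\mapsto\norm{\g(\omega)}$, and the separability of $X$ resolves it.
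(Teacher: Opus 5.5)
Your proof is correct and follows the paper's three-step scheme. You show measurability of $\omega\mapsto\pair{\g(\omega)}{\f(\omega)}$ via a.e.\ simple approximants, get $\norm{\Phi(\g)}\le\norm{\g}_\infty$ from the pointwise bound, and obtain the lower bound from a dense sequence in the unit sphere of $X$. The one difference is in the lower bound: you choose a single $x_k$ with $\mu(\{\omega:\g(\omega)(x_k)>c-\varepsilon\})>0$ and test against one simple function. The paper instead partitions $\{\norm{\g}>a\}$ into sets $E_j$ and uses a countably valued test function. Both work, and yours is slightly simpler.
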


\begin{proof} The proof is done in three steps. The first two steps may be done essentially by copying the argument from \cite[pp. 97--98]{DiestelUhl}. We give the easy arguments because we have a slightly different setting.

Step 1: If $\f\in L^1(\mu;X)$ and $\g\in L^\infty_{w^*}(\mu;X^*)$, then the mapping $\omega\mapsto \pair{\g(\omega)}{\f(\omega)}$ is measurable.

\smallskip

Indeed, since $\f$ is strongly measurable, there
are simple measurable functions $\f_n\colon \Omega\to X$ such that
$\f_n\to \f$ almost everywhere. For every $n$, the function
$\omega\mapsto \pair{\g(\omega)}{\f_n(\omega)}$ is measurable by the weak$^*$-measurability of $\g$. Furthermore,
\[\pair{\g(\omega)}{\f_n(\omega)}\to \pair{\g(\omega)}{\f(\omega)}\mbox{ a.e.}
\]
hence the limit function is measurable as well.

\smallskip

Step 2: $\Phi$ is a contractive mapping from $L^\infty_{w^*}(\mu;X^*)$ into $(L^1(\mu;X))^*$.

\smallskip

Given $\g\in L^\infty_{w^*}(\mu;X^*)$ and $\f\in L^1(\mu;X)$, the mapping $\omega\mapsto \pair{\g(\omega)}{\f(\omega)}$ is measurable by Step 1. It is also integrable because
$\abs{\pair{\g(\omega)}{\f(\omega)}}\le\norm{\g(\omega)}\norm{\f(\omega)}\le \norm{\g}_\infty\norm{\f(\omega)}$ almost everywhere. We now easily deduce that $\Phi(\g)\in (L^1(\mu;X))^*$ and $\norm{\Phi(\g)}\le\norm{\g}_\infty$. Since $\Phi$ is clearly linear, the argument is complete.

\smallskip

Step 3: $\norm{\Phi(\g)}\ge\norm{\g}_\infty$, hence $\Phi$ is an isometry.

\smallskip
If $\g=0$, the statement is trivial. So, assume $\g\ne 0$. Fix $0<a<\norm{\g}_\infty$ and let
$E=\{\omega:\norm{\g(\omega)}>a\}$, which has positive measure.  Choose a countable
dense set $(u_j)$ in the unit sphere of $X$.  For every $\omega\in E$, there is some $j$ such that
$\pair{\g(\omega)}{u_j}>a$. Define
\[
 E_j
 :=
 \bigl\{\omega\in E:\pair{\g(\omega)}{u_j}>a
 \text{ and }\pair{\g(\omega)}{u_i}\leq a\text{ for every }i<j\bigr\}.
\]
Then $(E_j)_{j\geq1}$ is a measurable partition of $E$. 
Define
\[
 \f(\omega)=  \frac{u_j}{\mu(E)},\quad \omega\in E_j, j\in\N.
\]
The function $\f$ is countably valued and strongly measurable,
$\|\f\|_1=1$, and 
$$\pair{\g(\omega)}{\f(\omega)}>\frac{a}{\mu(E)} \mbox{ for every }\omega\in E.$$  Thus
$\Phi(\g)(\f)>a$. This completes the proof.
\end{proof}

The previous lemma is inspired by the isometric inclusion $L^\infty(\mu;X^*)\subset (L^1(\mu;X))^*$ proved in \cite[pp. 97--98]{DiestelUhl}. The first two steps are essentially the same. But the third step is different. In \cite{DiestelUhl} it is used that elements of $L^\infty(\mu;X^*)$ are (by definition) strongly measurable; we instead impose and use separability of $X$. Our lemma fails without the assumption of separability of $X$. A counterexample may be easily constructed starting from the standard example \cite[Example II.1.5]{DiestelUhl}.

\subsection{Integration with respect to certain vector measures}\label{ssec:vectormeasures}

We will further use integration of vector valued functions with respect to measures with values in the dual space. The basic setting is the following. Let $(\Omega,\Sigma)$ be a measurable space and let $X$ be a Banach space. Let $\bmu:\Sigma\to X^*$ be a $\sigma$-additive regular measure with finite total variation. Then we may define in a natural way $\int_\Omega \f\,d\bmu$ for any bounded function $\f:\Omega\to X$ which may be uniformly approximated by simple measurable functions. This is explained in more detail, for example, in \cite[Section 2.3]{KS-transference}.

This type of integral is used in the Riesz-Singer representation theorem. Let us recall it. Let $K$ be a compact Hausdorff space and let $X$ be a nontrivial Banach space. Consider the space $C(K,X)$ of all $X$-valued continuous maps on $K$ equipped with the sup-norm. Its dual is canonically isometric to $M(K,X^*)$, the space of all $X^*$-valued  $\sigma$-additive regular Borel measures on $K$ with finite total variation, equipped with the total variation norm. This is a result of Singer; an easy proof is given in \cite{hensgen}. The duality is given by
$$\pair{\bmu}{\f}=\int_K\f\,d\bmu, \quad \f\in C(K,X),\bmu\in M(K,X^*),$$
where the integral is in the sense of the previous paragraph. (Note that continuous functions $K\to X$ may be uniformly approximated by simple Borel functions.)

If, additionally, $X^*$ is separable (or, more generally, $X^*$ has the Radon-Nikod\'ym property), 
then any $\bmu\in M(K,X^*)$ has a density with respect to its variation $\abs{\bmu}$. I.e., there is a function $\uu\in L^1(\abs{\bmu};X^*)$ such that
$$\bmu(A)=\int_A \uu\,d\abs{\bmu},\quad A\subset K\mbox{ Borel}.$$
It is known and easy to see that necessarily $\norm{\uu}=1$ $\abs{\bmu}$-a.e. In this setting we have
\begin{equation}\label{eq:dual-vector-pairing}
 \pair{\bmu}{\f}  =\int_K\pair{\uu(t)}{\f(t)}\,d|\boldsymbol\mu|(t),\quad \f\in C(K,X).
\end{equation}
Indeed, this formula clearly holds for simple Borel functions and hence it passes to all continuous functions (using the uniform approximation by simple Borel functions).

\section{Proof of the representation theorem}\label{sec:proof-reprez}

The aim of this section is to prove Theorem~\ref{thm:main}. It will be done in several steps which partially follow the finite-dimensional case from \cite{ckk-mathematika}. So, let us fix a nontrivial separable Banach space $X$, an open convex subset $\Omega\subset X$ containing $0$ and a non-degenerate Gaussian measure $\Gamma$ on $X$. As in the theorem, denote by $\gamma$ the normalized restriction of $\Gamma$ to $\Omega$.

\smallskip

{\bf Step 1.} The mapping $D:\Lipz(\Omega)\to L^\infty_{w^*}(\gamma,X^*)$ defined by $Df=f'$ is a linear isometric embedding. (By $f'$ we mean the mapping assigning to $x\in\Omega$ the G\^ateaux derivative $f'(x)$.)

\smallskip

By property \ref{D} $f$ is $\gamma$-a.e. G\^ateaux differentiable (note that \ref{D} formally deals with Lipschitz functions on $X$, but this causes no difficulty as any Lipschitz function on $\Omega$ may be extended to a Lipschitz function on $X$). Thus the mapping $f'$ is defined $\gamma$-a.e. Moreover,
for each $z\in X$ the assignment
$$x\mapsto Df(x)(z)=f'(x)[z]=\partial_z f(x)$$
is Baire-one (on its domain), hence $\gamma$-measurable. Thus $f'$ is weak$^*$-measurable. Moreover, by definition it is clear that $\norm{f'}_\infty\le \norm{f}_{\mathrm{Lip}}$. Hence $D$ is a well-defined mapping from $\Lipz(\Omega)$ to $L^\infty_{w^*}(\gamma,X^*)$. Since it is clearly linear, we get $\norm{D}\le1$.

It remains to prove that $D$ is an isometry. To this end fix $f\in\Lipz(\Omega)$ and set
$L=\norm{Df}_\infty$. Let $F:X\to \R$ be a Lipschitz extension of $f$. By property \ref{D} we may fix a Borel $\Gamma$-null set $N_F\subset X$ such that $F$ is G\^ateaux differentiable outside $N_F$.
The set
\[
 N=(N_F\cap\Omega)\cup\{x\in\Omega:\norm{f'(x)}>L\}
\]
is $\Gamma$-null. Fix $h\in H_\Gamma$.  Property \ref{NT} gives
$\Gamma(N-th)=0$ for every $t\in\R$.  Hence Fubini-Tonelli's theorem yields
\[\begin{aligned}
     \int_X\mathcal L^1\{t\in[0,1]:x+th\in N\}\,d\Gamma(x)&=\int_X\int_0^1 \chi_{\{(t,h): x+th\in N\}} \, dt\, d\Gamma(x)\\& =\int_0^1\Gamma(N-th)\,dt=0,\end{aligned}
\]
where $\mathcal{L}^1$ stands for the Lebesgue measure on $\R$.
Thus, for $\Gamma$-almost every $x$, the segment $x+[0,1]h$ meets $N$ in
a one-dimensional null set.

Put $V_h=\Omega\cap(\Omega-h)$.  This set is open, and convexity of
$\Omega$ gives $x+[0,1]h\subset\Omega$ whenever $x\in V_h$.  For every
such $x$, the function $t\mapsto f(x+th)$ is Lipschitz and hence absolutely
continuous.  For $\Gamma$-almost every $x\in V_h$, it satisfies, for almost
every $t\in[0,1]$,
\[
 \frac{d}{dt}f(x+th)=Df(x+th)[h],
 \qquad
 |Df(x+th)[h]|\leq L\norm h.
\]
The fundamental theorem of calculus therefore gives
\begin{equation}\label{eq:increment-D}
 |f(x+h)-f(x)|\leq L\norm h
\end{equation}
for $\Gamma$-almost every $x\in V_h$, hence for a dense subset of $V_h$. By continuity of $f$ we deduce it holds for every $x\in V_h$.

Now let $x,y\in\Omega$.  Density of $H_\Gamma$ provides a sequence $(h_j)$ in $H_\Gamma$
with $h_j\to y-x$.  For all sufficiently large $j$, $x+h_j\in\Omega$, so $x\in V_{h_j}$ and \eqref{eq:increment-D} gives $|f(x+h_j)-f(x)|\leq L\|h_j\|$. Passing to the limit gives $|f(y)-f(x)|\leq L\norm{y-x}$. Thus $f$ is $L$-Lipschitz, as required.

\smallskip

{\bf Step 2.} Let $D$ be the operator from Step 1 and let $\Phi$ be the operator from Lemma~\ref{lem:field-functional} applied to measure $\gamma$. Then $\Phi\circ D$ is an isometric embedding of $\Lipz(\Omega)$ into $(L^1(\gamma;X))^*$. Let $S=(\Phi\circ D)^*|_{L^1(\gamma,X)}$. Then $S:L^1(\gamma;X)\to (\Lipz(\Omega))^*$ is given by the formula
$$S(\g)(f)=\int_\Omega \pair{f'(x)}{\g(x)}\,d\gamma(x),\quad f\in\Lipz(\Omega),\g\in L^1(\gamma;X).$$

\smallskip

Indeed, $\Phi\circ D$ is an isometric embedding, being the composition of two isometric embeddings. Further, by the definitions we have
$$S(\g)(f)=\kappa(\g)(\Phi(D f))= \Phi(f')(\g),$$
where $\kappa$ denotes the canonical embedding of $L^1(\gamma;X)$ into its bidual. Thus the formula follows from the formula from Lemma~\ref{lem:field-functional}.

\smallskip

{\bf Step 3.} If $\g=\psi h$, where $h\in H_\Gamma$ and $\psi:\Omega\to \er$ is a 
Lipschitz function with bounded support such that $\dist(\supp\psi,X\setminus\Omega)>0$,
then $S(\g)\in\F(\Omega)$. More precisely,
$$S(\g)=\int_\Omega \delta_\Omega(x)(b_h(x)\psi(x)-\partial_h\psi(x))\, d\gamma(x),$$
where the integral is in the Bochner sense and $b_h$ is the function provided by property \ref{GIP}.

\smallskip

Let us first observe that the Bochner integral exists. The integrand is strongly measurable as 
the mapping $\delta_\Omega$ is continuous, $b_h\in L^1(\Gamma)$, hence it is measurable, $\psi$ is continuous, $\partial_h\psi$ is Borel measurable and, additionally, $\F(\Omega)$ is separable. Let us show that it is also Bochner integrable. Note that $\partial_h\psi$ is bounded, $\psi$ is also bounded, being a Lipschitz function with bounded support, $b_h\in L^1(\Gamma)$ and hence $b_h|_\Omega\in L^1(\gamma)$. Finally, $\norm{\delta_\Omega(x)}=\norm{x}$ for each $x\in\Omega$, hence it is bounded on the support of $\psi$. By combining these facts we deduce the Bochner integrability. Hence the above Bochner integral defines some element of $\F(\Omega)$. It remains to show that this element coincides with $S(\g)$.

To this end fix any $f\in \Lipz(\Omega)$. This function admits a Lipschitz extension to $X$; denote it again by $f$. Further, extend $\psi$ by zero outside $\Omega$ and denote the extension again by $\psi$. Then $\psi$ is Lipschitz on $X$ (as $\dist(\supp\psi,X\setminus\Omega)>0$). Let us now compute
$$\begin{gathered}
\pair{f}{\int_\Omega \delta_\Omega(x)(b_h(x)\psi(x)-\partial_h\psi(x))\, d\gamma(x)}=
\int_\Omega f(x)(b_h(x)\psi(x)-\partial_h\psi(x))\, d\gamma(x)
\\=\frac1{\Gamma(\Omega)}\int_X f(x)(b_h(x)\psi(x)-\partial_h\psi(x))\, d\Gamma(x)
=\frac1{\Gamma(\Omega)}\int_X \partial_h f(x)\psi(x)\, d\Gamma(x) \\
=\int_\Omega  \partial_h f(x)\psi(x)\, d\gamma(x) =\int_\Omega \pair{f'(x)}{h}\psi(x)\, d\gamma(x)
=\int_\Omega \pair{f'(x)}{\psi(x)h}\, d\gamma(x)=S(\g)(f),
\end{gathered}$$
where the first equality follows from properties of the Bochner integral (commuting with bounded linear operators), and the third equality follows from property \ref{GIP}. This completes the argument.

\smallskip

{\bf Step 4.} The functions of the form from Step 3 are linearly dense 
in $L^1(\gamma;X)$.

\smallskip

Let us start by showing that the respective scalar functions $\psi$ are dense in $L^1(\gamma)$.
Since the described class of Lipschitz functions is a vector space and simple measurable functions are dense in $L^1(\gamma)$, it is enough to approximate characteristic functions.  Let $A\subset\Omega$ be Borel and let $\varepsilon>0$.
Radon regularity gives a compact set $K\subset A$ such that $\gamma(A\setminus K)<\varepsilon$. Let $V\supset K$ be open and bounded such that $\gamma(V\setminus K)<\varepsilon$ and $\dist(\overline{V},X\setminus\Omega)>0$. Then \[
 \psi(x)=
 \frac{\dist(x,X\setminus V)}
      {\dist(x,K)+\dist(x,X\setminus V)},
 \qquad x\in \Omega
\]
is  Lipschitz, equals $1$ on $K$, and vanishes on
$X\setminus V$. Hence $\supp\psi\subset\overline V$, so $\dist(\supp\psi,X\setminus\Omega)>0$.
 Moreover, 
\[
 \norm{\psi-\mathbf1_A}_{L^1(\gamma)}
 \leq\gamma(V\setminus A)+\gamma(A\setminus K)\leq \gamma(V\setminus K)+\varepsilon<2\varepsilon,
\]
which completes the argument.

Taking into account that $H_\Gamma$ is dense in $X$, the functions from Step 3 approximate any function of the form $f\cdot x$, where $f\in L^1(\gamma)$ and $x\in X$. It is well known and easy to check that these functions are linearly dense in $L^1(\gamma;X)$ (see, e.g., \cite[Example 2.19]{ryan}, it is enough to observe that simple functions are linear combinations of functions $f\cdot x$). This completes the proof.

\smallskip

{\bf Step 5.} Conclusion: $S$ is an isometric quotient of $L^1(\gamma;X)$ onto $\F(\Omega)$ and $S^*=\Phi\circ D$.

\smallskip

By Steps 3 and 4 $S$ maps a dense subspace of $L^1(\gamma;X)$ into $\F(\Omega)$. Since $S$ is continuous, it maps $L^1(\gamma;X)$ into $\F(\Omega)$. Now it follows by the very definition of $S$ in Step 2 that $S^*=\Phi\circ D$. Since $S^*$ is an isometric embedding (by Step 2 and the preceding sentence), $S$ is an isometric quotient by Proposition~\ref{prop:subspaces-quotients}$(iii)$. This completes the proof of the theorem (the formula for $S$ follows from Step 2).

\section{Two applications of the representation theorem}
\label{sec:first-consequences}

The isometric quotient representation has two immediate classical
consequences which help motivate its construction.  First, it recovers the theorem of Godefroy and Kalton that every separable Banach space has the
isometric lifting property; see \cite[Theorem~3.1]{GodefroyKalton}. A closely
related formula already appears in their alternative proof based on Gaussian averaging. Here, however, the lifting arises naturally and directly from the representation operator.
Second, on passing to adjoints, the same representation recovers, in the separable case, Lindenstrauss's theorem that $X^*$ is $1$-complemented in $\Lipz(X)$; see \cite{Lindenstrauss} and also \cite[Chapter~7]{BenyaminiLindenstrauss}. Moreover, the representation simultaneously yields localized versions of both conclusions for every open convex subset $\Omega\subset X$ containing $0$.

Throughout this section, let $X$, $\Omega$, $\gamma$, and $S:L^1(\gamma;X)\longrightarrow\F(\Omega)$ be as in Theorem~\ref{thm:main}. Let $\beta_\Omega:\F(\Omega)\longrightarrow X$ be the linearization of the inclusion $\Omega\hookrightarrow X$. This is the usual barycenter map, see e.g. \cite[p. 124]{GodefroyKalton}. We say that a Banach space $X$ has the \emph{isometric lifting property} if there exists a norm-one linear operator $T:X\to \F(X)$ satisfying $\beta_X T = \operatorname{Id}_X$. 

\begin{proposition}\label{prop:barycenter-and-lifting}
Let $X$ be a nontrivial separable Banach space, let
$\Omega\subset X$ be a nonempty open convex set containing $0$, and let
$\gamma$ and $S:L^1(\gamma;X)\to\F(\Omega)$ be as in
Theorem~\ref{thm:main}.
Then
\begin{equation}\label{eq:barycenter-compatibility}
    \beta_\Omega(S \g)=\int_\Omega \g\, d\gamma,\quad \g\in L^1(\gamma;X).
\end{equation}
Moreover, the operator $R_\gamma:X\to\F(\Omega)$ defined by
$$R_\gamma(x)=S(x), \quad x\in X,$$
where $x$ plugged in $S$ means the constant mapping equal to $x$ on $\Omega$,
is a linear isometric embedding satisfying 
\begin{equation}\label{eq:localized-lifting-right-inverse}
 \beta_\Omega R_\gamma=\operatorname{Id}_X.
\end{equation}
In particular, $X$ has the isometric lifting property.
\end{proposition}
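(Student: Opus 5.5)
The plan is to check the barycenter formula \eqref{eq:barycenter-compatibility} by testing against functionals, and then to read off the properties of $R_\gamma$ from it together with Theorem~\ref{thm:main}.

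\emph{The barycenter formula.} Fix $\g\in L^1(\gamma;X)$ and $x^*\in X^*$. Apply Fact~\ref{fact:linearization-adjoint} with $M=N=\Omega$, $Y=X$ and $f$ the inclusion $\Omega\hookrightarrow X$. It gives
\[
\pair{\beta_\Omega(S\g)}{x^*}=\pair{S\g}{x^*|_\Omega}.
\]
Here $x^*|_\Omega\in\Lipz(\Omega)$ because $0\in\Omega$. By the formula for $S$ in Theorem~\ref{thm:main}, and since $(x^*)'(x)=x^*$ at every point, this equals
\[
\int_\Omega\pair{x^*}{\g(x)}\,d\gamma(x)=\pair{x^*}{\int_\Omega\g\,d\gamma}.
\]
The last step uses that the Bochner integral commutes with bounded linear functionals. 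Since $x^*$ was arbitrary, Hahn--Banach yields \eqref{eq:barycenter-compatibility}.

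\emph{Properties of $R_\gamma$.} Linearity of $R_\gamma$ is clear, since $x\mapsto$ (the constant function $x$) is linear into $L^1(\gamma;X)$ and $S$ is linear. Because $\gamma$ is a probability measure, the constant function $x$ has $L^1$-norm $\norm{x}$. As $S$ is an isometric quotient, it has norm one, so $\norm{R_\gamma x}\le\norm{x}$. Applying \eqref{eq:barycenter-compatibility} to the constant function gives $\beta_\Omega R_\gamma x=\int_\Omega x\,d\gamma=x$, which is \eqref{eq:localized-lifting-right-inverse}. Finally, $\beta_\Omega$ is the linearization of a $1$-Lipschitz map, so $\norm{\beta_\Omega}\le1$, and therefore
\[
\norm{x}=\norm{\beta_\Omega R_\gamma x}\le\norm{R_\gamma x}.
\]
Together with the upper bound, this shows $R_\gamma$ is an isometric embedding.

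\emph{The isometric lifting property.} Take $\Omega=X$. Then $\F(\Omega)=\F(X)$ and $\beta_\Omega=\beta_X$, so $T=R_\gamma$ is a norm-one operator with $\beta_X T=\operatorname{Id}_X$, as required.

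No step seems to be a genuine obstacle; everything reduces to Theorem~\ref{thm:main}. The only point requiring care is that in the first step the test functions $x^*|_\Omega$ are honest elements of $\Lipz(\Omega)$, so the defining formula of $S$ applies to them directly. Also, the Gâteaux derivative of $x^*$ exists everywhere, so no null-set issue arises.
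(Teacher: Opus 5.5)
Your proposal is correct and follows the paper's proof essentially step for step. You test against $x^*\in X^*$ via Fact~\ref{fact:linearization-adjoint} and the formula for $S$, then apply the barycenter identity to constant fields, and squeeze $\norm{R_\gamma x}$ between $\norm{x}$ using contractivity of $S$ and of $\beta_\Omega$, finally taking $\Omega=X$.
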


\begin{proof}
For $\g\in L^1(\gamma;X)$ and $x^*\in X^*$,
Fact~\ref{fact:linearization-adjoint}, applied to the inclusion
$\Omega\hookrightarrow X$, and the formula for $S$ in
Theorem~\ref{thm:main} give
\[
 \pair{\beta_\Omega S\g}{x^*}
 =\pair{S\g}{x^*|_\Omega}
 =\int_\Omega x^*(\g(t))\,d\gamma(t)=\pair{\int_\Omega\g\,d\gamma}{x^*},
\]
since $(x^*|_\Omega)'(t)=x^*$ for every $t\in\Omega$.
As $x^*\in X^*$ was arbitrary, this proves
\eqref{eq:barycenter-compatibility}.

Applying this identity to the constant function equal to $x\in X$
and using $\gamma(\Omega)=1$, we obtain
$\beta_\Omega R_\gamma x=x$.
Moreover, $R_\gamma$ is linear, and the contractivity of $S$
and $\beta_\Omega$ yields
\[
 \norm{x}
 =\norm{\beta_\Omega R_\gamma x}
 \le\norm{R_\gamma x}
 \le\norm{x}_{L^1(\gamma;X)}
 =\norm{x}.
\]
Thus $R_\gamma$ is an isometric embedding satisfying
\eqref{eq:localized-lifting-right-inverse}.
Taking $\Omega=X$ gives the isometric lifting property.
\end{proof}

Note that the above result does more than give a different proof of
\cite[Theorem~3.1]{GodefroyKalton}: it identifies both maps occurring in
the lifting identity within the isometric quotient representation.  The
barycenter map is induced by taking the Bochner mean of a vector field,
whereas its norm-one right inverse is obtained by passing the constant
vector fields through the quotient map $S$. 

Using dual operators we get the following corollary.

\begin{corollary}\label{cor:averaged-derivative-projection}
In the setting of \Cref{prop:barycenter-and-lifting} the dual mapping $R_\gamma^*$ is a norm-one projection of $\Lipz(\Omega)=\F(\Omega)^*$ onto $X^*$. Further, it is given by the formula
\[
 (R_\gamma^* f)(x)
 =\int_\Omega \partial_xf\,d\gamma  \quad f\in\Lipz(\Omega),\ x\in X.
\]
Hence, the canonical copy of $X^*$ in $\Lipz(\Omega)$ is
$1$-complemented.
\end{corollary}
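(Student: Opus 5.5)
We take the adjoint of the identity \eqref{eq:localized-lifting-right-inverse}. By Proposition~\ref{prop:barycenter-and-lifting}, $R_\gamma:X\to\F(\Omega)$ is an isometric embedding with $\beta_\Omega R_\gamma=\operatorname{Id}_X$, so dualizing gives
\[
R_\gamma^*\beta_\Omega^*=\operatorname{Id}_{X^*}.
\]
Here $\beta_\Omega^*:X^*\to\Lipz(\Omega)$ is computed from Fact~\ref{fact:linearization-adjoint}: $\pair{\beta_\Omega\xi}{x^*}=\pair{\xi}{x^*|_\Omega}$. Hence $\beta_\Omega^*x^*=x^*|_\Omega$, which is exactly the canonical copy of $X^*$ in $\Lipz(\Omega)$. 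Restriction to the open set $\Omega$ is isometric on linear functionals, since the Lipschitz constant of $x^*$ on $\Omega$ equals $\norm{x^*}$. Therefore $P:=\beta_\Omega^*R_\gamma^*$ is an idempotent operator on $\Lipz(\Omega)$ whose range is $\beta_\Omega^*(X^*)$; its range is this whole set because $R_\gamma^*\beta_\Omega^*=\operatorname{Id}$. Identifying $X^*$ with its canonical copy, $R_\gamma^*$ is this projection.

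For the norm, $\norm{R_\gamma^*}=\norm{R_\gamma}=1$ since $R_\gamma$ is an isometric embedding, and $\beta_\Omega^*$ is isometric as noted. Thus $\norm{P}\le1$. Because the range is nontrivial ($X\neq\{0\}$), we get $\norm{P}=1$.

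It remains to verify the formula. For $f\in\Lipz(\Omega)$ and $x\in X$, use the formula for $S$ in Theorem~\ref{thm:main} with the constant field $\g\equiv x$:
\[
(R_\gamma^*f)(x)=\pair{f}{R_\gamma x}=\pair{f}{S(x)}=\int_\Omega\pair{f'(t)}{x}\,d\gamma(t)=\int_\Omega\partial_xf\,d\gamma.
\]
The last equality uses that $f$ is G\^ateaux differentiable $\gamma$-a.e. by property~\ref{D}, so that $f'(t)[x]=\partial_xf(t)$ almost everywhere. The only point needing care is the identification of $\beta_\Omega^*$ with restriction and checking that the restriction is isometric; both are immediate, so no real obstacle is expected.
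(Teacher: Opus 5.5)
Your proof is correct and is essentially the paper's argument: the formula is computed exactly as there, and the norm bound likewise comes from $R_\gamma$ being an isometric embedding. The only difference is that you obtain $R_\gamma^*(x^*|_\Omega)=x^*$ by dualizing $\beta_\Omega R_\gamma=\operatorname{Id}_X$, after identifying $\beta_\Omega^*$ with restriction. The paper instead checks it directly from the averaged-derivative formula using $(x^*|_\Omega)'=x^*$ and $\gamma(\Omega)=1$, which is the same computation that proved the lifting identity; your version makes the identification of $X^*$ with its canonical copy in $\Lipz(\Omega)$ explicit.
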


\begin{proof}
Let us compute the formula for $R_\gamma^*$.
For $f\in\Lipz(\Omega)$ and $x\in X$ we have
$$R_\gamma^*f (x)=\pair{f}{R_\gamma x}=\pair{f}{Sx}=\int_\Omega f'(t)(x) \, d\gamma(t)
=\int_\Omega\partial_x f\, d\gamma,$$
which is exactly the promised formula.

Moreover, if $f=x^*\in X^*\subset \Lipz(\Omega)$ (more precisely, $f=x^*|_\Omega$), then
$$R_\gamma^*x^*(x)=\int_\Omega \partial_x x^*\, d\gamma=\int_\Omega x^*(x)\, d\gamma=x^*(x),$$
thus $R_\gamma^*x^*=x^*$. It follows that it is a projection. It has norm one as $R_\gamma$ is an isometric embedding.
\end{proof}

For $\Omega=X$, the previous corollary gives the separable case of Lindenstrauss's
theorem on complementability of $X^*$ in $\Lipz(X)$. Additionally, it provides an explicit formula for this projection: it assigns to a Lipschitz function its averaged derivative.

\begin{remark}
    We note that the embedding in Proposition~\ref{prop:barycenter-and-lifting} and the projection in Corollary~\ref{cor:averaged-derivative-projection} depend on the choice of the measure $\gamma$. Moreover, even for fixed $\gamma$ we may obtain a rich variety of embeddings and projections. Indeed, let $w\in L^1(\gamma)$ be any positive function with norm one. Then
    $$R_{\gamma,w}(x)=S(w\cdot x), \quad x\in X$$
    defines an isometric embedding of $X$ into $\F(\Omega)$. The dual mapping is then given by
    $$(R_{\gamma,w}^*f)(x)=\int_{\Omega} \partial_x f\cdot w \,d\gamma,\quad f\in\Lipz(\Omega), x\in X,$$
    and it is also a norm-one projection of $\Lipz(\Omega)$ onto $X^*$. The embedding $R_{\gamma,w}$ for a suitable weight $w$ will be used also in Section~\ref{sec:Mrep-complemented-implies-Hilbert} below. 
\end{remark}

\section{Norming subspace of smooth functions}\label{sec:norming-smooth}

In finite-dimensional spaces an important role is played by approximation by $C^\infty$-functions.
For example, in \cite{ckk-mathematika,CKK} such approximations (based mainly on convolutions with an approximate unit recalled in Section~\ref{subsec:convolution}) were strongly used. In this section we present a kind of substitute for this tool for infinite-dimensional Banach spaces. Some statements will be given for general Banach spaces as we are convinced they may be useful also elsewhere; some of them will be designed for our specific situation.

Let $X$ be a Banach space and let $W\subset X^*$ be a linear subspace.
A \emph{smooth $W$-cylindrical function} on $X$ is a function of the
form
\begin{equation}\label{eq:general-cylinder}
 \varphi(x)
 =g\bigl(w_1(x),\ldots,w_m(x)\bigr), x\in X,
 \mbox{ where }m\in\N,\  
 w_1,\ldots,w_m\in W,
 \ g\in C^\infty(\R^m).
\end{equation}
We denote the class of all such functions by
$\operatorname{Cyl}_W(X)$. The next lemma collects basic properties of such functions.

\begin{lemma}\label{L:cyl-smooth}
    Let $X$ be a Banach space, let $W\subset X^*$ be a linear subspace and let $\varphi\in\Cyl_W(X)$ be given  by \eqref{eq:general-cylinder}. Then the following assertions are valid:
    \begin{enumerate}[$(i)$]
        \item\label{it:smooth}  $\varphi\in C^\infty(X)$.
        \item\label{it:chainRule} The Fr\'echet derivative of $\varphi$ is given by
        $$\varphi'(x)=\sum_{j=1}^m \partial_j g\bigl(w_1(x),\ldots,w_m(x)\bigr)w_j,\quad x\in X.$$
        In particular, $\varphi'$ is a $\sigma(X,W)$-to-norm continuous mapping from $X$ into $W$.
        \item\label{it:LipOnBdd} $\varphi$ is Lipschitz on bounded sets.
        \item\label{it:convexFormulaBdd} If $K\subset X$ is a bounded convex set with nonempty norm-interior, then
        $$\norm{\varphi|_K}_{\mathrm{Lip}}=\norm{\varphi'|_K}_\infty=\norm{(\varphi|_{\Int K})'}_\infty.$$
    \end{enumerate}
\end{lemma}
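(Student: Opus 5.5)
The plan is to factor $\varphi=g\circ P$, where $P:X\to\R^m$ is the bounded linear map $P(x)=(w_1(x),\ldots,w_m(x))$. All four assertions then reduce to finite-dimensional calculus for $g$ and the chain rule.

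For \ref{it:smooth} and \ref{it:chainRule}, note that $P$ is a bounded linear operator, so $P\in C^\infty(X;\R^m)$ with $P'(x)=P$ and all higher derivatives zero. Since $g\in C^\infty(\R^m)$, the chain rule for Fr\'echet derivatives gives $\varphi\in C^\infty(X)$. It also gives $\varphi'(x)=g'(Px)\circ P=\sum_j\partial_jg(Px)\,w_j$. Higher-order derivatives come by induction: the $k$-th derivative is $g^{(k)}(Px)[P\cdot,\ldots,P\cdot]$, which is continuous in operator norm because $g^{(k)}$ is continuous and $P$ is bounded. For the continuity claim, $x\mapsto Px$ is $\sigma(X,W)$-to-norm continuous into $\R^m$, since each $w_j\in W$. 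The map $y\mapsto\sum_j\partial_jg(y)w_j$ is continuous from $\R^m$ into the finite-dimensional subspace $\linspan\{w_j\}\subset W$ with its norm. Composing the two gives the claim.

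For \ref{it:LipOnBdd}, let $B\subset X$ be bounded and put $r=\sup_{x\in B}\norm{x}$. Then $P(B)$ lies in a compact ball $K$ of $\R^m$, and $g$ is Lipschitz on the convex set $K$ with constant $\sup_K\norm{g'}$. Hence $|\varphi(x)-\varphi(y)|\le\sup_K\norm{g'}\,\norm{P}\,\norm{x-y}$ for $x,y\in B$. Convexity is needed only in $\R^m$, so $B$ need not be convex.

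For \ref{it:convexFormulaBdd}, the inequality $\norm{(\varphi|_{\Int K})'}_\infty\le\norm{\varphi'|_K}_\infty$ is trivial. For every $x\in\Int K$, $(\varphi|_{\Int K})'(x)=\varphi'(x)$. For $\norm{\varphi'|_K}_\infty\le\norm{\varphi|_K}_{\Lip}$, take $x\in K$ and a unit vector $h$. If $x\in\Int K$, then $\varphi'(x)[h]$ is a limit of difference quotients of points in $K$, so it is bounded by $\norm{\varphi|_K}_{\Lip}$. Interior points are norm-dense in $K$, because $K$ is convex with nonempty interior: for $x_0\in\Int K$, the points $(1-t)x+tx_0$ lie in $\Int K$ for $t\in(0,1]$. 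Continuity of $\varphi'$ then extends the bound to all of $K$. The remaining inequality, $\norm{\varphi|_K}_{\Lip}\le\norm{(\varphi|_{\Int K})'}_\infty$, is the step needing the most care. For $x,y\in\Int K$, the segment $[x,y]$ lies in $\Int K$, and the mean value inequality gives $|\varphi(x)-\varphi(y)|\le\sup_{\Int K}\norm{\varphi'}\,\norm{x-y}$. For general $x,y\in K$, approximate them by interior points using the density above, and pass to the limit using continuity of $\varphi$.

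The three inequalities close into the claimed chain of equalities. Boundedness of $K$ ensures these quantities are finite, by \ref{it:LipOnBdd}.
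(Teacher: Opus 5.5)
Your proof is correct and follows essentially the same route as the paper: the chain rule through $P=(w_1,\dots,w_m)$ for (i)--(ii), boundedness of $g'$ on a compact subset of $\R^m$ for (iii), and, for (iv), density of $\Int K$ in $K$, continuity of $\varphi$ and $\varphi'$, and the mean value inequality, which the paper compresses into a single chain of equalities. The only minor difference is in (iii), where you use convexity of a ball in $\R^m$ rather than first replacing the bounded set by its convex hull in $X$; this works equally well and gives an explicit Lipschitz constant, namely $\norm{P}$ times the supremum of $\norm{g'}$ over that ball.
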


\begin{proof}
    Assertion \ref{it:smooth} and the formula from \ref{it:chainRule} follow by the chain rule, see e.g. \cite[Theorem~1.83]{HajekJohanis}. The `in particular' part of \ref{it:chainRule} follows immediately from the formula (note that $\sigma(X,W)$ is, as usual, the weakest topology on $X$ making elements of $W$ continuous). 

    \ref{it:LipOnBdd}:  Let $M\subset X$ be bounded. Since $\co M$ is also bounded, we may assume without loss of generality that $M$ is convex. Set
\[
 Px=(w_1(x),\ldots,w_m(x)),\quad x\in X.
\]
Then $PM$ is a bounded convex subset of $\R^m$, hence $\overline{PM}$ is compact. Thus the (continuous) partial derivatives of $g$ are bounded on $\overline{PM}$. By the formula from \ref{it:chainRule} we deduce that $\varphi'$ is bounded on $M$. Hence $\varphi$ is Lipschitz on $M$.

\ref{it:convexFormulaBdd}: Since $\Int K$ is norm-dense in $K$ and $\varphi'$ is continuous on $X$, we get
$$\norm{\varphi|_K}_{\mathrm{Lip}}=\norm{\varphi|_{\Int K}}_{\mathrm{Lip}}=\norm{(\varphi|_{\Int K})'}_\infty=\norm{\varphi'|_{\Int K}}_\infty=\norm{\varphi'|_K}_\infty.$$
This completes the proof.
\end{proof}

Let us now pass to a localized version.
If $M\subset X$ is a  bounded subset containing $0$, put
\begin{equation}\label{eq:general-cylindrical-classes}
 \mathscr \Cyl_W(M)
 =\{\varphi|_M: \varphi\in\operatorname{Cyl}_W(X)\},
 \qquad
 \Cyl_W^0(M)=\{\varphi\in\Cyl_W(M):\varphi(0)=0\}.
\end{equation}
The next lemma records the elementary properties of these classes that
will be used below. 

\begin{lemma}
\label{lem:general-cylindrical-properties}
The following assertions hold.
\begin{enumerate}[$(a)$]
\item\label{item:G1}
The class $\Cyl_W(M)$ is a unital subalgebra of the algebra
of all real-valued functions on $M$. Moreover, $\Cyl_W^0(M)$ is an ideal in $\Cyl_W(M)$, i.e., it is a linear subspace and
$\Cyl_W(M)\Cyl_W^0(M)\subset\Cyl_W^0(M)$.

\item\label{item:G2}
Every function in $\Cyl_W(M)$ is Lipschitz on $M$.  In
particular,
\[
 \mathscr \Cyl_W^0(M)\subset\Lipz(M).
\]
\item\label{item:G3} If $W$ separates points of $M$,
then $\Cyl_W^0(M)$ separates points of $M$. If $W$ is $1$-norming for $X$, then $\Cyl_W^0(M)$ is $1$-norming for $\F(M)$.
\end{enumerate}
\end{lemma}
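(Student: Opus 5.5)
Parts (a) and (b) are routine consequences of Lemma~\ref{L:cyl-smooth}; the substance is the norming claim in (c).

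For \ref{item:G1}, I will use that if $\varphi=g(w_1,\dots,w_m)$ and $\psi=h(v_1,\dots,v_k)$ are smooth $W$-cylindrical functions, then concatenating the lists gives
\[
\varphi+\psi=G(w_1,\dots,w_m,v_1,\dots,v_k)
\qquad\text{and}\qquad
\varphi\psi=H(w_1,\dots,w_m,v_1,\dots,v_k),
\]
with $G(s,t)=g(s)+h(t)$ and $H(s,t)=g(s)h(t)$, both in $C^\infty(\R^{m+k})$. Constants are cylindrical (take any $m$ and constant $g$), so $\Cyl_W(M)$ is a unital subalgebra. Evaluation at $0$ is multiplicative, which gives the ideal property of $\Cyl_W^0(M)$. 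For \ref{item:G2}, I will apply Lemma~\ref{L:cyl-smooth}\ref{it:LipOnBdd} to the bounded set $M$.

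For the first claim of \ref{item:G3}, take $x\ne y$ in $M$ and $w\in W$ with $w(x)\ne w(y)$. Then $w$ itself lies in $\Cyl_W^0(M)$ (take $g=\mathrm{id}$, and note $w(0)=0$), and it separates $x$ and $y$.

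For the norming claim I need, for each $\mu\in\F(M)$, that $\norm{\mu}=\sup\{\pair{\varphi}{\mu}:\varphi\in\Cyl_W^0(M),\ \norm{\varphi}_{\Lip}\le1\}$. By density it suffices to treat finitely supported $\mu=\sum_{i=1}^n a_i\delta(x_i)$, with $x_0=0$ among the support points. Let $\varepsilon>0$ and pick $f\in B_{\Lipz(M)}$ with $\pair f\mu>\norm\mu-\varepsilon$.

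The plan is to factor $f$ through finitely many functionals from $W$. Let $x_1,\dots,x_n$ be the support points. Since $W$ is $1$-norming, I can choose $w_1,\dots,w_N\in B_W$ so that the seminorm $p(x)=\max_j|w_j(x)|$ satisfies
\[
p(x_i-x_k)\ge(1-\varepsilon)\norm{x_i-x_k}
\]
for all pairs $i,k$ (finitely many conditions). Let $P=(w_1,\dots,w_N):X\to(\R^N,\norm\cdot_\infty)$. Then $P$ is $1$-Lipschitz, since $\norm{Px}_\infty\le\norm x$. On the finite set $P(\{x_i\})$ define $\tilde f(Px_i)=f(x_i)$. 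This is well defined because for $i\ne k$ we have $Px_i\ne Px_k$ whenever $\varepsilon<1$ (as $p(x_i-x_k)>0$), and it has Lipschitz constant at most $(1-\varepsilon)^{-1}$ in $\norm\cdot_\infty$.

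Next I extend $\tilde f$ to all of $\R^N$ by McShane, keeping the constant $(1-\varepsilon)^{-1}$, and mollify it to obtain $g=u_k*\tilde f\in C^\infty(\R^N)$. By Fact~\ref{fact:convolution-approximation}, $\norm g_{\Lip}\le(1-\varepsilon)^{-1}$ and $\norm{g-\tilde f}_\infty\le k^{-1}(1-\varepsilon)^{-1}$. Set $\varphi=g\circ P-g(0)$. Then $\varphi$ is cylindrical, vanishes at $0$, and is $(1-\varepsilon)^{-1}$-Lipschitz on $M$ because $P$ is $1$-Lipschitz into $\ell_\infty^N$. Moreover $\varphi(x_i)$ is close to $f(x_i)$ for every $i$, using $f(0)=0$ and $\tilde f(P0)=0$. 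Hence
\[
\pair{(1-\varepsilon)\varphi}{\mu}\ge\norm\mu-O\Bigl(\varepsilon+k^{-1}\sum|a_i|\Bigr),
\]
and letting $\varepsilon\to0$ and $k\to\infty$ gives the claim.

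The main obstacle is making the Lipschitz constant of $\varphi$ on all of $M$, not just on the support points, close to $1$. This is handled by routing through the $1$-Lipschitz map $P$ into $\ell_\infty^N$ and controlling constants only on the finite image set. One must also check that the approximate unit is allowed for the norm $\norm\cdot_\infty$ on $\R^N$; Fact~\ref{fact:convolution-approximation} permits an arbitrary norm, so this is fine.
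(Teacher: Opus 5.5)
Your proposal is correct and follows essentially the same route as the paper. Parts (a)--(b) are immediate. For the norming claim, you choose pairwise almost-norming functionals in $B_W$ and map into $\ell_\infty^N$. You then extend $f$ through this map by McShane, mollify, subtract the value at $0$ and rescale by $(1-\varepsilon)$, exactly as the paper does.

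The only cosmetic difference is in how you conclude. You verify the norming identity directly on finitely supported elements of $\F(M)$ and pass to all of $\F(M)$ by density. The paper instead shows that $B_{\Lipz(M)}$ is the weak$^*$ (pointwise) closure of $\Cyl_W^0(M)\cap B_{\Lipz(M)}$ and appeals to the bipolar theorem. The two formulations are equivalent.
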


\begin{proof} 
\ref{item:G1}: It is clear that $\Cyl_W(M)$ is a unital subalgebra. The properties of $\Cyl_W^0(M)$ are also obvious.

\ref{item:G2}: This follows from Lemma~\ref{L:cyl-smooth}\ref{it:LipOnBdd}.

\ref{item:G3}: Assume that $W$ separates points of $M$. Since $\Cyl_W^0(M)$ contains $W$ (more precisely, the restrictions of elements of $W$ to $M$), it separates points of $M$ as well.

Next assume that $W$ is $1$-norming. To prove that $\Cyl_W^0(M)$ is $1$-norming, it is enough to show that every
$f\in B_{\Lipz(M)}$ belongs to the weak$^*$ closure of
$\Cyl_W^0(M)\cap B_{\Lipz(M)}$.  Fix such an $f$ and put $L=\norm f_{\mathrm{Lip}}\leq1$.
If $L=0$, then $f=0\in\Cyl_W^0(M)$, so there is nothing to prove.
So, assume $L>0$. Fix $R>0$ such that $M\subset RB_X$.

Note that the weak$^*$-topology on $B_{\Lipz(M)}$ coincides with the topology of pointwise convergence on $M$. So, fix $z_0,\dots,z_n\in M$ pairwise distinct and $\varepsilon>0$. Without loss of generality $z_0=0$ and $\varepsilon<1$. 
Since $W$ is $1$-norming, for each pair $0\leq i<j\leq n$ we may choose
$w_{ij}\in B_W$ such that
\[
 |w_{ij}(z_i-z_j)| 
 >(1-\varepsilon)\norm{z_i-z_j}.
\]
Let
\[
 I=\{(i,j):0\leq i<j\leq n\},
 \qquad
 m=|I|=\binom{n+1}{2},
\]
and define
\[
 T:X\longrightarrow\ell_\infty(I)
 \cong\ell_\infty^{m},
 \qquad
 Tx=\bigl(w_{ij}(x)\bigr)_{(i,j)\in I}\,,\qquad x\in X.
\]
Then $\|T\|\le 1$.  Moreover, for $0\leq i<j\leq n$,
\begin{equation}\label{eq:finite-almost-isometry}
 \norm{Tz_i-Tz_j}_\infty
 \geq
 |w_{ij}(z_i-z_j)|
 >
 (1-\varepsilon)\norm{z_i-z_j}.
\end{equation}
Thus $T$ is one-to-one on $\{z_0,\dots,z_n\}$ and, moreover, its inverse is $\frac1{1-\varepsilon}$-Lipschitz on $\{T(z_0),\dots,T(z_n)\}$. Thus $f\circ T^{-1}$ is $\frac{L}{1-\varepsilon}$-Lipschitz on $\{T(z_0),\dots,T(z_n)\}$. Let $F:\ell^\infty(I)\to\er$ be its $\frac{L}{1-\varepsilon}$-Lipschitz extension. Further, let $(u_k)$ be an approximate unit  in $\R^I=\R^m$ 
(see Section~\ref{subsec:convolution}). By Fact~\ref{fact:convolution-approximation} we know that
$u_k*F$ are $\frac{L}{1-\varepsilon}$-Lipschitz $C^\infty$ functions converging to $F$ uniformly on $\R^m$. Fix $k\in \N$ such that $\|u_k*F-F\|_\infty<\varepsilon$.
Define
\[
 \varphi(x)
 =
 (1-\varepsilon)
 \bigl((u_k*F)(Tx)-(u_k*F)(0)\bigr),
 \qquad x\in X.
\]
The function $\varphi$ belongs to $\operatorname{Cyl}_W(X)$ and satisfies $\varphi(0)=0$. Therefore $\varphi|_M\in\Cyl_W^0(M)$.
 Furthermore, it is $L$-Lipschitz, hence
\[
 \varphi|_M\in\Cyl_W^0(M)\cap B_{\Lipz(M)}.
\]
Finally, using that $F(Tz_i)=f(z_i)$ and therefore also $F(T0)=f(0)=0$, we obtain
\begin{align*}
 |\varphi(z_i)-f(z_i)|
 &\leq
\varepsilon|f(z_i)|
 +(1-\varepsilon)|(u_k*F)(Tz_i)-(u_k*F)(0)-f(z_i)|
 \\ &\leq
 \varepsilon L R 
 +(1-\varepsilon)
 \bigl(
   |(u_k*F)(Tz_i)-F(T z_i)|
   +|(u_k*F)(T0)-F(T0)|
 \bigr)\\
 &\leq
 \varepsilon R+ (1-\varepsilon)\cdot2\varepsilon.
\end{align*}
Since the right-hand side may be arbitrarily small, this completes the proof.
\end{proof}

We finally pass to the situation specific to our main results. If $X=Y^*$ for some nontrivial Banach space $Y$, then $W=Y$ (considered as a subspace of $Y^{**}=X^*$) is a possible choice. 
Note that then $Y$ is $1$-norming for $X$.

\begin{lemma}
\label{lem:dual-cylindrical-calculus}
Let $Y$ be a nontrivial Banach space, let $X=Y^*$ and let $K\subset X$ be a weak$^*$ compact convex set whose norm-interior contains $0$. 
The following assertions hold.

\begin{enumerate}[$(a)$]
\item\label{item:C1} If $\varphi\in \Cyl_Y(X)$, then $\varphi'$ is a weak$^*$-to-norm continuous mapping from $X$ to $Y$.

\item\label{item:C2} $\Cyl_Y(K)$ is a norm-dense unital subalgebra in $C(K)$.

\item\label{item:C3}
Every $\varphi\in\Cyl_Y(K)$ is Lipschitz on $K$. Moreover, 
\begin{equation}\label{eq:dual-cyl-norm}
 \norm{\varphi}_{\mathrm{Lip}} = \norm{(\varphi|_{\Int K})'}_\infty=\norm{\psi'|_K}_\infty,
\end{equation}
where $\psi\in\Cyl_Y(X)$ is such that $\varphi=\psi|_K$.

\item\label{item:C3.5} $\Cyl_Y^0(K)$ is a subspace of $\Lipz(K)$ $1$-norming for $\F(K)$.

\item\label{item:C4}
For $g\in\Cyl_Y(K)$ and $\varphi\in\Cyl_Y^0(K)$ we have,
\begin{equation}\label{eq:dual-product-rule}
 (g\varphi)'(x)=g(x)\varphi'(x)+g'(x)\varphi(x),\quad x\in \Int K,
\end{equation}
and consequently
\begin{equation}\label{eq:dual-product-bound}
 \norm{g\varphi}_{\mathrm{Lip}}
 \leq
 \bigl(\norm{g}_\infty+\rho\norm{(g|_{\Int K})'}_\infty\bigr)
 \norm{\varphi}_{\mathrm{Lip}},\mbox{ where }\rho=\sup_{x\in K}\norm{x}.
\end{equation}
\item\label{item:C6} If $\mu$ is a finite regular Borel measure on $K$, then $\Cyl_Y(K)$ is dense in $L^1(\mu)$.
\end{enumerate}
\end{lemma}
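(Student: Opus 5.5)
We prove the assertions in order, reducing each to Lemma~\ref{L:cyl-smooth}, Lemma~\ref{lem:general-cylindrical-properties} and classical density theorems.

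For \ref{item:C1}: by Lemma~\ref{L:cyl-smooth}\ref{it:chainRule} the derivative is $\varphi'(x)=\sum_j \partial_j g(w_1(x),\dots,w_m(x))w_j$ with $w_j\in Y$. Each $x\mapsto w_j(x)$ is weak$^*$ continuous on $X=Y^*$ because $w_j\in Y$. Hence the scalar coefficients are weak$^*$ continuous, and $\varphi'$ is a finite sum of weak$^*$-continuous scalars times fixed vectors of $Y$. It is therefore weak$^*$-to-norm continuous into $Y$.

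For \ref{item:C2}: functions in $\Cyl_Y(X)$ are weak$^*$ continuous, so their restrictions to the weak$^*$ compact set $K$ lie in $C(K)$, where $K$ carries the weak$^*$ topology. Here $C(K)$ is understood with respect to that topology, which is the natural compact topology. By Lemma~\ref{lem:general-cylindrical-properties}\ref{item:G1} the class is a unital subalgebra. It separates points of $K$ since $Y$ separates points of $Y^*$. Stone--Weierstrass then gives norm density.

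For \ref{item:C3}: the Lipschitz property and the equalities \eqref{eq:dual-cyl-norm} follow from Lemma~\ref{L:cyl-smooth}\ref{it:convexFormulaBdd}. This applies because $K$ is norm-bounded, being weak$^*$ compact (uniform boundedness), convex, and has nonempty norm interior.

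For \ref{item:C3.5}: $Y$ is $1$-norming for $X$, and $K$ is bounded and contains $0$. So Lemma~\ref{lem:general-cylindrical-properties}\ref{item:G2},\ref{item:G3} gives $\Cyl^0_Y(K)\subset\Lipz(K)$ and the $1$-norming property.

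For \ref{item:C4}: take cylindrical extensions $\tilde g,\tilde\varphi\in\Cyl_Y(X)$. Their product $\tilde g\tilde\varphi$ is again cylindrical, because we may concatenate the lists of functionals. The ordinary Fréchet product rule gives \eqref{eq:dual-product-rule} on $X$, in particular on $\Int K$. For the bound, combine \ref{item:C3} applied to $g\varphi$ with the estimate
\[
|\varphi(x)|=|\varphi(x)-\varphi(0)|\le \norm{\varphi}_{\mathrm{Lip}}\norm{x}\le \rho\norm{\varphi}_{\mathrm{Lip}},
\]
and with $\norm{\varphi'(x)}\le\norm{\varphi}_{\mathrm{Lip}}$ on $\Int K$. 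Taking the supremum of the norm of the right-hand side of \eqref{eq:dual-product-rule} over $\Int K$ yields \eqref{eq:dual-product-bound}.

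For \ref{item:C6}: here one must be careful about which topology the measure lives on. We interpret $\mu$ as a Radon measure on the compact Hausdorff space $(K,w^*)$. Then $C(K)$ is dense in $L^1(\mu)$ by regularity (Lusin/Urysohn). By \ref{item:C2} each continuous function is a uniform limit of elements of $\Cyl_Y(K)$, and uniform convergence implies $L^1(\mu)$ convergence since $\mu$ is finite. This gives the density.

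The only potential obstacle is this topological bookkeeping in \ref{item:C2} and \ref{item:C6}. Everything is meant with respect to the weak$^*$ topology, where $K$ is compact. If instead $\mu$ were given as a norm-Borel regular measure, we would need that norm-Borel and weak$^*$-Borel sets agree, or at least that $\mu$ is weak$^*$-Radon. This holds when $X$ is separable, which is the situation used later. In general we would state that "regular" refers to the weak$^*$ topology.
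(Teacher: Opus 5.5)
Your proof is correct and follows the paper's argument essentially item by item: (a) and (c) via Lemma~\ref{L:cyl-smooth}, (b) via Stone--Weierstrass, (d) via Lemma~\ref{lem:general-cylindrical-properties}, (e) via the product rule together with $\norm{\varphi}_\infty\le\rho\norm{\varphi}_{\mathrm{Lip}}$ and (c), and (f) via Luzin's theorem plus uniform density from (b). Your additional remarks spell out points the paper leaves implicit: that $K$ is norm-bounded by the uniform boundedness principle, and that regularity of $\mu$ should be read with respect to the weak$^*$ topology, which is harmless when $X$ is separable.
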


\begin{proof}
Assertion \ref{item:C1} follows from Lemma~\ref{L:cyl-smooth}\ref{it:chainRule}.

\ref{item:C2}: By the very definition of $\Cyl_Y(X)$ its elements are weak$^*$-continuous, hence $\Cyl_Y(K)\subset C(K)$. As stated in Lemma~\ref{lem:general-cylindrical-properties}, it is a unital subalgebra and, moreover, it separates points of $K$ (as $Y$ separates points of $X=Y^*$). We thus conclude by the Stone-Weierstrass theorem.

\ref{item:C3}: This follows from Lemma~\ref{L:cyl-smooth}\ref{it:convexFormulaBdd}.

\ref{item:C3.5}: This follows from Lemma~\ref{lem:general-cylindrical-properties}\ref{item:G3}.

\ref{item:C4}: Formula \eqref{eq:dual-product-rule} follows from the ordinary product rule for derivatives, see e.g. \cite[Corollary~1.116]{HajekJohanis}.
 Since $\varphi(0)=0$, we get
$\norm\varphi_\infty\leq\rho\norm{\varphi}_{\mathrm{Lip}}$.
Combining this with \eqref{eq:dual-cyl-norm} yields
\eqref{eq:dual-product-bound}.

\ref{item:C6} $C(K)$ is dense in $L^1(\mu)$ by the Luzin theorem. By \ref{item:C2} $\Cyl_Y(K)$ is uniformly dense in $C(K)$, so it is dense in $L^1(\mu)$ (as $C(K)$ embeds continuously into $L^1(\mu)$). 
 \end{proof}

\section{Complementability for bounded domains}\label{sec:bounded}

In this section we prove Theorem~\ref{thm:complement} in the case where $\Omega$ is additionally bounded. The line of this proof is similar to that in \cite{CKK}. Although the result holds only for Hilbert spaces, several steps work in a more general setting. 

The basic setting in this section is the following. Let $Y$ be a nontrivial Banach space and let $X=Y^*$. Assume that $X$ is separable (then $Y$ is necessarily separable as well). Assume that $K\subset X$ is a weak$^*$ compact convex set with $0$ in its norm-interior. Set $\Omega=\Int K$. Further, let $\Gamma$ be a non-degenerate Gaussian measure on $X$ and let $\gamma$ be the normalized restriction of $\Gamma$ to $\Omega$ (equivalently, to $K$, cf. 
Remark~\ref{rem:gauss}(2)). Further, let $S:L^1(\gamma;X)\to\F(\Omega)$ be the quotient operator provided by Theorem~\ref{thm:main}.

Next observe that the canonical inclusion $C(K,Y)\subset L^\infty_{w^*}(\gamma;X^*)$ is an isometric embedding (we consider $Y$ canonically embedded into $Y^{**}=X^*$). Hence, if $\Phi$ is the embedding from Lemma~\ref{lem:field-functional} (applied to the measure $\gamma$), then $\Phi|_{C(K,Y)}$ is an isometric embedding of $C(K,Y)$ into $(L^1(\gamma;X))^*$. We take the dual operator
$$R=(\Phi|_{C(K,Y)})^*:(L^1(\gamma;X))^{**}\to C(K,Y)^*=M(K,X).$$
By Proposition~\ref{prop:subspaces-quotients} $R$ is an isometric quotient. (Note that this operator is an analogue of the operator $\boldsymbol{R}$ from \cite{CKK}.)

The following lemma shows, in particular, a connection between $R$ and $S$.

\begin{lemma}\label{lem:dual-R-construction} The operator $R$ satisfies the following properties:
\begin{enumerate}[$(a)$]
    \item\label{it:Rformula} If $\g\in L^1(\gamma;X)$ is considered as an element of the bidual, then
    $R(\g)=\g\gamma$.
    \item\label{it:R**formula} $\pair{R\g^{**}}{\varphi'}=\pair{S^{**}\g^{**}}{\varphi}$ whenever $\g^{**}\in (L^1(\gamma;X))^{**}$ and $\varphi\in\mathscr \Cyl_Y^0(K)$.
\end{enumerate}
\end{lemma}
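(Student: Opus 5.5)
Both assertions come from unwinding the definitions $R=(\Phi|_{C(K,Y)})^*$ and $S^*=\Phi\circ D$ (Step~5 of the proof of Theorem~\ref{thm:main}). The only substantive point is that for $\varphi\in\Cyl_Y^0(K)$ the Gâteaux derivative of $\varphi|_\Omega$ is represented by a continuous $Y$-valued field on $K$.

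For \ref{it:Rformula}, I would fix $\g\in L^1(\gamma;X)$ and $\f\in C(K,Y)$ and compute
\[
\pair{R\g}{\f}=\pair{\g}{\Phi(\f)}=\int_K\pair{\f(x)}{\g(x)}\,d\gamma(x),
\]
using Lemma~\ref{lem:field-functional} and $\gamma(K\setminus\Omega)=0$ (Remark~\ref{rem:gauss}(2)). It then remains to identify the right-hand side with $\int_K\f\,d(\g\gamma)$. Here $\g\gamma$ denotes the $X$-valued measure $A\mapsto\int_A\g\,d\gamma$. It is $\sigma$-additive, has finite variation bounded by $\norm{\g}_1$, and is regular, since $\gamma$ is Radon and $|\g\gamma|\le\norm{\g}\gamma$.

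For the identification, I would first check simple Borel $\f$. There the integral against a vector measure is, by definition, $\sum_i\pair{y_i}{(\g\gamma)(A_i)}$. This equals $\int\pair{\f}{\g}\,d\gamma$ because the Bochner integral commutes with the functional $y_i\in X^*$. Both sides are continuous under uniform convergence, the right-hand side because $\g\in L^1$. Hence the identity passes to all $\f\in C(K,Y)$, and $R\g=\g\gamma$ under the Singer identification $C(K,Y)^*=M(K,X)$.

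For \ref{it:R**formula}, I would take $\varphi\in\Cyl_Y^0(K)$ and write $\varphi=\psi|_K$ with $\psi\in\Cyl_Y(X)$. By Lemma~\ref{lem:dual-cylindrical-calculus}\ref{item:C1}, the map $\varphi':=\psi'|_K$ lies in $C(K,Y)$, where $K$ carries the weak$^*$ topology. By Lemma~\ref{lem:dual-cylindrical-calculus}\ref{item:C3}, $\varphi|_\Omega\in\Lipz(\Omega)$, and at every point of $\Omega$ its Gâteaux derivative equals $\psi'(x)$, since $\Omega$ is open. Consequently
\[
D(\varphi|_\Omega)=\varphi'\quad\text{in }L^\infty_{w^*}(\gamma;X^*),
\]
so $\Phi(\varphi')=\Phi(D(\varphi|_\Omega))=S^*(\varphi|_\Omega)$.

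Then, for any $\g^{**}$,
\[
\pair{R\g^{**}}{\varphi'}=\pair{\g^{**}}{\Phi(\varphi')}=\pair{\g^{**}}{S^*(\varphi|_\Omega)}=\pair{S^{**}\g^{**}}{\varphi|_\Omega}.
\]
This is the claim, provided $\varphi$ is identified with $\varphi|_\Omega\in\Lipz(\Omega)=\F(\Omega)^*$. That identification is harmless because $\F(\Omega)=\F(K)$, as $\Omega$ is norm-dense in $K$.

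The only mild obstacle I expect is bookkeeping. One must make sure that the element of $C(K,Y)$ obtained from $\varphi$ coincides $\gamma$-a.e. with the Gâteaux derivative used in Step~1 of Theorem~\ref{thm:main}. One must also check that the boundary $K\setminus\Omega$ is $\gamma$-null, so that integrals over $K$ and over $\Omega$ agree. Both follow from the facts cited above.
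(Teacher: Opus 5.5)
Your proposal is correct and matches the paper's proof essentially step for step. For $(a)$ you dualize $R=(\Phi|_{C(K,Y)})^*$ and identify $\int_K\pair{\g}{\f}\,d\gamma$ with the pairing of $\g\gamma$ against $\f$; checking this on simple functions and passing to uniform limits is exactly how the paper justifies \eqref{eq:dual-vector-pairing}. For $(b)$ you use $S^*\varphi=\Phi(\varphi')$ together with the definition of the adjoint, just as the paper does.
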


\begin{proof}
\ref{it:Rformula}: Let $\f\in C(K,Y)$. Let us compute:
\[
 \pair{R\g}{\f} = \pair{\g}{\Phi(\f)}=\int_\Omega\pair{\g(x)}{\f(x)}\,d\gamma(x)=\int_\Omega \f\,d(\g\gamma)=\pair{\g\gamma}{\f},
\]
where the third equality follows easily from \eqref{eq:dual-vector-pairing}. This completes the argument.

\ref{it:R**formula}: Let $\g^{**}\in (L^1(\gamma;X))^{**}$ and $\varphi\in\Cyl_Y^0(K)$. For any $\g\in L^1(\gamma;X)$ we have
\[
 \pair{S^*\varphi}{\g}
 =\int_\Omega\pair{\g(x)}{\varphi'(x)}\,d\gamma(x)
 =\int_K\pair{\g(x)}{\varphi'(x)}\,d\gamma(x)
   =\pair{\Phi(\varphi')}{\g}.
\]
Thus $S^*\varphi=\Phi(\varphi')$ in $(L^1(\gamma;X))^*$. Consequently, the definition of $R$ yields
\[
 \pair{R\g^{**}}{\varphi'}
 =\pair{\g^{**}}{\Phi(\varphi')}
 =\pair{\g^{**}}{S^*\varphi}
 =\pair{S^{**}\g^{**}}{\varphi},
\]
which completes the proof.
\end{proof}

 It follows from Lemma~\ref{lem:dual-cylindrical-calculus}\ref{item:C3} that the assignment $\varphi\mapsto \varphi'$ is an isometric embedding of $\Cyl_Y^0(K)$ (considered as a subspace of $\Lipz(K)$) into $C(K,Y)$. Thus the dual mapping which we will denote by $\theta$ is an isometric quotient
map of $M(K,X)=C(K,Y)^*$ onto $(\Cyl_Y^0(K))^*$ (by Proposition~\ref{prop:subspaces-quotients}). It is given by
$$\theta_{\bmu}(\varphi)=\pair{\bmu}{\varphi'},\quad \varphi\in\Cyl_Y^0(K), \bmu\in M(K,X).$$
Let us call $\mu\in M(K,X)$ representable if $\theta_\mu$ is represented by an element of $\F(K)$, i.e., if $\theta_\mu=\kappa(x)|_{\Cyl_Y^0(K)}$ for some $x\in \F(K)$. Here $\kappa$ denotes the canonical embedding of $\F(K)$ into its bidual. Since $\Cyl_Y^0(K)$ is $1$-norming (by Lemma~\ref{lem:general-cylindrical-properties}\ref{item:G3}), the assignment
$x\mapsto \kappa(x)|_{\Cyl_Y^0(K)}$ is an isometric embedding, in particular its range is closed. It follows that the space of representable measures is closed and $\theta$ restricted to it may be interpreted as an isometric quotient of representable measures onto $\F(K)$. Denote the space of representable measures by $M_{\mathrm{rep}}(K;X)$ and the respective quotient map by $q$. 

\begin{lemma}\label{lem:fgammarepres}
   For each $\f\in L^1(\gamma,X)$ the measure $\f\gamma$ is representable and $q(\f\gamma)=S(\f)$.
\end{lemma}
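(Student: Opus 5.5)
The plan is to verify the defining identity of representability directly on the $1$-norming subspace $\Cyl_Y^0(K)$. We need an element of $\F(K)$ whose action on $\Cyl_Y^0(K)$ coincides with $\theta_{\f\gamma}$. The natural candidate is $S(\f)$. Here $S(\f)$ lies in $\F(\Omega)$, and we regard $\F(\Omega)$ as a subspace of $\F(K)$. Since $\Omega=\Int K$ is dense in $K$ (by convexity and nonempty interior), we in fact have $\F(\Omega)=\F(K)$ by the density remark in Section~\ref{subsec:lipschitz-free}.

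Fix $\varphi\in\Cyl_Y^0(K)$ and write $\varphi=\psi|_K$ with $\psi\in\Cyl_Y(X)$. By Lemma~\ref{lem:dual-R-construction}\ref{it:Rformula}, $\f\gamma=R(\f)$, where $\f$ is viewed in the bidual. Then Lemma~\ref{lem:dual-R-construction}\ref{it:R**formula} gives
\[
\theta_{\f\gamma}(\varphi)=\pair{R\f}{\varphi'}=\pair{S^{**}\f}{\varphi}=\pair{S\f}{\varphi},
\]
because $S^{**}$ restricted to $L^1(\gamma;X)$ is $S$. Two small identifications must be checked along the way.

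\begin{itemize}
\item The pairing $\pair{S\f}{\varphi}$ is the pairing of $S\f\in\F(\Omega)$ with $\varphi|_\Omega\in\Lipz(\Omega)$. This agrees with the pairing in $\F(K)$ against $\varphi\in\Lipz(K)$, since extension by continuity is compatible with the identification $\F(\Omega)\subset\F(K)$.
\item The derivative $\varphi'$ is understood as $\psi'|_K\in C(K,Y)$, as in Lemma~\ref{lem:dual-cylindrical-calculus}\ref{item:C1},\ref{item:C3}. It coincides with the Gâteaux derivative of $\varphi|_\Omega$ on $\Omega$, which is the object entering the formula for $S^*$.
\end{itemize}

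Alternatively, one can bypass Lemma~\ref{lem:dual-R-construction}\ref{it:R**formula} and compute directly:
\[
\theta_{\f\gamma}(\varphi)=\int_K\pair{\f(x)}{\psi'(x)}\,d\gamma(x)=\int_\Omega\pair{(\varphi|_\Omega)'(x)}{\f(x)}\,d\gamma(x)=S(\f)(\varphi|_\Omega).
\]
The first equality is \eqref{eq:dual-vector-pairing}, equivalently Lemma~\ref{lem:dual-R-construction}\ref{it:Rformula}. The second uses $\gamma(K\setminus\Omega)=0$ from Remark~\ref{rem:gauss}(2). The third is Theorem~\ref{thm:main}.

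Hence $\theta_{\f\gamma}=\kappa(S\f)|_{\Cyl_Y^0(K)}$, so $\f\gamma$ is representable. By definition of $q$, and since $x\mapsto\kappa(x)|_{\Cyl_Y^0(K)}$ is injective (Lemma~\ref{lem:dual-cylindrical-calculus}\ref{item:C3.5}), it follows that $q(\f\gamma)=S(\f)$.

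The main obstacle is bookkeeping rather than any real estimate. One must be careful about the identification $\F(\Omega)=\F(K)$, and about which derivative is meant on $K$ as opposed to $\Omega$, both of which are handled by the null-boundary remark.
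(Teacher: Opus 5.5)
Your proposal is correct and is essentially the paper's proof. The paper's proof is the one-line computation $\theta_{\f\gamma}(\varphi)=\pair{\f\gamma}{\varphi'}=\int_K\pair{\f(x)}{\varphi'(x)}\,d\gamma(x)=\pair{\varphi}{S(\f)}$ for $\varphi\in\Cyl_Y^0(K)$, which is exactly your ``alternative'' route; your primary route through Lemma~\ref{lem:dual-R-construction}$(a)$ and $(b)$ only repackages this identity, since part $(b)$ is itself proved from $S^*\varphi=\Phi(\varphi')$.
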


\begin{proof}
 Fix $\varphi\in\Cyl_Y^0(K)$ and compute using definitions:
 $$\begin{aligned}
   \theta_{\f\gamma}(\varphi)&=\pair{\f\gamma}{\varphi'}=\int_K\pair{\f(x)}{\varphi'(x)}\,d\gamma(x)=\pair{\varphi}{S(\f)}.  
 \end{aligned}$$
\end{proof}

\begin{proposition}\label{prop:dual-module}
If $\boldsymbol\mu\in M_{\mathrm{rep}}(K;X)$ and
$g\in L^1(|\boldsymbol\mu|)$, then
$g\boldsymbol\mu\in M_{\mathrm{rep}}(K;X)$.
Moreover, if $g\in\Cyl_Y(K)$ and
$\boldsymbol\mu=\uu|\boldsymbol\mu|$ with $\norm{\uu(x)}=1$
for $|\boldsymbol\mu|$-almost every $x\in K$, then
\begin{equation}\label{eq:dual-module-representative}
 q(g\boldsymbol\mu)
 =W_g(q\boldsymbol\mu)
  -\int_K\delta_K(x)\pair{\uu(x)}{g'(x)}\,d|\boldsymbol\mu|(x),
\end{equation}
where $W_g$ is the operator from Lemma~\ref{lem:free-weighting}.
\end{proposition}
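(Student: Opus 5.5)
The plan is to prove the explicit formula \eqref{eq:dual-module-representative} first for $g\in\Cyl_Y(K)$, and then obtain the general case $g\in L^1(|\bmu|)$ by density and closedness of $M_{\mathrm{rep}}(K;X)$.

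\emph{Step 1: $g\in\Cyl_Y(K)$.} Write $\xi=q\bmu\in\F(K)$, so that $\pair{\bmu}{\varphi'}=\pair{\varphi}{\xi}$ for every $\varphi\in\Cyl_Y^0(K)$. Denote by $\eta$ the right-hand side of \eqref{eq:dual-module-representative}. The Bochner integral there exists: $\delta_K$ is continuous and bounded on $K$, $g'$ is weak$^*$-to-norm continuous by Lemma~\ref{lem:dual-cylindrical-calculus}\ref{item:C1} (hence bounded on the weak$^*$ compact set $K$), and $\uu$ is $|\bmu|$-measurable with norm one. Also $W_g$ is defined by Lemma~\ref{lem:free-weighting}, since $g|_K$ is bounded and Lipschitz and $K$ is bounded. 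Fix $\varphi\in\Cyl_Y^0(K)$. By Lemma~\ref{lem:general-cylindrical-properties}\ref{item:G1}, $g\varphi\in\Cyl_Y^0(K)$. The product rule \eqref{eq:dual-product-rule}, extended from $\Int K$ to all of $K$ by continuity of the derivatives of the global cylindrical extensions, gives $(g\varphi)'=g\varphi'+\varphi g'$ on $K$. Hence
\[
\pair{g\bmu}{\varphi'}=\int_K g\pair{\uu}{\varphi'}\,d|\bmu|=\pair{\bmu}{(g\varphi)'}-\int_K\varphi(x)\pair{\uu(x)}{g'(x)}\,d|\bmu|(x).
\]
By representability of $\bmu$, the first term equals $\pair{g\varphi}{\xi}=\pair{W_g^*\varphi}{\xi}=\pair{\varphi}{W_g\xi}$. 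The second term equals $\pair{\varphi}{\int_K\delta_K\pair{\uu}{g'}\,d|\bmu|}$ because bounded functionals commute with Bochner integrals. Thus $\theta_{g\bmu}=\kappa(\eta)|_{\Cyl_Y^0(K)}$, which shows that $g\bmu$ is representable and $q(g\bmu)=\eta$. One small point to check is the meaning of $\pair{g\bmu}{\varphi'}$ via \eqref{eq:dual-vector-pairing}. Here the RNP assumption is harmless: the density $\uu$ with $\norm{\uu}=1$ is given in the statement, and the pairing of $g\bmu$ with a continuous field is $\int g\pair{\uu}{\cdot}\,d|\bmu|$.

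\emph{Step 2: general $g\in L^1(|\bmu|)$.} The measure $|\bmu|$ is a finite regular Borel measure on $K$, so by Lemma~\ref{lem:dual-cylindrical-calculus}\ref{item:C6} there are $g_n\in\Cyl_Y(K)$ with $g_n\to g$ in $L^1(|\bmu|)$. Then $\norm{g_n\bmu-g\bmu}_{M(K,X)}\le\norm{g_n-g}_{L^1(|\bmu|)}\to0$, since the total variation of $h\bmu$ is $|h|\,|\bmu|$. Each $g_n\bmu$ is representable by Step 1, and $M_{\mathrm{rep}}(K;X)$ is norm closed, as observed before Lemma~\ref{lem:fgammarepres}. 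Hence $g\bmu\in M_{\mathrm{rep}}(K;X)$.

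\emph{Main obstacle.} I expect the only delicate point to be the existence of the density $\uu$ used to write $g\bmu$ and the pairing for a general representable $\bmu$ in the first assertion. The remedy is that $X=Y^*$ is a separable dual and therefore has the RNP, so the Riesz--Singer discussion applies and provides $\uu$ with $\norm{\uu}=1$ $|\bmu|$-a.e. With that in hand, $g\bmu:=g\uu|\bmu|$ is a well-defined element of $M(K,X)$, and the rest is routine.
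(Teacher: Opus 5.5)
Your proposal is correct and follows the paper's proof essentially step by step: you first verify the formula for $g\in\Cyl_Y(K)$ using $W_g^*\varphi=g\varphi$ and the product rule, then pass to general $g\in L^1(|\bmu|)$ via density of $\Cyl_Y(K)$ (Lemma~\ref{lem:dual-cylindrical-calculus}\ref{item:C6}) and closedness of $M_{\mathrm{rep}}(K;X)$. The one detail the paper spells out that you only gesture at is the strong $|\bmu|$-measurability of $\delta_K$: it is norm-continuous, norm-Borel sets of $K$ are weak$^*$-Borel because $X$ is separable, and $\F(K)$ is separable.
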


\begin{proof}
Fix $\boldsymbol\mu\in\mathcal M_{\mathrm{rep}}(K;X)$ and write
$\boldsymbol\mu=\uu|\boldsymbol\mu|$ as in Section~\ref{ssec:vectormeasures} (recall that $X$ is a separable dual space). Fix $\rho>0$ such that $K\subset B(0,\rho)$.
First suppose that $g\in\Cyl_Y(K)$. Observe that the map $\delta_K$ is strongly
measurable for $|\boldsymbol\mu|$ (it is norm-to-norm continuous, thus weak$^*$-to norm Borel due to separability of $X=Y^*$ and hence strongly measurable due to separability of $\F(K)$). Since $\uu$ is strongly measurable as well
and $g'\in C(K;Y)$, the scalar function
$x\mapsto\pair{\uu(x)}{g'(x)}$ is measurable. Moreover,
\[
 \int_K\norm{\delta_K(x)}\,|\pair{\uu(x)}{g'(x)}|
       \,d|\boldsymbol\mu|(x)
 \leq\rho\norm{g'}_\infty\norm{\boldsymbol\mu}.
\]
Consequently, the Bochner integral in
\eqref{eq:dual-module-representative} exists in $\F(K)$.
Let $\xi$ denote the right-hand side of that formula.
For $\varphi\in\Cyl_Y^0(K)$, we have
$g\varphi\in\Cyl_Y^0(K)$ by Lemma~\ref{lem:general-cylindrical-properties}\ref{item:G1}.
Using $W_g^*\varphi=g\varphi$ (see Lemma~\ref{lem:free-weighting}) and the product rule in
Lemma~\ref{lem:dual-cylindrical-calculus}\ref{item:C4}, we obtain
\begin{align*}
 \pair{\xi}{\varphi}
 &=\pair{q\boldsymbol\mu}{g\varphi}
   -\int_K\varphi(x)\pair{\uu(x)}{g'(x)}\,d|\boldsymbol\mu|(x)\\
 &=\int_K\pair{\uu(x)}{(g\varphi)'(x)-\varphi(x)g'(x)}
           \,d|\boldsymbol\mu|(x)\\
 &=\int_K g(x)\pair{\uu(x)}{\varphi'(x)}\,d|\boldsymbol\mu|(x)
 =\theta_{g\boldsymbol\mu}(\varphi).
\end{align*}
Thus $g\boldsymbol\mu$ is represented by $\xi$, proving
\eqref{eq:dual-module-representative}.

The assignment $g\mapsto g\boldsymbol{\mu}$ is a bounded linear mapping from $L^1(\abs{\boldsymbol{\mu}})$ to $M(K,X)$. Hence the set of those $g$ for which $g\boldsymbol{\mu}$ is representable is closed. Since it contains $\Cyl_Y(K)$, it is also dense (by Lemma~\ref{lem:dual-cylindrical-calculus}\ref{item:C6}).
This completes the proof.
\end{proof}

For a finite positive Borel measure $\nu$ on $K$, put
\begin{equation}\label{eq:dual-Mnu}
 M_\nu
 =\{\g\in L^1(\nu;X):
       \g\nu\in M_{\mathrm{rep}}(K;X)\}.
\end{equation}

\begin{proposition}\label{prop:tangent-fields}
    Let $\nu$ be a finite positive Borel measure on $K$. Then there is a sequence $(\uu_n)$ of strongly measurable $X$-valued functions on $K$ such that for any $\g\in L^1(\nu;X)$ we have
    $$ \g\in M_\nu\iff \g(x)\in\overline{\linspan}\{\uu_n(x);n\in\N\}\mbox{ for $\nu$-almost all }x\in K.$$
\end{proposition}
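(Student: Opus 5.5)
The plan is to show first that $M_\nu$ is a closed linear subspace of $L^1(\nu;X)$ which is an $L^\infty(\nu)$-submodule, or better, is stable under multiplication by $L^1$-type scalars. Then I will invoke the standard description of closed submodules of Bochner spaces over a separable Banach space: they are exactly the spaces of sections of a measurable field of closed subspaces.

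\emph{Closed subspace.} The map $\g\mapsto\g\nu$ is a linear isometric embedding of $L^1(\nu;X)$ into $M(K,X)$, since $\abs{\g\nu}=\norm{\g(\cdot)}\nu$. The space $M_{\mathrm{rep}}(K;X)$ is closed and linear, as observed before Lemma~\ref{lem:fgammarepres}. Hence $M_\nu$, being its preimage, is a closed linear subspace.

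\emph{Module property.} Let $\g\in M_\nu$ and $h\in L^\infty(\nu)$, and put $\bmu=\g\nu$. Then $\abs{\bmu}=\norm{\g}\nu$, and $h\in L^1(\abs{\bmu})$ because $\abs{\bmu}$ is finite. Proposition~\ref{prop:dual-module} then gives $h\g\nu=h\bmu\in M_{\mathrm{rep}}(K;X)$, that is, $h\g\in M_\nu$. In particular $\mathbf 1_A\g\in M_\nu$ for every Borel set $A$.

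\emph{Construction of $(\uu_n)$.} $L^1(\nu;X)$ is separable, because $X$ is separable and $\nu$ is a finite Borel measure on the metrizable compact set $(K,w^*)$; the Borel structures coincide since $X$ is a separable dual. So I choose a countable dense set $\{\uu_n\}\subset M_\nu$ and fix strongly measurable representatives. Set $E(x)=\overline{\linspan}\{\uu_n(x)\}$.

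For ``$\Rightarrow$'', take $\g\in M_\nu$ and a subsequence $\uu_{n_k}\to\g$ in $L^1$. A further subsequence converges $\nu$-a.e., so $\g(x)\in E(x)$ for a.e.\ $x$.

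For ``$\Leftarrow$'', take $\g$ with $\g(x)\in E(x)$ a.e. I will approximate $\g$ by functions of the form $\sum_{j}\mathbf 1_{A_j}\sum_{n\le N} q_{n}\uu_n$, with countably many disjoint Borel sets $A_j$ and rational coefficients depending on $j$; these lie in $M_\nu$ by the module property together with closedness. Fix $\varepsilon>0$. For each finite rational tuple $(q_1,\dots,q_N)$, the set where $\norm{\g(x)-\sum q_n\uu_n(x)}<\varepsilon$ is measurable, and these countably many sets cover a.e.\ $x$. Disjointify them into sets $A_j$ with associated combinations $\vv_j$. The function $\tilde\g=\sum_j\mathbf 1_{A_j}\vv_j$ satisfies $\norm{\g-\tilde\g}<\varepsilon$ pointwise a.e.

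\emph{Main obstacle.} The delicate point is showing $\tilde\g\in L^1$ and $\tilde\g\in M_\nu$, since it is an infinite sum. Pointwise we have $\norm{\tilde\g}\le\norm{\g}+\varepsilon$, so $\tilde\g\in L^1(\nu;X)$. The partial sums $\sum_{j\le J}\mathbf 1_{A_j}\vv_j$ are finite sums of elements $\mathbf 1_{A_j}\vv_j\in M_\nu$, and they converge to $\tilde\g$ in $L^1$ by dominated convergence. By closedness, $\tilde\g\in M_\nu$. Letting $\varepsilon\to0$ gives $\g\in M_\nu$.
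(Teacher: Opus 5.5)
Your proof is correct. Its first two ingredients are the ones the paper uses. First, $M_\nu$ is closed in $L^1(\nu;X)$, being the preimage of the closed subspace $M_{\mathrm{rep}}(K;X)$ under the isometry $\g\mapsto\g\nu$. Second, Proposition~\ref{prop:dual-module} makes $M_\nu$ stable under multiplication by indicator functions, i.e.\ decomposable.

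You diverge from the paper only in the last step. The paper feeds these two facts into the Hiai--Umegaki characterization of closed decomposable subsets of Bochner spaces \cite[Theorem~3.1 and Lemma~1.1]{HiaiUmegaki}. This gives a sequence $(\uu_n)\subset M_\nu$ with $\g\in M_\nu$ iff $\g(x)\in\overline{\{\uu_n(x)\}}$ a.e., and the paper then passes to rational linear combinations to make the fibres linear. You instead prove that characterization by hand in the special case at hand, using that $M_\nu$ is a linear subspace of the separable space $L^1(\nu;X)$:
\begin{itemize}
\item a countable dense subset of $M_\nu$ serves as $(\uu_n)$;
\item the forward implication is just a.e.\ convergence of a subsequence;
\item the converse is your countable patching over the sets $A_j$, where the module property plus closedness do exactly the job that decomposability does in Hiai--Umegaki.
\end{itemize}

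Your route is self-contained and produces the closed linear span directly, so no extra linearization step is needed. It costs a few more lines and uses separability of $L^1(\nu;X)$, which holds here because $K$ is a separable metric space and so its Borel $\sigma$-algebra is countably generated. The paper's route is shorter and mirrors the finite-dimensional argument of \cite[Proposition~7.1]{CKK}. It relies on a theorem that handles general, not necessarily linear, closed decomposable sets and does not need the dense-sequence trick.
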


\begin{proof}
    This proposition is an analogue of \cite[Proposition 7.1]{CKK} and the proof is similar. 
   We first observe that $M_\nu$ satisfies the assumptions of \cite[Theorem~3.1]{HiaiUmegaki} (for $p=1$) -- it is enough to use that $M_\nu$ is closed in $L^1(\nu;X)$ and Proposition~\ref{prop:dual-module}. Hence, by combining  \cite[Theorem~3.1]{HiaiUmegaki} with
   \cite[Lemma~1.1]{HiaiUmegaki}, there is a sequence
$(\uu_j)\subset M_\nu$ such that
 $$\forall \g\in L^1(\nu;X)\colon \g\in M_\nu\iff \g(x)\in\overline{\{\uu_n(x);n\in\N\}}\mbox{ for $\nu$-almost all }x\in K.$$
Using the fact that $M_\nu$ is a linear subspace, we may easily achieve that $\overline{\{\uu_n(x);n\in\N\}}$ is a linear subspace for $\nu$-almost all $x\in K$ (cf. the proof of \cite[Proposition 7.1]{CKK}). This completes the proof.
\end{proof}

Recall that we aim at proving that $\F(\Omega)$ is sometimes complemented in $\F(\Omega)^{**}$. The already proved results reduce this task to finding a projection of $M(K;X)$ onto $M_{\mathrm{rep}}(K;X)$. Indeed, assume that $P$ is such a projection. Then $qPR$ is a bounded linear surjection of $(L^1(\gamma;X))^{**}$ onto $\F(K)$. The required projection may be found by a suitable factorization of this operator. This may be formulated and proved in a precise way.
However, as the existence of $P$ requires Hilbert space structure (by Theorem~\ref{thm:Intro1}), we do not formulate the abstract version. We only prove the Hilbert space version in Proposition~\ref{prop:dual-complementability-transfer} below.

In the rest of this section we assume that $X$ (and hence also $Y$) is a separable Hilbert space with the inner product $\pair{\cdot}{\cdot}$. We also use the standard representation of the dual of a Hilbert space using the inner product. Hilbert space structure allows us to use orthogonal projections. In order to use it properly we need the following parametrized version of the Gram--Schmidt procedure.

\begin{lemma}\label{lem:G-S}
    Let $(\Omega,\Sigma,\nu)$ be a finite measure space, let $H$ be a Hilbert space. Assume that
    $(\uu_n)$ is a sequence of strongly $\nu$-measurable $H$-valued functions. Then there is another sequence $(\e_n)$ of strongly $\nu$-measurable $H$-valued functions with the following properties:
    \begin{itemize}
        \item[$(i)$] $\linspan\{\e_n(t);n\in\N\}=\linspan\{\uu_n(t);n\in\N\}$ for $\nu$-almost all $t\in\Omega$;
        \item[$(ii)$] $\norm{\e_n(t)}\in\{0,1\}$ for each $n\in\N$ and $\nu$-almost all $t\in\Omega$;
        \item[$(iii)$] $\pair{\e_n(t)}{\e_m(t)}=0$ whenever $n\ne m$ for  $\nu$-almost all $t\in\Omega$.
    \end{itemize}
\end{lemma}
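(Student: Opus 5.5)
We run the classical Gram--Schmidt recursion pointwise in $t$, with the normalization step made measurable by a cutoff. For $n\in\N$ and $t\in\Omega$ put
\[
 \vv_n(t)=\uu_n(t)-\sum_{k<n}\pair{\uu_n(t)}{\e_k(t)}\e_k(t),
 \qquad
 \e_n(t)=\begin{cases}\vv_n(t)/\norm{\vv_n(t)},&\vv_n(t)\ne0,\\ 0,&\vv_n(t)=0.\end{cases}
\]

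We verify measurability by induction on $n$. Suppose $\e_1,\dots,\e_{n-1}$ are strongly measurable. The scalar functions $t\mapsto\pair{\uu_n(t)}{\e_k(t)}$ are then measurable: take simple functions converging a.e.\ to $\uu_n$ and to $\e_k$, note that their inner products are simple, and that these converge a.e.\ by continuity of the inner product. A measurable scalar times a strongly measurable vector function is strongly measurable, so $\vv_n$ is strongly measurable. The set $A_n=\{t:\vv_n(t)\ne0\}$ is measurable, since $\norm{\vv_n}$ is measurable. On $A_n$ the map $t\mapsto 1/\norm{\vv_n(t)}$ is measurable, hence $\e_n=\chi_{A_n}\vv_n/\norm{\vv_n}$ is strongly measurable. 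Equivalently, $\e_n$ is the composition of $\vv_n$ with the Borel map $N:H\to H$, $N(v)=v/\norm v$ for $v\ne0$ and $N(0)=0$. This map is continuous on $H\setminus\{0\}$, and composing a strongly measurable function with a Borel map into a separable range keeps it strongly measurable. Here we may reduce to a separable range by Pettis, since $\uu_n$ is a.e.\ separably valued.

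We first fix one $\nu$-null set $Z$ off which all $\uu_n$ are genuinely defined; it is a countable union of null sets. For $t\notin Z$ the vectors $(\e_n(t))$ are exactly the output of the ordinary Gram--Schmidt process applied to the sequence $(\uu_n(t))$, with zero vectors inserted whenever $\uu_n(t)$ already lies in $\linspan\{\uu_k(t):k<n\}$. The standard induction then gives, for every $n$, the following properties.
\begin{enumerate}[$(a)$]
\item $\linspan\{\e_k(t):k\le n\}=\linspan\{\uu_k(t):k\le n\}$.
\item $\norm{\e_n(t)}\in\{0,1\}$.
\item $\e_n(t)\perp\e_k(t)$ for $k<n$.
\end{enumerate}
For $(a)$: $\vv_n(t)-\uu_n(t)$ lies in the span of the earlier $\e_k(t)$, and $\vv_n(t)=0$ exactly when $\uu_n(t)$ belongs to that span. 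For $(c)$: by orthonormality of the nonzero earlier $\e_k(t)$ we get $\pair{\vv_n(t)}{\e_k(t)}=0$, and the zero ones are trivially orthogonal. Taking the union over $n$ in $(a)$ yields $(i)$, while $(b)$ and $(c)$ give $(ii)$ and $(iii)$.

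The only step requiring any care is the measurability of the normalization on the set where $\vv_n$ vanishes; the measurable set $A_n$ and the Borel map $N$ above handle it. Everything else is the finite-dimensional algebra applied pointwise.
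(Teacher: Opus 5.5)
Your proposal is correct and follows the paper's proof exactly: the same pointwise Gram--Schmidt recursion, with the zero vectors kept so that every $\e_n$ is defined on all of $\Omega$. The one addition is that you verify measurability inductively (measurability of $t\mapsto\pair{\uu_n(t)}{\e_k(t)}$, of the set $\{\vv_n\neq 0\}$, and of the normalization), which the paper leaves implicit.
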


\begin{proof}
    We apply a slight modification of the Gram--Schmidt procedure to the sequence $(\uu_n(t))$ for each fixed
$t$. More precisely, define recursively
\[
 \vv_n(t)=\uu_n(t)-\sum_{j=1}^{n-1}
                   \pair{\uu_n(t)}{\e_j(t)}\e_j(t),
 \qquad
 \e_n(t)=
 \begin{cases}
  \vv_n(t)/\norm{\vv_n(t)},&\vv_n(t)\ne0,\\
  0,&\vv_n(t)=0,
 \end{cases}
\]
where the sum is empty when $n=1$. Note that, unlike in the standard Gram--Schmidt procedure, we keep the zero vectors in the sequence, so that every $\e_n$
is defined on all of $\Omega$, even though the linear dependence
relations among the $\uu_j(t)$ may vary with $t$.
\end{proof}

\begin{lemma}
\label{lem:fibrewise-orthogonal-projections}
Assume that $X$ (and hence also $Y$) is a separable Hilbert space and $\nu$ is a finite positive Borel measure on $K$. Let $(\uu_n)$ be a sequence provided by Proposition~\ref{prop:tangent-fields}.
For $x\in K$ denote by $\pi(x)$ the orthogonal projection of $X$ onto $\overline{\linspan}\{\uu_n(x);n\in\N\}$. Then the map
\[
 (P_\nu \f)(x)=\pi(x)\f(x), \quad x\in K,\f\in L^1(\nu;X)
\]
is a well-defined contractive linear projection of $L^1(\nu;X)$ onto $M_\nu$.
\end{lemma}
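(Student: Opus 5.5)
We apply Lemma~\ref{lem:G-S} to the sequence $(\uu_n)$ from Proposition~\ref{prop:tangent-fields} and obtain strongly measurable fields $(\e_n)$ which are, fibrewise, an orthonormal system (with zeros allowed) spanning the same subspace. The fibrewise orthogonal projection then becomes
\[
 \pi(x)h=\sum_{n}\pair{h}{\e_n(x)}\e_n(x),\qquad h\in X,
\]
for $\nu$-almost every $x\in K$, the series converging in norm by Bessel's inequality.

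\textbf{Measurability and well-definedness.} For $\f\in L^1(\nu;X)$, each term $x\mapsto\pair{\f(x)}{\e_n(x)}\e_n(x)$ is strongly measurable, because scalar products and products of strongly measurable functions are strongly measurable. The partial sums converge pointwise almost everywhere to $\pi(x)\f(x)$, so $P_\nu\f$ is strongly measurable. Moreover $\norm{\pi(x)\f(x)}\le\norm{\f(x)}$, hence $P_\nu\f\in L^1(\nu;X)$ with $\norm{P_\nu\f}_1\le\norm{\f}_1$. Changing $\f$ on a null set changes $P_\nu\f$ only on a null set, so the map is well defined on equivalence classes. Linearity is clear since each $\pi(x)$ is linear.

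\textbf{Range and projection property.} By construction, $(P_\nu\f)(x)\in\overline{\linspan}\{\uu_n(x)\}$ almost everywhere, so Proposition~\ref{prop:tangent-fields} gives $P_\nu\f\in M_\nu$. Conversely, if $\g\in M_\nu$, the same proposition gives $\g(x)\in\overline{\linspan}\{\uu_n(x)\}$ almost everywhere, hence $\pi(x)\g(x)=\g(x)$ and $P_\nu\g=\g$. Therefore $P_\nu$ is a contractive projection onto $M_\nu$.

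The only subtle point is the measurability of $x\mapsto\pi(x)\f(x)$. It is exactly why the parametrized Gram--Schmidt of Lemma~\ref{lem:G-S} is needed, rather than an abstract choice of projection in each fibre. Once that lemma's measurability claims are taken as given, nothing else in the argument is delicate.
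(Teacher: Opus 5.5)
Your proposal is correct and follows essentially the same route as the paper: you expand $\pi(x)$ as a series using the parametrized Gram--Schmidt fields from Lemma~\ref{lem:G-S}, deduce strong measurability and contractivity, and read off both the range and the projection property from Proposition~\ref{prop:tangent-fields}. You are, if anything, slightly more explicit than the paper about the almost-everywhere qualifications and about the identity $P_\nu\g=\g$ for $\g\in M_\nu$.
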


\begin{proof}
    Let $(\e_n)$ be the sequence provided by Lemma~\ref{lem:G-S}. Then we have, for each $\f\in L^1(\nu;X)$, 
    \[
 \pi(x)\f(x)=\sum_{j=1}^{\infty}
             \pair{\f(x)}{\e_j(x)}\e_j(x)
 \qquad x\in K,
\]
where the series converges in the norm of $X$. It easily follows that $x\mapsto \pi(x)\f(x)$ is a well-defined strongly measurable function. Since the orthogonal projection is contractive, it is also Bochner integrable.
Thus $P_\nu$ is a well-defined mapping of $L^1(\nu;X)$ into $L^1(\nu;X)$. It is clearly linear; by the above it is contractive. Moreover, it follows from the characterization from Proposition~\ref{prop:tangent-fields} that $P_\nu$ is a projection with range  $M_\nu$.  
\end{proof}

\begin{proposition}\label{prop:glue-Pnu}
Assume that $X$ is a separable Hilbert space. For $\bmu\in M(K,X)$ such that $\bmu=\g\nu$ for some $\nu\ge0$ and $\g\in L^1(\nu;X)$ set $$P \bmu=(P_\nu\g)\nu.$$
Then $P$ is a well-defined contractive linear projection of $M(K,X)$ onto $M_{\mathrm{rep}}(K;X)$.   
\end{proposition}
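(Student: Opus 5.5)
The argument has three stages. First, $P\bmu$ does not depend on the chosen factorization. Second, $P$ is linear, contractive and idempotent. Third, its range is exactly $M_{\mathrm{rep}}(K;X)$.

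Every $\bmu\in M(K,X)$ admits the factorization $\bmu=\uu|\bmu|$ with $\norm{\uu}=1$ $|\bmu|$-a.e. (Section~\ref{ssec:vectormeasures}), so $P$ is defined everywhere. To prove independence, let $\bmu=\g_1\nu_1=\g_2\nu_2$. Put $\lambda=\nu_1+\nu_2$ and write $\nu_i=h_i\lambda$ with $h_i\ge0$. Then $\g_1h_1=\g_2h_2$ $\lambda$-a.e., and it suffices to prove
\[
 (P_{\nu_i}\g_i)\nu_i=(P_\lambda(\g_ih_i))\lambda \qquad (i=1,2).
\]
This reduces everything to one comparison. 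If $\nu=h\lambda$ and $\g\in L^1(\nu;X)$, then $(P_\nu\g)\nu=(P_\lambda(h\g))\lambda$.

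For this comparison I compare the fibre spaces. Let $E^\nu(x)$ and $E^\lambda(x)$ be the closed spans from Proposition~\ref{prop:tangent-fields} for $\nu$ and $\lambda$. I claim $E^\nu(x)=E^\lambda(x)$ for $\lambda$-a.e. $x$ with $h(x)>0$. The key identity is, for $\f\in L^1(\nu;X)$ and $w\ge0$ with $w\f\in L^1$,
\[
 \f\nu=(h\f)\lambda .
\]
Hence $\f\in M_\nu$ if and only if $h\f\in M_\lambda$. Proposition~\ref{prop:dual-module} shows that $M_\nu$ and $M_\lambda$ are stable under multiplication by bounded, indeed by suitable $L^1$, scalar functions. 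Together with the a.e. characterization of Proposition~\ref{prop:tangent-fields}, this gives the fibre identification on $\{h>0\}$. Concretely:
\begin{itemize}
 \item take the fields $\uu_n$ for $\nu$;
 \item truncate them by indicator functions of the sets $\{\norm{\uu_n}\le k,\ h\le k\}$;
 \item these truncations lie in $M_\nu$, so their products with $h$ lie in $M_\lambda$;
 \item therefore $E^\nu\subset E^\lambda$ a.e. on $\{h>0\}$, and symmetrically $E^\lambda\subset E^\nu$ using $1/h$ truncated.
\end{itemize}
Since the orthogonal projection $\pi(x)$ depends only on $E(x)$, we get $h\,P_\nu\g=P_\lambda(h\g)$ $\lambda$-a.e. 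This yields the displayed equality. I expect this measure-theoretic bookkeeping, in particular the integrability conditions in the truncations, to be the main technical point.

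Linearity, contractivity and the projection property then follow quickly. For linearity, given $\bmu_1,\bmu_2$, dominate both by a single $\lambda$ (for instance $|\bmu_1|+|\bmu_2|$) and use linearity of $P_\lambda$. For contractivity,
\[
 \norm{P\bmu}=\norm{P_\nu\g}_{L^1(\nu;X)}\le\norm{\g}_{L^1(\nu;X)}=\norm{\bmu},
\]
since the total variation of $\f\nu$ equals $\norm{\f}_{L^1(\nu;X)}$. For the range, $P_\nu\g\in M_\nu$ means $(P_\nu\g)\nu\in M_{\mathrm{rep}}$, so $P$ maps into representable measures. Conversely, if $\bmu=\uu|\bmu|$ is representable, then $\uu\in M_{|\bmu|}$, so $P_{|\bmu|}\uu=\uu$ by Lemma~\ref{lem:fibrewise-orthogonal-projections}. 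Thus $P\bmu=\bmu$, which shows that $P$ is idempotent with range $M_{\mathrm{rep}}(K;X)$.
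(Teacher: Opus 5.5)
Your proposal is correct and follows essentially the same route as the paper, which refers to the proofs of Lemma~7.3 and Proposition~7.4 in \cite{CKK}. For $\nu=h\lambda$ the fibre spaces of $\nu$ and $\lambda$ agree $\lambda$-almost everywhere on $\{h>0\}$, by the characterization in Proposition~\ref{prop:tangent-fields} together with the identity $\f\nu=(h\f)\lambda$. This gives independence of the factorization via a common dominating measure, and linearity, contractivity and the projection property then follow as you describe.

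Two small points: the weight $w$ in your ``key identity'' is a stray leftover, and the truncation argument only needs the characterization from Proposition~\ref{prop:tangent-fields}, not Proposition~\ref{prop:dual-module} directly.
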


\begin{proof}
    The proof may be done by copying the proofs of Lemma 7.3 and Proposition 7.4 from \cite{CKK}, up to adapting a bit the notation: It is enough to replace $\mathbb R^d$ by $K$, $E$ by $X$, $M_\sigma(\mathbb R^d,E)$ by $M(K,X)$, $M_\sigma^{\mathrm{div}}(E)$ by $M_{\mathrm{rep}}(K;X)$, and $T_\nu(x)$ by $\overline{\linspan}\{\uu_n(x):n\in\N\}$, where $(\uu_n)$ is provided by Proposition~\ref{prop:tangent-fields}. Further,
    Proposition~\ref{prop:tangent-fields} should be applied in place of \cite[Proposition~7.1]{CKK} and Lemma~\ref{lem:fibrewise-orthogonal-projections} in place of \cite[Proposition~7.2]{CKK}.
\end{proof}

\begin{proposition}
\label{prop:dual-complementability-transfer}
Assume that $X$ is a separable Hilbert space.
 The formula
\begin{equation}\label{eq:dual-QP-factorization}
 Q(S^{**}\g^{**})=qPR\g^{**},\quad \g^{**}\in (L^1(\gamma;X))^{**}.
\end{equation}
defines a contractive linear projection $Q:\F(K)^{**}\to\F(K)$.
Moreover, if $\xi^{**}\in\F(K)^{**}$ and
$\boldsymbol\mu\in\mathcal M(K;X)$ satisfy $\theta_{\bmu}=\xi^{**}|_{\Cyl_Y^0(K)}$, then
\begin{equation}\label{eq:dual-QP-representative}
 Q\xi^{**}=qP\boldsymbol\mu.
\end{equation}
\end{proposition}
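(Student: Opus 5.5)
The plan is to show that $Q$ is well defined, that it maps into $\F(K)$ contractively, that it fixes $\F(K)$, and finally to derive \eqref{eq:dual-QP-representative}. Throughout we identify $\F(K)=\F(\Omega)$, using density of $\Omega$ in $K$.

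First, $S^{**}$ is onto $\F(K)^{**}$. Indeed, $S$ is an isometric quotient, so $S^*$ is an isometric embedding and $S^{**}$ is again an isometric quotient. Hence every $\xi^{**}$ has the form $S^{**}\g^{**}$ with $\norm{\g^{**}}$ arbitrarily close to $\norm{\xi^{**}}$.

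The key observation is the following. Suppose $\xi^{**}=S^{**}\g^{**}$. By Lemma~\ref{lem:dual-R-construction}\ref{it:R**formula}, for $\varphi\in\Cyl_Y^0(K)$,
\[
\theta_{R\g^{**}}(\varphi)=\pair{R\g^{**}}{\varphi'}=\pair{\xi^{**}}{\varphi}.
\]
So $\bmu=R\g^{**}$ satisfies $\theta_{\bmu}=\xi^{**}|_{\Cyl_Y^0(K)}$.

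It therefore suffices to prove one claim: $qP\bmu$ depends only on $\theta_{\bmu}$, that is, $\theta_{\bmu}=0$ implies $qP\bmu=0$. This claim gives well-definedness of $Q$ and, at the same time, \eqref{eq:dual-QP-representative} for arbitrary $\bmu$ with $\theta_{\bmu}=\xi^{**}|_{\Cyl_Y^0(K)}$.

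To prove the claim I would use the orthogonal structure. Write $\bmu=\g\nu$ and $P\bmu=(P_\nu\g)\nu$. Then $\bmu-P\bmu=(\g-P_\nu\g)\nu$, where $\g-P_\nu\g$ is fibrewise orthogonal to the tangent spaces $T_\nu(x)=\overline{\linspan}\{\uu_n(x)\}$. I expect $\theta$ to annihilate every such measure, mirroring \cite{CKK} (Lemma 7.3 / Prop.~7.4 there). The intended argument: for $\varphi\in\Cyl_Y^0(K)$, the field $\varphi'$ lies in $C(K,Y)$ with $Y=X$. The point to verify is that $\varphi'(x)\in T_\nu(x)$ for $\nu$-a.e.\ $x$ — more precisely, that the pairing $\int\pair{\vv}{\varphi'}\,d\nu$ vanishes whenever $\vv\perp T_\nu$ fibrewise. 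One would get this from the duality between $M_\nu$ and its annihilator via the Hiai--Umegaki description. Concretely, $\vv\nu$ with $\vv\perp T_\nu$ pairs to zero against all of $M_\nu$, and by the Hilbert structure $\theta$ restricted to measures based on $\nu$ factors through $P_\nu$. This is exactly the step in \cite{CKK} whose proof is copied in Proposition~\ref{prop:glue-Pnu}. It is the main obstacle, and I would import it from that adaptation: $P$ is built so that $\theta_{P\bmu}=\theta_{\bmu}$. If that identity holds, the claim follows at once: $\theta_{\bmu}=0$ gives $\theta_{P\bmu}=0$, and since $q$ is an isometry on $M_{\mathrm{rep}}$ modulo $\ker\theta$, we get $qP\bmu=0$.

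Granting $\theta_{P\bmu}=\theta_{\bmu}$, the remaining properties follow quickly.

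\emph{Contractivity.} We have $\norm{Q\xi^{**}}\le\norm{q}\norm{P}\norm{R}\norm{\g^{**}}\le\norm{\g^{**}}$. Taking the infimum over lifts $\g^{**}$ of $\xi^{**}$ gives $\norm{Q}\le1$.

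\emph{Projection.} For $\xi=S\g\in\F(K)$ with $\g\in L^1(\gamma;X)$, choose $\bmu=R\g=\g\gamma$, using Lemma~\ref{lem:dual-R-construction}\ref{it:Rformula}. This measure is representable by Lemma~\ref{lem:fgammarepres}, so $P\bmu=\bmu$ and $Q\xi=q(\g\gamma)=S\g=\xi$.

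\emph{Linearity.} This follows from the linearity of $q$, $P$ and $R$ together with well-definedness.
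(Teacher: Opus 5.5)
Your reduction matches the paper. You lift $\xi^{**}=S^{**}\g^{**}$, use Lemma~\ref{lem:dual-R-construction}$(b)$ to get $\theta_{R\g^{**}}=\xi^{**}|_{\Cyl_Y^0(K)}$, and reduce both well-definedness and the representative formula to the implication $\theta_{\bmu}=0\Rightarrow qP\bmu=0$. Your contractivity estimate and your check that $Q$ fixes $\F(K)$ are also fine. The gap is in how you propose to prove that implication. The identity $\theta_{P\bmu}=\theta_{\bmu}$ for all $\bmu\in M(K,X)$, which you call the main obstacle and want to import from \cite{CKK}, is false. So is the heuristic that $\varphi'(x)\in T_\nu(x)$ for $\nu$-a.e.\ $x$. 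Here is a counterexample. Take $x_0\in\Omega$ and $0\ne u\in X$, put $t_0=\pair{x_0}{u}$, and let $\varphi_n(x)=\frac1n\sin\bigl(n(\pair{x}{u}-t_0)\bigr)+\frac1n\sin(nt_0)$. Then $\varphi_n|_K\in\Cyl_Y^0(K)$, $\norm{\varphi_n|_K}_{\Lip}\le\norm{u}$ and $\norm{\varphi_n}_\infty\le 2/n$, so $\varphi_n|_K\to0$ weak$^*$ in $\Lipz(K)$. On the other hand, $\theta_{u\delta_{x_0}}(\varphi_n)=\norm{u}^2$ for every $n$. Hence $u\delta_{x_0}$ is not representable for any $u\ne0$, i.e.\ $M_{\delta_{x_0}}=\{0\}$. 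For $\bmu=u\delta_{x_0}$ you therefore get $P\bmu=0$ while $\theta_{\bmu}\ne0$. More globally, $\theta\circ P=\theta$ would make $\F(K)$ isometric to the separable dual $(\Cyl_Y^0(K))^*$. That is impossible, since $\F(K)$ contains $L^1$ and separable duals have the Radon--Nikod\'ym property. The fibrewise orthogonal structure plays no role in this step.

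What you actually need is much weaker and immediate. If $\theta_{\bmu}=0=\kappa(0)|_{\Cyl_Y^0(K)}$, then $\bmu$ is itself representable (by $0$), so $\bmu\in M_{\mathrm{rep}}(K;X)$. Since $P$ is a projection onto $M_{\mathrm{rep}}(K;X)$, $P\bmu=\bmu$. Also $q\bmu=0$, because $q\bmu$ is the unique element of $\F(K)$representing $\theta_{\bmu}=0$. This is exactly the paper's argument. Applied to $R\g^{**}$ with $\g^{**}\in\ker S^{**}$, it gives $\ker S^{**}\subset\ker(qPR)$. Applied to $\bmu-R\g^{**}$, it gives $Q\xi^{**}=qP\bmu$. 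With this replacement your proof is complete. The only property of $P$ used is that it is a contractive projection onto $M_{\mathrm{rep}}(K;X)$, which is where the Hilbert structure already entered (Proposition~\ref{prop:glue-Pnu}).
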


\begin{proof}
We start by noting that $S^{**}$ is an isometric quotient of $(L^1(\gamma;X))^{**}$ onto $\F(K)^{**}$ (by Proposition~\ref{prop:subspaces-quotients}), hence any element of $\F(K)^{**}$ is of the form $S^{**}\g^{**}$.

Set $Q_0=qPR$. Then $Q_0$ is an isometric quotient of $(L^1(\gamma;X))^{**}$ onto $\F(K)$ (being a composition of three isometric quotients). Next observe that $\ker S^{**}\subset\ker Q_0$. Indeed, let $\g^{**}\in\ker S^{**}$. By Lemma~\ref{lem:dual-R-construction}$(b)$ we get
$$\pair{R\g^{**}}{\varphi'}=0,\quad \varphi\in\Cyl_Y^0(K).$$
Thus $\theta_{R\g^{**}}=0$. It follows that $R\g^{**}$ is representable (hence $PR\g^{**}=R\g^{**}$) and $qR\g^{**}=0$. So, $Q_0\g^{**}=0$ and the inclusion $\ker S^{**}\subset \ker Q_0$ follows. Therefore, $Q$ is a well-defined isometric quotient. 

To prove that $Q$ is a projection, it remains to show that it fixes $\F(K)$. Let $\xi\in\F(K)$ and choose $\g\in L^1(\gamma;X)$ with $S\g=\xi$.
Identifying $L^1(\gamma;X)$ and $\F(K)$ with their canonical copies in their
biduals, we get
\[
 Q\xi
 =QS^{**}\g
 =qPR\g
 =qP(\g\gamma)
 =q(\g\gamma)
 =S\g=\xi.
\]
Here the third equality follows from Lemma~\ref{lem:dual-R-construction}$(a)$ and the following two equalities follow from Lemma~\ref{lem:fgammarepres}. This completes the argument.

Finally, let $\xi^{**}$ and $\boldsymbol\mu$ satisfy the additional
hypothesis. Choose any $\g^{**}\in (L^1(\gamma;X))^{**}$ with $S^{**}\g^{**}=\xi^{**}$. It follows from Lemma~\ref{lem:dual-R-construction}$(b)$ that $\theta_{\bmu}=\theta_{R\g^{**}}$, thus $\theta_{\bmu-R\g^{**}}=0$, so $\bmu-R\g^{**}$ is representable and $q(\bmu-R\g^{**})=0$. Thus $P(\bmu-R\g^{**})=\bmu-R\g^{**}$, so
$$Q\xi^{**}-qP\bmu=QS^{**}\g^{**}-qP\bmu=qP(R\g^{**}-\bmu)=q(R\g^{**}-\bmu)=0,$$
which completes the proof.
\end{proof}

\begin{remark}\label{rem:dim-bounded}
The previous proposition is formulated and proved for bounded open convex subsets of a Hilbert space. As noted in the introduction, the same result may be proved for bounded open convex subsets of a general Banach space of dimension at most $2$. If the dimension is at most one, then the space is isometric to a Hilbert space, hence the only new part is the statement on two-dimensional Banach spaces. So, let us indicate the necessary changes.

Assume that $X$ is a Banach space of dimension $2$ and $\Omega\subset X$ is a bounded open convex set containing $0$. Then Theorem~\ref{thm:main}, results from Section~\ref{sec:norming-smooth} and from the first part of Section~\ref{sec:bounded} (till Proposition~\ref{prop:tangent-fields}) work, as they are valid for general separable dual spaces. At this point we apply, instead of orthogonal projections, the ideas from \cite{BGLMPS26b}: 

Since $\dim X=2$, any subspace of $X$ is $1$-complemented. Let $\widetilde{P}$ be the Borel mapping provided by \cite[Proposition 4.4]{BGLMPS26b} which assigns to each subspace $F\subset X$ a norm-one projection $\widetilde{P}(F):X\to F$. Given a finite positive measure $\nu$ on $K=\overline{\Omega}$, define projection $P_\nu$ (following the line of the proof of \cite[Theorem 4.11]{BGLMPS26b}) as in Lemma~\ref{lem:fibrewise-orthogonal-projections},
just $\pi(x)$, instead being the orthogonal projection, equals $\widetilde{P}(\linspan\{\uu_n(x);n\in\N\})$. These projections may be glued to a norm-one projection $P:M(K,X)\to M_{\mathrm{rep}}(K,X)$ as in \cite[Theorem 4.11]{BGLMPS26b} (with the analogous changes to those indicated in the proof of Proposition~\ref{prop:glue-Pnu}). We conclude by repeating the proof of Proposition~\ref{prop:dual-complementability-transfer}.
    
\end{remark}

\section{Gluing principle for projections}\label{sec:gluing}

In this section we extend the result from the previous section to general, possibly unbounded,
open convex sets in a separable Hilbert space. This will complete the proof of Theorem~\ref{thm:complement}. The main tool is an abstract principle of gluing projections which is valid for general metric spaces. It is contained in the following theorem.

\begin{theorem}\label{thm:fromBallToTheWholeSpace}
Let $(M,d,0)$ be a complete pointed metric space.  For $0<r<s$, let $\iota_{r,s}:\F(B(0,r))\longrightarrow\F(B(0,s))$ be the canonical isometric inclusion. Suppose that, for every $r>0$, there is a bounded linear projection $Q_r:\F(B(0,r))^{**}\longrightarrow\F(B(0,r))$. Assume $C:=\sup_{r>0}\norm{Q_r}<\infty$ and assume the operators $Q_r$ are compatible in the sense that
\begin{equation}\label{eq:metric-ball-coherent-retractions}
 Q_s\iota_{r,s}^{**}=\iota_{r,s}Q_r
 \qquad\mbox{ for }0<r<s.
\end{equation}
Then $\F(M)$ is $C$-complemented in its bidual.
\end{theorem}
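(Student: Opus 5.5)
The plan is to build the projection $Q:\F(M)^{**}\to\F(M)$ by localizing an element of the bidual to balls with weighting operators, applying $Q_r$ there, and passing to a limit. Fix radii $r_n=2^n$ and Lipschitz cut-off functions $g_n:M\to[0,1]$ with $g_n=1$ on $B(0,r_n)$, $g_n=0$ outside $B(0,2r_n)$, and $g_n(x)=\min\{1,\max\{0,2-d(x,0)/r_n\}\}$. Then $\norm{g_n}_{\Lip}\le 1/r_n$, so by Lemma~\ref{lem:free-weighting} we have $\norm{W_{g_n}}\le 1+2r_n\cdot r_n^{-1}=3$ and $W_{g_n}(\F(M))\subset\F(B(0,2r_n))$. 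This uniform bound alone would only give a constant like $3C$. To get exactly $C$, I will avoid relying on the norms of the $W_{g_n}$ in the final estimate and use them only to prove convergence.

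\textbf{Step 1.} Given $\xi^{**}\in\F(M)^{**}$, consider $W_{g_n}^{**}\xi^{**}$. It lies in $\F(B(0,2r_n))^{**}$, viewed inside $\F(M)^{**}$ via $\iota^{**}$, which is injective since $\iota$ is an isometric embedding. Define $\eta_n=Q_{2r_n}(W_{g_n}^{**}\xi^{**})\in\F(B(0,2r_n))\subset\F(M)$. Compatibility \eqref{eq:metric-ball-coherent-retractions} lets us regard all $Q_s$ as restrictions of one operator defined on $\bigcup_s\iota^{**}\F(B(0,s))^{**}$, with norm at most $C$.

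\textbf{Step 2.} Show that $(\eta_n)$ converges, or at least converges weakly in a suitable sense, and define $Q\xi^{**}$ as the limit. A cleaner route is to avoid weightings and define $Q$ directly. Let $\mathcal{D}=\bigcup_r\iota_r^{**}\F(B(0,r))^{**}\subset\F(M)^{**}$, and let $Q_\infty$ be the common extension on $\mathcal D$ given by compatibility, with $\norm{Q_\infty}\le C$. Since $M$ is complete, $\bigcup_r\F(B(0,r))$ is norm dense in $\F(M)$, so $Q_\infty$ is the identity on a dense subset of $\F(M)$. It then remains to extend $Q_\infty$ to all of $\F(M)^{**}$ with norm at most $C$. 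Here $\mathcal D$ need not be norm dense, so I plan to use a Banach-limit or weak$^*$-compactness argument. Set $Q\xi^{**}=w^*\text{-}\lim_{\mathcal U}Q_\infty(W_{g_n}^{**}\xi^{**})$ along an ultrafilter in $\F(M)^{**}$, and then show the limit lies in $\F(M)$ and has norm at most $C\norm{\xi^{**}}$.

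\textbf{Main obstacle.} The norm control is the hard part, since the $W_{g_n}$ have norm up to $3$, not $1$. To handle it, I would replace $g_n$ by slowly varying cut-offs, $g_n=1$ on $B(0,r_n)$ decaying to $0$ on $B(0,Nr_n)$, giving $\norm{W_{g_n}}\le 1+Nr_n\cdot\frac1{(N-1)r_n}\to 2$. That still does not reach $1$. Better is to note that for $f\in\Lipz(M)$ the difference $f-g_nf$ is small on $B(0,r_n)$, and to test $\pair{f}{Q\xi^{**}}$ only against $f$ supported boundedly. I would then show $\pair{f}{Q_\infty W_{g_n}^{**}\xi^{**}}$ stabilizes for large $n$: compatibility shows it equals $\pair{f}{Q_s\iota^{**}(\cdot)}$, which is independent of $n$ once $\supp f\subset B(0,r_n)$. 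This yields a functional on a weak$^*$-dense set of $f$ of norm at most $C\norm{\xi^{**}}$, via a Ces\`aro average of cut-offs giving norm $1+o(1)$. The functional is then represented by an element of $\F(M)$ through a weak$^*$-continuity check on bounded sets. I expect this averaging estimate to be the crux.
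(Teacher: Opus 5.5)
\textbf{There is a genuine gap, and it is in convergence, not in the norm estimate.} You never show that $\eta_n=Q_\infty(W_{g_n}^{**}\xi^{**})$ converges in $\F(M)$.

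An ultrafilter weak$^*$ limit of $(\eta_n)$ is only an element of $\F(M)^{**}$. Weak$^*$ lower semicontinuity bounds its norm, but says nothing about membership in $\F(M)$. So ``show the limit lies in $\F(M)$'' and the final ``weak$^*$-continuity check on bounded sets'' are precisely what has to be proved.

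The mechanism you offer for this does not work. \eqref{eq:metric-ball-coherent-retractions} only says that $Q_s$ agrees with $Q_r$ on the canonical copy of $\F(B(0,r))^{**}$. It gives no information on how $Q_s$ acts on elements such as $W_{g_m-g_n}^{**}\xi^{**}$, which are carried by an annulus, and $Q_s$ need not be local. For instance, take $e\in\F(B(0,r_0))$ and $\Psi\in\F(M)^{***}$ vanishing on the canonical copy of $\F(B(0,r_0))^{**}$. For $s\ge r_0$, replace $Q_s$ by $\zeta\mapsto Q_s\zeta+\Psi(\zeta-Q_s\zeta)\,e$, with $\zeta-Q_s\zeta$ viewed in $\F(M)^{**}$. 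This new family is still one of uniformly bounded, compatible projections. Yet $\pair{f}{Q_s\zeta}$ can now be nonzero for $f$ supported near $0$ and $\zeta$ carried far away. So $\pair{f}{\eta_n}$ does not stabilize in general, and nothing in your proposal produces a limit in $\F(M)$.

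\textbf{The missing ingredient is norm convergence of $(W_{g_n}^{**}\xi^{**})_n$ in $\F(M)^{**}$, for every $\xi^{**}$.} The paper takes this from \cite[Section~3.1]{AliagaPernecka}; your $g_n$ are exactly the paper's $w_n$. Once you have it, your $Q_\infty$ gives
$\norm{\eta_m-\eta_n}=\norm{Q_\infty(W_{g_m}^{**}\xi^{**}-W_{g_n}^{**}\xi^{**})}\le C\norm{W_{g_m}^{**}\xi^{**}-W_{g_n}^{**}\xi^{**}}$.
Hence $(\eta_n)$ is norm Cauchy and $Q\xi^{**}:=\lim_n\eta_n$ lies in $\F(M)$. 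Also $Q$ fixes finitely supported elements, and $\norm{Q\xi^{**}}\le C\lim_n\norm{W_{g_n}^{**}\xi^{**}}$.

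For the final bound the paper quotes $\norm{\lim_nW_{g_n}^{**}}\le1$ from \cite[Corollary~3.4]{AliagaPernecka}. Your Ces\`aro idea is a correct elementary substitute for that step. Write $h_N=\frac1N\sum_{k=1}^Ng_k=\chi_N(d(\cdot,0))$; then $t\abs{\chi_N'(t)}\le2/N$, which gives $\norm{W_{h_N}}\le1+2/N$. Ces\`aro means of a norm-convergent sequence have the same limit, so the limit operator has norm at most $1$.

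The norm convergence itself can also be proved directly:
\begin{enumerate}[$(1)$]
\item Write $W_{g_n}=W_{g_1}+\sum_{k<n}W_{g_{k+1}-g_k}$.
\item For indices $k$ of the same parity, the supports of $g_{k+1}-g_k$ meet only where these functions vanish.
\item Each $(g_{k+1}-g_k)f$ with $f\in\Lipz(M)$, $\norm{f}_{\Lip}\le1$, is $5$-Lipschitz.
\item Gluing finitely many nearly norming pieces of the same parity therefore gives a $10$-Lipschitz function, and so $\sum_k\norm{W_{g_{k+1}-g_k}^{**}\xi^{**}}\le20\norm{\xi^{**}}$.
\end{enumerate}
Some argument of this kind is indispensable: compatibility of the $Q_r$ alone cannot give convergence.
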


\begin{remark}
 (1) The above theorem holds with formally weaker assumptions -- it is enough to assume that there is a sequence $(r_n)$ of radii such that $r_n\nearrow\infty$ which satisfies the respective conditions. The proof is essentially the same, only the sequence $(2^{n+1})$ used in the proof should be replaced by a suitable subsequence of $(r_n)$.

 (2) Note that \eqref{eq:metric-ball-coherent-retractions} easily implies that $\norm{Q_r}\le \norm{Q_s}$ for $0<r<s$, so $C$ may be expressed also as the limit of $\norm{Q_r}$.
\end{remark}

\begin{proof} If $M$ is bounded, the statement is trivial. So, assume that $M$ is unbounded. We will use a technique from \cite{AliagaPernecka} to glue the projections. The proof will be done in several steps.

\smallskip

{\bf Step 1.} A sequence $(W_n)$ of multiplication operators on $\F(M)$.

\smallskip

To shorten the notation set $r_n=2^{n+1}$ for $n\in\N$.  Let $\eta:[0,\infty)\to [0,1]$ be such that $\eta=1$ on $[0,1]$, $\eta=0$ on $[2,\infty)$ and $\eta$ is linear on $[1,2]$. Then $\eta$ is $1$-Lipschitz and hence, for each $n\in\N$, the function 
$\eta_n(t)=\eta(2^{-n}t)$, $t\in [0,\infty)$ is $2^{-n}$-Lipschitz.

Next define $w_n(x)=\eta_n(d(x,0))$ for $x\in M$. Then $\norm{w_n}_\infty=1$, $w_n$ is $2^{-n}$-Lipschitz and $w_n=0$ outside $B(0,r_n)$. Denote by $W_n$ the operator $W_{w_n}$ from Lemma~\ref{lem:free-weighting}. The quoted lemma gives, among others, that $\norm{W_n}\le 3$ and that $W_n^*:\Lip_0(M)\to\Lip_0(M)$ is given by $W_n^*f=w_nf$ for $f\in \Lip_0(M)$.

\smallskip

{\bf Step 2.} A sequence of operators $L_n:\F(M)^{**}
\to\F(M)$.

\smallskip

    Let $J_n:\F(B(0,r_n))\to \F(M)$ be the canonical isometric embedding. Then $J_n^*$ is an isometric quotient of $\F(M)^*=\Lipz(M)$ onto $\F(B(0,r_n))^*=\Lipz(B(0,r_n))$ (cf. Proposition~\ref{prop:subspaces-quotients}). Moreover, it is given by the restriction $J_n^*f=f|_{B(0,r_n)}$ for $f\in \Lipz(M)$. Further, $J_n^{**}$ is again an isometric inclusion (cf. Proposition~\ref{prop:subspaces-quotients}), this time of $\F(B(0,r_n))^{**}$ into $\F(M)^{**}$. Its range is $(\ker J_n^*)^\perp$. Set
    \[
 L_n  :=
 J_n\circ Q_{r_n}\circ (J_n^{**})^{-1}\circ W_n^{**}
 \]
 Then $L_n$ is a well-defined bounded linear operator from $\F(M)^{**}$ to $\F(M)$. Indeed, if $\xi^{**}\in \F(M)^{**}$ and $f\in \ker J_n^*$, then
 $$\pair{W_n^{**}\xi^{**}}{f}=\pair{\xi^{**}}{W_n^*f}=\pair{\xi^{**}}{w_nf}=\pair{\xi^{**}}{0}=0,$$
 where we used that $w_n$ is supported by $B(0,r_n)$ and $f|_{B(0,r_n)}=J_n^*f=0$. Thus the range of $W_n^{**}$ is contained in the range of $J_n^{**}$, hence we may apply the inverse. Clearly, $\norm{L_n}\le \norm{Q_{r_n}}\norm{W_n}\le 3C$.

\smallskip

{\bf Step 3.} The limit operator $Q:\F(M)^{**}\to\F(M)$ is defined by $Q\xi^{**}=\lim_n L_n\xi^{**}$ for $\xi^{**}\in\F(M)^{**}$.

\smallskip

  We note that the function $w_n$ coincides with function $H_n$ from \cite[equation (2.5)]{AliagaPernecka} and hence operator $W_n^*$ coincides with $\mathcal{T}_{H_n}$ from \cite{AliagaPernecka}. Hence, it follows from the beginning of Section 3.1 in \cite{AliagaPernecka} that $(W_n^{**}\xi^{**})$ converges in norm for any $\xi^{**}\in \Lipz(M)^*=\F(M)^{**}$.

We shall use this fact to prove that, given $\xi^{**}\in \F(M)^{**}$, the sequence $(L_n\xi^{**})$ is norm-Cauchy, hence norm convergent. This will show that $Q$ is a well-defined bounded linear operator.

To this end fix $\xi^{**}\in \F(M)^{**}$ and natural numbers $n<m$. Then $J_m\iota_{r_n,r_m}=J_n$, and hence $J_m^{**}\iota_{r_n,r_m}^{**}=J_n^{**}$. Therefore
$$\begin{aligned}
   L_n &= J_n\circ Q_{r_n}\circ (J_n^{**})^{-1}\circ W_n^{**} = J_m\circ \iota_{r_n,r_m} \circ Q_{r_n}\circ (J_n^{**})^{-1}\circ W_n^{**}
   \\& =  J_m\circ Q_{r_m}\circ \iota_{r_n,r_m}^{**}\circ (J_n^{**})^{-1}\circ W_n^{**}
\end{aligned}$$
Hence,
$$\norm{L_m\xi^{**}-L_n\xi^{**}}\le C\norm{(J_m^{**})^{-1} W_m^{**}\xi^{**}- \iota_{r_n,r_m}^{**}(J_n^{**})^{-1} W_n^{**}\xi^{**}}.$$
Taking into account that $J_m^{**}$ is an isometric embedding, we get
$$\begin{aligned}
    \norm{(J_m^{**})^{-1} W_m^{**}\xi^{**}- \iota_{r_n,r_m}^{**}(J_n^{**})^{-1} W_n^{**}\xi^{**}}&=\norm {W_m^{**}\xi^{**}- J_m^{**}\iota_{r_n,r_m}^{**}(J_n^{**})^{-1} W_n^{**}\xi^{**}}\\&=\norm{W_m^{**}\xi^{**}-W_n^{**}\xi^{**}},\end{aligned}$$
so $$\norm{L_m\xi^{**}-L_n\xi^{**}}\le C\norm{W_m^{**}\xi^{**}-W_n^{**}\xi^{**}}.$$
Since $(W_n^{**}\xi^{**})$ is convergent (by the previous paragraph), hence Cauchy, we deduce that $(L_n\xi^{**})$ is also Cauchy, hence convergent. This completes the argument.

\smallskip

{\bf Step 4.} $Q$ is a projection.

\smallskip

 Fix a finitely supported element $\xi\in\F(M)$. Fix $m\in\N$ such that the support of $\xi$ is contained in $B(0,2^m)$.
 For $n\ge m$ we then have
$$\begin{aligned} 
L_n\xi&= J_n\circ Q_{r_n}\circ (J_n^{**})^{-1}\circ W_n^{**}\xi = J_n\circ Q_{r_n}\circ (J_n^{**})^{-1}\circ W_n\xi\\&=J_n\circ Q_{r_n}\circ (J_n^{**})^{-1} \xi =\xi,
  \end{aligned} $$
where $W_n\xi=\xi$ because $w_n=1$ on the support of $\xi$. We deduce $Q\xi=\xi$. Since finitely supported elements are dense in $\F(M)$, we deduce that $Q$ is a projection of $\F(M)^{**}$ onto $\F(M)$.

\smallskip

{\bf Step 5.} $\norm{Q}\le C$

 Note that the estimates from Step 2 give easily $\norm{Q}\le 3C$. To improve it we will use another result of \cite{AliagaPernecka}. As mentioned in Step 3, sequence $(W_n^{**}\xi^{**})$ converges in norm for any $\xi^{**}\in \F(M)^{**}$. So, we may define an operator $T:\F(M)^{**}\to\F(M)^{**}$ by $T\xi^{**
}=\lim_n W_n^{**}\xi^{**}$ for $\xi^{**}\in \F(M)^{**}$. It follows from \cite[Corollary 3.4]{AliagaPernecka} that $\norm{T}\le 1$. Now it may be easily deduced that $\norm{Q}\le C$. This completes the proof.
\end{proof}

We continue by showing that projections constructed in Proposition~\ref{prop:dual-complementability-transfer} are compatible in the sense of Theorem~\ref{thm:fromBallToTheWholeSpace}. This will complete the proof of Theorem~\ref{thm:complement}.

\begin{proposition}\label{prop:compatible}
    Let $X$ be a separable Hilbert space and let $\Gamma$ be a fixed non-degenerate Gaussian measure on $X$. Let $\Omega_1$ and $\Omega_2$ be two bounded open convex sets such that $0\in\Omega_1\subset \Omega_2$. Let $\iota$ denote the canonical isometric embedding of $\F(\Omega_1)$ into $\F(\Omega_2)$. For $j=1,2$ let $Q_j:\F(\Omega_j)^{**}\to\F(\Omega_j)$ be the projection provided by Proposition~\ref{prop:dual-complementability-transfer}. Then $\iota Q_1=Q_2\iota^{**}$.
\end{proposition}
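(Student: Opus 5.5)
The plan is to compute both sides via the representative formula \eqref{eq:dual-QP-representative} and reduce everything to compatibility of the measure-theoretic objects $M_{\mathrm{rep}}$, $q$, $P$ for $K_1=\overline{\Omega_1}\subset K_2=\overline{\Omega_2}$. Write $S_j$, $R_j$, $\theta^{(j)}$, $q_j$, $P^{(j)}$ for the objects of Section~\ref{sec:bounded} attached to $K_j$ (with $Y=X$). Fix $\xi^{**}\in\F(\Omega_1)^{**}$ and choose $\g^{**}$ with $S_1^{**}\g^{**}=\xi^{**}$. Put $\bmu=R_1\g^{**}\in M(K_1,X)$. By Lemma~\ref{lem:dual-R-construction}$(b)$ we have $\theta^{(1)}_{\bmu}=\xi^{**}|_{\Cyl^0(K_1)}$, and so $Q_1\xi^{**}=q_1P^{(1)}\bmu$. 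Let $\tilde\bmu\in M(K_2,X)$ be the extension of $\bmu$ by zero.

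For $\varphi\in\Cyl^0(K_2)$ we have $\varphi|_{K_1}\in\Cyl^0(K_1)$, with derivative equal to $\varphi'|_{K_1}$ (both are restrictions of the same global cylindrical function). Hence
\[
\pair{\iota^{**}\xi^{**}}{\varphi}=\pair{\xi^{**}}{\varphi|_{K_1}}=\pair{\bmu}{\varphi'|_{K_1}}=\pair{\tilde\bmu}{\varphi'}=\theta^{(2)}_{\tilde\bmu}(\varphi).
\]
By \eqref{eq:dual-QP-representative} for $Q_2$, this gives $Q_2\iota^{**}\xi^{**}=q_2P^{(2)}\tilde\bmu$. It therefore suffices to prove two claims.

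\textbf{Claim (A).} For every $\boldsymbol\lambda\in M_{\mathrm{rep}}(K_1;X)$, its extension $\tilde{\boldsymbol\lambda}$ belongs to $M_{\mathrm{rep}}(K_2;X)$ and $q_2\tilde{\boldsymbol\lambda}=\iota q_1\boldsymbol\lambda$. This follows from the same restriction computation, since $\iota$ is dual to restriction of Lipschitz functions.

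\textbf{Claim (B).} For every finite positive $\nu$ on $K_1$, viewed also as a measure on $K_2$, we have $M^{(1)}_\nu=M^{(2)}_\nu$.

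Granting (B), the fibrewise spaces $\overline{\linspan}\{\uu_n(x)\}$ from Proposition~\ref{prop:tangent-fields} agree $\nu$-a.e., being determined by $M_\nu$. So the orthogonal projections $\pi(x)$ agree, hence $P^{(2)}_\nu=P^{(1)}_\nu$, and by Proposition~\ref{prop:glue-Pnu} we get $P^{(2)}\tilde\bmu=\widetilde{P^{(1)}\bmu}$. Then Claim (A) gives $q_2P^{(2)}\tilde\bmu=\iota q_1P^{(1)}\bmu=\iota Q_1\xi^{**}$, which is the statement.

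The inclusion $M^{(1)}_\nu\subset M^{(2)}_\nu$ in (B) is Claim (A). The reverse inclusion is the main obstacle. Suppose $\g\nu$, with $\nu$ living on $K_1$, is represented in $\F(K_2)$ by some $\zeta$. Since every element of $\Cyl^0(K_1)$ extends to one of $\Cyl^0(K_2)$, it is enough to show $\zeta\in\F(K_1)$, i.e.\ $\pair{\zeta}{f}=0$ for all $f\in\Lipz(K_2)$ vanishing on $K_1$. We only know $\pair{\zeta}{\varphi}=\int\pair{\g}{\varphi'}\,d\nu$ for cylindrical $\varphi$. The difficulty is that $\nu$ may charge $\partial K_1$, where cylindrical approximants of $f$ need not have small derivative.

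My first attempt is to approximate such $f$ weak$^*$ (boundedly) by cylindrical $\varphi_k$ whose derivatives tend to $0$ pointwise on $K_1$. I would mimic the proof of Lemma~\ref{lem:general-cylindrical-properties}\ref{item:G3}: choose $F$ on the finite-dimensional image as an extension vanishing on a neighbourhood of $T(K_1)$ shrinking to $T(K_1)$. Since $T(K_1)$ is compact convex, one can take $F$ to be $f\circ T^{-1}$ modified to vanish on an $\varepsilon$-neighbourhood, at the cost of a slightly larger Lipschitz constant. Smoothing with $u_k$, $k\gg 1/\varepsilon$, then gives $\varphi_k'=0$ on $K_1$, so dominated convergence gives $\pair{\zeta}{f}=0$.

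If this approximation fails, the fallback is to use Proposition~\ref{prop:dual-module} with weights $g\in\Cyl(K_2)$ converging to $\mathbf 1_{K_1}$ in $L^1(|\g\nu|)$. By \eqref{eq:dual-module-representative}, these weights produce representatives $W_g\zeta-\int\delta\,\pair{\uu}{g'}\,d|\bmu|$, and one then needs to control the correction term while showing these representatives concentrate in $\F(K_1)$.
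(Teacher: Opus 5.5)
Your overall architecture is sound and follows the paper closely. Computing $Q_2\iota^{**}\xi^{**}$ via \eqref{eq:dual-QP-representative} with the zero extension $\tilde{\bmu}$ is a tidy replacement for the paper's operator $J$ and the identity $R_2J^{**}\g^{**}=\tilde{\bmu}$ (its Steps~1, 3 and~6). Claim~(A) is the `conversely' half of its Step~4, and the passage from (B) to $P^{(2)}\tilde{\bmu}=\widetilde{P^{(1)}\bmu}$ is its Step~5.

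\textbf{The gap.} It lies in the one nontrivial point, the reverse inclusion in (B), which is the paper's Step~2. Your approximation scheme does not work as described. The map $T$ from the proof of Lemma~\ref{lem:general-cylindrical-properties}\ref{item:G3} only almost preserves the distances among $z_0,\dots,z_n$. Nothing prevents $Tz_i\in T(K_1)$ for some $z_i\in K_2\setminus K_1$ with $f(z_i)\neq0$. For instance, take $X=\R^2$, $K_1=[-3,3]\times[-\delta,\delta]$, $z_0=0$ and $z_1=(1,1)$. The norming functional $w_{01}=(1,1)/\sqrt2$ gives $Tz_1=\sqrt2\in T(K_1)$, while $f=\dist(\cdot,K_1)$ has $f(z_1)=1-\delta$. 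No $F$ vanishing on $T(K_1)$, let alone on a neighbourhood of it, can then take the value $f(z_1)$ at $Tz_1$, whatever Lipschitz constant you allow. Compactness and convexity of $T(K_1)$ do not help.

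\textbf{The missing idea: Hahn--Banach separation.} For each $z_i\notin K_1$, add to $T$ a functional $y_i\in B_X$ with $\pair{z_i}{y_i}-\sup_{u\in K_1}\pair{u}{y_i}>(1-\varepsilon)\dist(z_i,K_1)$. Then, in $\ell_\infty^m$, $\dist(Tz_i,T(K_1))>(1-\varepsilon)\dist(z_i,K_1)\ge(1-\varepsilon)\abs{f(z_i)}/L$. Consider the function equal to $0$ on $T(K_1)+\eta B_{\ell_\infty^m}$ and to $f(z_i)$ at $Tz_i$. It is roughly $L$-Lipschitz once $\eta\le\varepsilon\min_i\dist(z_i,K_1)$. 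Extend it and convolve with $u_k$, $1/k<\eta$; this gives $\varphi\in\Cyl_Y^0(K_2)$ with $\varphi=0$ and $\varphi'=0$ on $K_1$. Since $\nu$ lives on $K_1$, $\pair{\zeta}{\varphi}=0$ exactly, so no dominated convergence along a net is needed. The bounded net of such $\varphi$ converges weak$^*$ to $f$, hence $\pair{\zeta}{f}=0$.

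\textbf{Comparison with the paper.} With this repair your route is a valid alternative. It is a direct annihilator argument that bypasses Proposition~\ref{prop:dual-module} and the support characterization of free-space elements, at the price of a relative version of the norming construction. The paper instead separates a single point $x\in K_2\setminus K_1$ by some $y$ and plugs the cutoff $\varphi=g(\pair{\cdot}{y})$ into \eqref{eq:dual-module-representative} to get $W_\varphi\xi=0$. It then concludes via supports. Your fallback, made precise with the weight $1-\varphi$ (so the correction term vanishes because $\varphi'=0$ on $K_1$), is exactly that argument, but as written it is left unfinished.
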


\begin{proof} The proof will be done in several steps.

\smallskip

{\bf Step 1:} The setting.

\smallskip

    For $j=1,2$ let $K_j=\overline{\Omega_j}$, $\gamma_j$, $S_j$, $\Phi_j$, $R_j$, $\theta^j$, $q_j$, $P_j$ be the respective variants of the above-defined objects.
    Further, let $J:L^1(\gamma_1;X)\to L^1(\gamma_2;X)$ be the isometric embedding defined by $$\f\in L^1(\gamma_1;X) \mapsto \frac{\Gamma(\Omega_2)}{\Gamma(\Omega_1)}\f\mbox{ extended by zero on }\Omega_2\setminus\Omega_1.$$ It easily follows from the formula from Theorem~\ref{thm:main} that $S_2J=\iota S_1$.
    We recall that, similarly as in Section~\ref{sec:bounded} we regard $X$ as the dual of $Y=X$.

\smallskip   

    {\bf Step 2.} Let $\bmu\in M(K_2,X)$. Assume $\bmu$ is carried by $K_1$ and representable. Then $q_2(\bmu)\in\iota(\F(K_1))$.

\smallskip

    To locate the representative of $\boldsymbol\mu$, we recall two properties of  supports in Lipschitz-free spaces. For a complete pointed metric space $M$ and $\xi\in\F(M)$, a point $x\in M$ belongs to the support of $\xi$, denoted by
$\supp\xi$, if and only if every neighbourhood $U$ of $x$ supports some $f\in\Lipz(M)$ with $\pair{\xi}{f}\ne0$. Moreover, for every closed $A\subset M$ containing $0$, $\supp\xi\subset A$ if and only if
$\xi\in\iota_{A,M}(\F(A))$, where $\iota_{A,M}$ denotes the canonical inclusion; see \cite[Propositions~2.6 and 2.7]{Aliaga-supports}.

Let $\xi=q_2(\bmu)\in\F(K_2)$. We will show that the support of $\xi$ is contained in $K_1$. To this end fix $x\in K_2\setminus K_1$. Fix $y\in Y$ and $\alpha,\beta>0$ such that
    $$ \pair{x}{y} > \alpha>\beta>\sup_{u\in K_1} \pair{u}{y}.$$
    This is a consequence of the Hahn-Banach separation theorem. Fix a $C^\infty$-function $g:\R\to\R$ such that $g=0$ on $(-\infty,\beta]$ and $g=1$ on $[\alpha,\infty)$. Set $\varphi(u)=g(\pair{u}{y})$ for $u\in K_2$. Then $\varphi\in \Cyl_Y^0(K_2)$. Moreover, $\varphi\bmu=0$ (as $\bmu$ is supported by $K_1$). Hence also $q_2(\varphi\bmu)=0$. We deduce from Proposition~\ref{prop:dual-module} that $W_\varphi(\xi)=0$ as well.
    (Indeed, we may use \eqref{eq:dual-module-representative} -- the left-hand side is $q_2(\varphi\bmu)=0$, the integral on the right-hand side is also $0$ because $\bmu$ is supported by $K_1$ and $\varphi'=0$ on $K_1$, hence the remaining term $W_\varphi(q_2(\bmu))=W_\varphi(\xi)$ should be zero as well.)
    
    Let now $f\in\Lipz(K_2)$ be such that its support is contained in $\{u\in K_2;\pair{u}{y}>\alpha\}$. Then
    $W_\varphi^*f=\varphi f=f$ and hence
    $$\pair{f}{\xi}=\pair{W_\varphi^*f}{\xi}=\pair{f}{W_\varphi \xi}=\pair{f}{0}=0.$$
    This shows that $x\notin \supp \xi$. Thus $\supp \xi\subset K_1$, hence $\xi\in\iota(\F(K_1))$. This completes the argument.

\smallskip

{\bf Step 3.} Fix $\xi^{**}\in \F(\Omega_1)^{**}$ and find $\g^{**}\in (L^1(\gamma_1;X))^{**}$ with $S_1^{**}\g^{**}=\xi^{**}$.  Set $\bmu_1=R_1\g^{**}$ and $\bmu_2=R_2J^{**}\g^{**}$.
Then $\bmu_2$ is carried by $K_1$ and its restriction to $K_1$ coincides with $\bmu_1$.  

\smallskip

    We have
    $$\iota^{**}\xi^{**}=\iota^{**}S_1^{**}\g^{**}=S_2^{**}J^{**}\g^{**},$$
    where the last equality follows from equality $\iota S_1=S_2 J$ mentioned in Step 1.
    For $\f\in C(K_2,Y)$ we have
    $$\pair{\bmu_2}{\f}=\pair{R_2J^{**}\g^{**}}{\f}=\pair{J^{**}\g^{**}}{\Phi_2(\f)}=\pair{\g^{**}}{J^*\Phi_2(\f)}.$$
    Since for $\g\in L^1(\gamma_1;X)$ we have
    $$\begin{aligned}
            \pair{J^*\Phi_2(\f)}{\g}&=\pair{\Phi_2(\f)}{J\g}=
    \frac{\Gamma(\Omega_2)}{\Gamma(\Omega_1)}\int_{\Omega_1}
\pair{\g(x)}{\f(x)}\,d\gamma_2(x)\\&=\int_{\Omega_1}
\pair{\g(x)}{\f(x)}\,d\gamma_1(x)=\pair{\Phi_1(\f|_{K_1})}{\g},\end{aligned}$$ 
we deduce that $J^*\Phi_2(\f)=\Phi_1(\f|_{K_1})$ and hence we may complete the above computation to get

$$\pair{\bmu_2}{\f}=\pair{\g^{**}}{\Phi_1(\f|_{K_1})}=\pair{R_1\g^{**}}{\f|_{K_1}}=\pair{\bmu_1}{\f|_{K_1}},\quad \f\in C(K_2,Y).$$
It follows that $\bmu_2$ is carried by $K_1$ and $\bmu_1$ is the restriction of $\bmu_2$ as required (note that functions from $C(K_1,Y)$ are exactly restrictions of functions from $C(K_2,Y)$).

\smallskip

{\bf Step 4.} Let $\nu_1=\abs{\bmu_1}$ and $\nu_2=\abs{\bmu_2}$. By Step 3 we know that $\nu_2$ is carried by $K_1$ and $\nu_1$ is the restriction of $\nu_2$ to $K_1$. Hence there is also a canonical isometric identification of  $L^1(\nu_1;X)$ and $L^1(\nu_2;X)$. Given $\f\in L^1(\nu_1;X)=L^1(\nu_2;X)$, we have that $\f\nu_1$ is representable if and only if $\f\nu_2$ is representable. Moreover, in such a case, $q_2(\f\nu_2)=\iota(q_1(\f\nu_1))$.

\smallskip

Assume first that $\f\nu_2$ is representable and let $\xi_2=q_2(\f\nu_2)$. By Step 2 we get $\xi_2=\iota \xi_1$ for some $\xi_1\in\F(K_1)$. Then for each  $\varphi\in\Cyl_Y^0(K_2)$ we have
$$ \int_{K_1} \pair{\f(x)}{\varphi'(x)}\, d\nu_1(x)=\int_{K_2} \pair{\f(x)}{\varphi'(x)}\, d\nu_2(x)=\pair{\varphi}{\xi_2}=\pair{\varphi}{\iota \xi_1}=\pair{\varphi|_{K_1}}{\xi_1}.$$
Since functions from $\Cyl_Y^0(K_1)$ are exactly restrictions of functions from $\Cyl_Y^0(K_2)$, we get that $\f\nu_1$ is representable and $q_1(\f\nu_1)=\xi_1$.

Conversely, if $\f\nu_1$ is representable and $\xi_1=q_1(\f\nu_1)$, then it is easy to check (using similar computation) that $\f\nu_2$ is representable and $q_2(\f\nu_2)=\iota(\xi_1)$.

\smallskip

{\bf Step 5.} $q_2P_2\bmu_2=\iota q_1P_1\bmu_1$.

\smallskip

Assume that $\bmu_2=\uu\nu_2$, for some $\uu\in L^1(\nu_2;X)$. It follows from Step 3 that
$\bmu_1=\uu\nu_1$, using the identification from Step 4. It follows from Step 4 that $M_{\nu_1}=M_{\nu_2}$, hence their associated closed subspaces of $X$ coincide almost everywhere, by Proposition~\ref{prop:tangent-fields}, and so do the orthogonal projections onto them. Therefore $P_{\nu_1}(\uu)=P_{\nu_2}(\uu)$ (where $P_{\nu_1}$ and $P_{\nu_2}$ are the projections from Lemma~\ref{lem:fibrewise-orthogonal-projections}). Hence $P_2(\bmu_2)$ is carried by $K_1$ and its restriction to $K_1$ is $P_1(\bmu_1)$. Thus the equality follows from the `moreover' part of Step 4.

\smallskip

{\bf Step 6.} Conclusion.

\smallskip

By Steps 3--5 we get
$$q_2P_2R_2J^{**}=\iota q_1P_1R_1.$$ 
 The defining identities $Q_jS_j^{**}=q_jP_jR_j$ from Proposition~\ref{prop:dual-complementability-transfer} together with equality $\iota S_1=S_2J$ from Step 1 then yield
 \[
 \iota Q_1S_1^{**}=Q_2S_2^{**}J^{**}=
 Q_2\iota^{**}S_1^{**}
\]
Since $S_1^{**}$ is surjective, we conclude that $Q_2\iota^{**}=\iota Q_1$.
\end{proof}

\begin{remark}\label{rem:twodim-compatible}
    An analogue of the previous proposition holds if $X$ is a general Banach space of dimension two. In this case projections $Q_1$ and $Q_2$ are provided by Remark~\ref{rem:dim-bounded} instead of Proposition~\ref{prop:dual-complementability-transfer}. The proof is the same, the only difference is in Step 5. To prove that $P_{\nu_1}(\uu)=P_{\nu_2}(\uu)$ we use that the assignment $\widetilde{P}$ from \cite[Proposition 4.4]{BGLMPS26b} recalled in Remark~\ref{rem:dim-bounded} depends only on $X$, not on $\Omega$.
\end{remark}


\section{Proof of Theorem~\ref{thm:bidual-complemented-implies-Hilbert}}
\label{sec:Mrep-complemented-implies-Hilbert}

The aim of this section is to prove Theorem~\ref{thm:bidual-complemented-implies-Hilbert} which will complete the proof of Theorem~\ref{thm:Intro1}.
It will be done in several steps. The starting point is a bounded linear projection $Q:\F(M)^{**}\to \F(M)$. Given a finite-dimensional subspace $V\subset X$, we aim to find a projection $P:X\to V$ with $\norm{P}\le \norm{Q}$. The first step is a finite-dimensional reduction provided by the general principle of local complementation contained in the lemma below. To formulate it in an efficient way we introduce the following notation.

Given a finite-dimensional subspace $V$ of a Banach space $X$, we denote its \emph{relative projection constant} by
\begin{equation}\label{eq:relative-projection-constant}
 \lambda(V,X)=\inf\{\norm P:P:X\to V\text{ is a bounded  linear projection}\}.
\end{equation}
This number is finite by a consequence of the Hahn--Banach theorem. 

\begin{lemma}\label{lem:projection-constant-local}
Let $X$ be a Banach space and let $V\subset X$ be a nonzero
finite-dimensional subspace. Then there is a bounded linear projection $P:X\to V$ such that
\[
 \norm P = \lambda(V,X)
 =\sup\{\lambda(V,E):V\subset E\subset X,\ \dim E<\infty\}.
\]
\end{lemma}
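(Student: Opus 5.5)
The plan is to prove the two inequalities between $\lambda(V,X)$ and $s:=\sup\{\lambda(V,E):V\subset E\subset X,\ \dim E<\infty\}$, and to obtain attainment of the infimum by a compactness argument in both parts.

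The inequality $s\le\lambda(V,X)$ is immediate. If $P:X\to V$ is a projection, then for every finite-dimensional $E$ with $V\subset E\subset X$ the restriction $P|_E:E\to V$ is a projection of norm at most $\norm P$. Taking the infimum over $P$ gives $\lambda(V,E)\le\lambda(V,X)$.

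For the reverse inequality, and to show that the infimum defining $\lambda(V,X)$ is attained, I will use a compactness argument in the space of bounded maps $X\to V$. Let $\mathcal E$ be the family of finite-dimensional subspaces $E$ with $V\subset E\subset X$, directed by inclusion. For each $E\in\mathcal E$, the set of projections $E\to V$ is a closed affine subset of the finite-dimensional space $L(E,V)$, and the norm is coercive on it. Hence there is a projection $P_E:E\to V$ with $\norm{P_E}=\lambda(V,E)\le s$. Extend $P_E$ to a (nonlinear, merely for bookkeeping) map $\widetilde P_E:X\to V$ by setting it to $0$ off $E$. For each $x$ we have $\widetilde P_E(x)\in s\norm x B_V$, which is compact because $\dim V<\infty$. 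So the net $(\widetilde P_E)_{E\in\mathcal E}$ lies in the product $\prod_{x\in X}s\norm xB_V$, compact by Tychonoff. Choose a subnet converging pointwise to some $P:X\to V$.

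It remains to check that $P$ is a linear projection with $\norm P\le s$. Given $x,y\in X$ and scalars $a,b$, eventually $E$ contains $x$ and $y$, so $\widetilde P_E(ax+by)=a\widetilde P_E(x)+b\widetilde P_E(y)$; passing to the limit yields linearity of $P$. Similarly, for $v\in V$ we have $\widetilde P_E(v)=v$ for all $E$, so $Pv=v$, and $\norm{Px}\le s\norm x$ for every $x$. Thus $P$ is a projection onto $V$ with $\lambda(V,X)\le\norm P\le s\le\lambda(V,X)$, so all three quantities coincide and the infimum is attained by $P$. The only step needing care is the compactness/subnet argument, in particular that the cofinality of the directed family ensures each fixed finite set of points eventually lies in $E$. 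This is routine, so I expect no real obstacle; the nontriviality of $V$ only ensures that $\lambda\ge1$ is meaningful.
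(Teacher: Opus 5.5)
Your proposal is correct and follows essentially the same route as the paper's proof. You restrict projections to get $s\le\lambda(V,X)$, take norm-minimizing projections $P_E$ on finite-dimensional $E\supset V$, extend them by zero, and pass to a pointwise convergent subnet via Tychonoff. The only cosmetic difference is that you work in the product over all of $X$, with fibres $s\norm{x}B_V$, whereas the paper restricts to $B_X$.
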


\begin{proof}
Denote the supremum by $C$. Restricting projections gives
$C\le\lambda(V,X)$.

Given a finite-dimensional subspace $E\subset X$ containing $V$, let $(P_n)$ be a sequence of projections of $E$ onto $V$ with $\norm{P_n}\to\lambda(V,E)$. Since the space of operators
$L(E,V)$ has finite dimension, the sequence $(P_n)$  has a (norm-)convergent subsequence.
Denote its limit by $P_E$. It is a projection of norm $\lambda(V,E)$, in particular $\norm{P_E}\le C$. 

Let $\widetilde{P_E}:X\to V$ be a mapping which equals $P_E$ on $E$ and $0$ on $X\setminus E$.
Consider the net $(\widetilde{P_E}|_{B_X})$, indexed by the directed set of all finite-dimensional subspaces of $X$ containing $V$, ordered by inclusion. These restrictions may be regarded as elements of the product space $(CB_V)^{B_X}$, which is compact by the Tychonoff theorem. Hence, we may find a convergent subnet and let $\widetilde{P}$ be its limit. It is easy to check that $\widetilde{P}$ may be (uniquely) extended to a linear projection $P:X\to V$ with $\norm{P}\le C$. This completes the proof.
\end{proof}

The above lemma enables us to reduce the proof of Theorem~\ref{thm:bidual-complemented-implies-Hilbert} to a construction of a projection between finite-dimensional spaces. The rest of this section is devoted to this proof. It is divided into several steps, each step in one of the following subsections.

\subsection{The setting and the basic structure of the proof}

We assume that $X$ is a nontrivial Banach space and that $M\subset X$ is a subset with nonempty interior. We further have a bounded linear projection $Q:\F(M)^{**}\to \F(M)$. Set $C=\norm{Q}$. We may assume, without loss of generality, that $0\in \Int M$ (as any translation is an isometry). We may further assume that $U_X\subset M$. Indeed, if $\alpha>0$, then $\F(\alpha M)$ is isometric to $\F(M)$.

We fix nonzero finite-dimensional subspaces $V\subset E\subset X$. We are looking for a projection from $E$ onto $V$ with norm bounded by $C$. Let $\Gamma$ be a non-degenerate Gaussian probability on $V$ and let $\gamma$ denote its normalized restriction to $U_V$. Let $S:L^1(\gamma;V)\to\F(U_V)$ be the operator provided by Theorem~\ref{thm:main}.

The construction of the sought projection is illustrated by the following diagram.

\begin{equation}\label{eq:diagramvelky}
\xymatrix{E \ar[r] & L^1(\gamma;E) \ar[r]^{J}& \F(U_E)^{**} \ar[r] & \F(M)^{**} \ar[d]_{Q}  \\
V \ar[u] \ar[r] & L^1(\gamma;V) \ar[u] \ar[r]^{S} & \F(U_V) \ar[u] \ar[r] & \F(M) \ar[d] \\
&& X \ar[r]^{\delta_X} \ar[rd]^{\varphi} & \F(X) \ar[d]^{\linearization{\varphi}} \\
&&& V
}\end{equation}

The upward arrows are canonical (isometric) embeddings. The first two are canonical inclusions of $V$ into $E$ and of $L^1(\gamma;V)$ into $L^1(\gamma;E)$. The third one is the composition of the canonical embedding of $\F(U_V)$ into $\F(U_E)$ with the embedding of $\F(U_E)$ into its bidual.
The arrow $E\to L^1(\gamma;E)$ is a canonical isometric embedding defined by $x\mapsto w\cdot x$ for a suitable weight function $w\in L^1(\gamma)$ (see Proposition~\ref{prop:projekce-konstrukce} below). The first arrow in the second line is the restriction of this embedding to $V$.

$S$ is, of course, the above-mentioned operator from Theorem~\ref{thm:main} and $J$ is its norm-one `extension' constructed in Proposition~\ref{prop:local-bidual-extension} below. The third arrow in the first line is the second adjoint of the canonical embedding $\F(U_E)\to \F(M)$ induced by the inclusion $U_E\subset M$. The third arrow in the second line is the isometric embedding induced by the inclusion $U_V\subset M$.
$Q$ is the assumed projection. We note that the upper part of the diagram commutes (because $Q$ is a projection).

The embedding $\F(M)\to\F(X)$ is induced by the inclusion $M\subset X$.
Finally, $\varphi$ is a suitably chosen Lipschitz mapping and $\linearization{\varphi}$ is its linearization. 

The sought projection $E\to V$ will be the composition of the horizontal arrows in the first line and the vertical arrows in the last column. In order to obtain a projection with the required norm,
we need to describe $J$ and to find a suitable Lipschitz mapping $\varphi:X\to V$ and a suitable weight $w$. It will be done gradually in the following subsections.

\subsection{Construction of the operator $J$} Our next aim is to construct a suitable operator $J:L^1(\gamma;E)\to \F(U_E)^{**}$. It is the content of Proposition~\ref{prop:local-bidual-extension} below. But first we give the following lemma.

\begin{lemma}\label{lem:gauss-ac}
    The measure $\gamma$ is absolutely continuous with respect to the Lebesgue measure on $V$.
\end{lemma}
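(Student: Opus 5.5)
The plan is to use the structure of nondegenerate Gaussian measures on finite-dimensional spaces. Since $V$ is finite-dimensional, we identify it linearly with $\R^d$, $d=\dim V$. All norms on $\R^d$ are equivalent and the Lebesgue measure is determined up to a constant factor, so absolute continuity does not depend on the identification. It therefore suffices to show that $\Gamma$ itself is absolutely continuous with respect to Lebesgue measure $\lambda$ on $V$. The claim for $\gamma$ then follows at once, because $\gamma=\Gamma(U_V)^{-1}\Gamma|_{U_V}\le\Gamma(U_V)^{-1}\Gamma$. Note that $\Gamma(U_V)>0$ by \ref{FS}.

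The first route is the explicit description. For a Gaussian measure on $\R^d$, the characteristic function is $\exp(i\pair{m}{\xi}-\tfrac12\pair{A\xi}{\xi})$, where $m$ is the mean and $A$ is the covariance matrix. Nondegeneracy says $\sigma_{x^*}^2=\pair{Ax^*}{x^*}>0$ for every $x^*\ne0$, so $A$ is positive definite. The standard computation then shows that $\Gamma$ has the density $(2\pi)^{-d/2}(\det A)^{-1/2}\exp(-\tfrac12\pair{A^{-1}(x-m)}{x-m})$. This is the classical fact, which could simply be cited from \cite{Bogachev}.

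To stay within the tools already used in the paper, an alternative is to work with the Cameron--Martin space. It is dense by \ref{NT} and is a linear subspace of the finite-dimensional space $V$, so $H_\Gamma=V$. Hence, for every $\Gamma$-null set $N$, every translate $N-h$ with $h\in V$ is also $\Gamma$-null. Now take a Lebesgue-null set $N$. By Fubini,
\[
\int_V\Gamma(N-h)\,d\lambda(h)=\int_V\lambda(x-N)\,d\Gamma(x)=0.
\]
So $\Gamma(N-h)=0$ for $\lambda$-almost every $h$, and in particular for at least one $h$. By the quasi-invariance just established, this gives $\Gamma(N)=0$.

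The only step needing care is the symmetric use of quasi-invariance. Mutual absolute continuity of $\Gamma$ and its translates, which comes from the Cameron--Martin theorem quoted in the proof of \ref{NT}, gives the implication in both directions: $\Gamma(N-h)=0$ forces $\Gamma(N)=0$. The Fubini exchange also requires measurability of $(x,h)\mapsto\mathbf 1_N(x-h)$. This holds after replacing $N$ by a Borel Lebesgue-null superset, which we may always do.
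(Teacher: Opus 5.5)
Your proof is correct, and your second route is essentially the paper's own argument: Fubini shows that some translate of a Lebesgue-null Borel set is $\Gamma$-null, and \ref{NT} with $H_\Gamma=V$ (forced by density in finite dimensions) then gives that the set itself is $\Gamma$-null; your first route via the explicit Gaussian density corresponds to the paper's remark that absolute continuity holds ``by the very definition''. The only blemish is a harmless sign slip: $\int_V\Gamma(N-h)\,d\lambda(h)$ equals $\int_V\lambda(N-x)\,d\Gamma(x)$ rather than $\int_V\lambda(x-N)\,d\Gamma(x)$, which does not matter because Lebesgue measure is invariant under $h\mapsto -h$.
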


\begin{proof}
    Since $V$ has finite dimension, we may assume that $V=\R^d$ for some $d\ge 1$. If $\gamma$ is the normalized restriction of a non-degenerate Gaussian measure, it is absolutely continuous by the very definition. But we will show that the absolute continuity follows only from property~\ref{NT}. Indeed, fix a Lebesgue-null Borel set $A\subset V$.
    Fubini's theorem yields
$$\begin{aligned}
    \int_V\Gamma(A-h)\,dh &=\int_V\int_V\mathbf1_A(x+h)\,d\Gamma(x)\,dh=\int_V\int_V\mathbf1_A(x+h)\,dh\,d\Gamma(x)\\&=\int_V\mathcal{L}^d(A-x)\,d\Gamma(x)=0.\end{aligned}$$ Thus some translate of $A$ is $\Gamma$-null, and property~\ref{NT} yields $\Gamma(A)=0$ (having in mind that $H_\Gamma=V$ as $V$ has finite dimension).
\end{proof}

\begin{proposition}\label{prop:local-bidual-extension}
There is a linear contraction $J:L^1(\gamma;E)\to\F(U_E)^{**}$ such that $J|_{L^1(\gamma;V)} = S$.
\end{proposition}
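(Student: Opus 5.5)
We construct $J$ as the adjoint of a suitable operator $D_E:\Lipz(U_E)\to (L^1(\gamma;E))^*$, mimicking Step~1 of the proof of Theorem~\ref{thm:main}, with averaging over the directions transversal to $V$. The difficulty is that $\gamma$ lives on $U_V$, while a Lipschitz function $f$ on $U_E$ need not be differentiable at any point of $U_V$, since $U_V$ is Lebesgue-null in $E$. We therefore smooth $f$ in the directions of $E$. Fix a complement, say $E=V\oplus Z$ with $Z$ finite-dimensional. Fix an approximate unit $(u_n)$ on $E$ as in Section~\ref{subsec:convolution}. For $f\in\Lipz(U_E)$, extend $f$ to a Lipschitz function on $E$ with the same constant (McShane) and consider $u_n*f$. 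By Fact~\ref{fact:convolution-approximation}, $u_n*f$ is $C^\infty$, its derivative is bounded by $\norm f_{\Lip}$, and $(u_n*f)'=u_n*f'$. Define
\[
 \Lambda_n(f)(\g)=\int_{U_V}\pair{(u_n*f)'(x)}{\g(x)}\,d\gamma(x),\qquad \g\in L^1(\gamma;E).
\]
Each $\Lambda_n(f)$ is a functional on $L^1(\gamma;E)$ of norm at most $\norm f_{\Lip}$. The extension is not canonical, so we define instead, for $\g\in L^1(\gamma;E)$, an element $J_n\g\in\Lipz(U_E)^*=\F(U_E)^{**}$ by $\pair{J_n\g}{f}=\Lambda_n(f)(\g)$, where $u_n*f$ is computed from the extension. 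To avoid dependence on the extension, we use a shrinking: replace $f$ by $f\circ(t\,\mathrm{Id})$ with $t<1$ close to $1$ (scaled by $1/t$), so that for $n$ large only the values of $f$ on $U_E$ enter. Then $\norm{J_n}\le1$ by the bound on $(u_n*f)'$.

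We then take $J$ to be a weak$^*$ cluster point of $(J_n)$ in the bounded set of operators $L^1(\gamma;E)\to\F(U_E)^{**}$, namely a point-weak$^*$ limit along an ultrafilter. Since $\F(U_E)^{**}$ is a dual space, this limit exists and is a linear contraction. This is the same compactness device as in Lemma~\ref{lem:projection-constant-local}.

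It remains to check that $J\g=S\g$ (embedded in the bidual) for $\g\in L^1(\gamma;V)$. This is the main obstacle. For such $\g$ we have $\pair{(u_n*f)'(x)}{\g(x)}=\partial_{\g(x)}(u_n*f)(x)$, which involves only derivatives along $V$. By density (Step~4 of Theorem~\ref{thm:main}, or simply continuity of both sides), it suffices to treat $\g=\psi h$, where $\psi$ is Lipschitz with support well inside $U_V$ and $h\in V$.

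The integration by parts property~\ref{GIP} on $V$ gives
\[
 \int\partial_h(u_n*f)\,\psi\,d\gamma=\int (u_n*f)\,(b_h\psi-\partial_h\psi)\,d\gamma.
\]
Here $b_h\in L^1$ and $\psi$ is bounded; the relevant $b_h$ is Lebesgue's, namely $0$, or the Gaussian one, and both are allowed by Remark~\ref{rem:gauss} and Lemma~\ref{lem:gauss-ac}. The right-hand side involves only values of $u_n*f$, and $u_n*f\to f$ uniformly by Fact~\ref{fact:convolution-approximation}. Hence it converges to
\[
 \int f(b_h\psi-\partial_h\psi)\,d\gamma=\pair{f}{S(\psi h)}
\]
by Step~3 of the proof of Theorem~\ref{thm:main}. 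The convergence is uniform for $f$ in bounded sets only after controlling the shrinking, but pointwise convergence in $f$ is all that is needed for the weak$^*$ limit.

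Therefore $J(\psi h)=S(\psi h)$. Since both $J$ and $S$ are bounded and linear, and such $\psi h$ are linearly dense in $L^1(\gamma;V)$, we conclude that $J|_{L^1(\gamma;V)}=S$. The delicate points to verify carefully are the handling of the boundary of $U_E$ (the shrinking argument) and the measurability and integrability of $x\mapsto(u_n*f)'(x)$ on $U_V$; the latter is harmless here because this map is continuous.
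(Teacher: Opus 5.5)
Your argument is correct. It shares with the paper only the mollification by $(u_n)$ and the ultrafilter limit; both substantive steps are done differently.

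\emph{Localization.} The paper lets $J_n$ act on $\Lipz(E)$, so no extension is needed. It then localizes separately, by showing $\pair{J_n\g}{f}\to0$ whenever $f|_{U_E}=0$; this puts the limit in $\F(U_E)^{\perp\perp}\cong\F(U_E)^{**}$. Your dilation builds the localization into the definition of $J_n$ instead.

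\emph{Identification with $S$.} The paper proves $((u_n*f)|_V)'\to(f|_V)'$ weak$^*$ in $L^\infty(V;V^*)$ via Lebesgue integration by parts. It then tests against $\g u$, where $u$ is the density of $\gamma$ (Lemma~\ref{lem:gauss-ac}). This handles every $\g\in L^1(\gamma;V)$ at once and needs from $\Gamma$ only absolute continuity of $\gamma$. You instead recycle Steps~3 and~4 of the proof of Theorem~\ref{thm:main}: Gaussian integration by parts (GIP) on $V$ moves $\partial_h$ onto $\psi$, uniform convergence finishes, and density of the fields $\psi h$ together with boundedness of $J$ and $S$ extends the identity. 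This is shorter given what is already proved, but it leans on (GIP) and on the density step.

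Three repairs are needed in your write-up.
\begin{enumerate}
\item Drop the McShane extension altogether. It is nonlinear, so $f\mapsto\Lambda_n(f)(\g)$ would not be a linear functional.
\item Couple the dilation with $n$. With $f_t(x)=t^{-1}f(tx)$ on $t^{-1}U_E$, take $t_n=n/(n+1)$. Then $x-\supp u_n\subset t_n^{-1}U_E$ for $x\in U_E$, so $u_n*f_{t_n}$ is defined on $U_E$, linearly in $f$, with $\norm{(u_n*f_{t_n})'}\le\norm f_{\Lip}$. For fixed $t$ the limit would pair $S(\psi h)$ with $f_t$ rather than $f$. With $t_n$, the estimate $\abs{f_t(x)-f(x)}\le2(1-t)\norm f_{\Lip}$ on $U_E$ (from $f(0)=0$) gives $u_n*f_{t_n}\to f$ uniformly on $U_V$, which is exactly what your (GIP) step needs.
\item The $b_h$ to use in that step is the Gaussian one from Step~3. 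The Lebesgue version with $b_h=0$ would also require differentiating the density of $\gamma$.
\end{enumerate}
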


\begin{proof}
    Since $E$ has finite dimension and $V$ is a subspace of $E$, we may assume that $E=(\R^{d_1+d_2},\norm{\cdot})$ and $V=\{(x,0)\; x\in \R^{d_1}\}\subset E$, where $d_1,d_2\ge1$.

Let $(u_n)$ be an approximate unit in $\R^{d_1+d_2}=E$ as in Section~\ref{subsec:convolution}, so that $\supp u_n\subset n^{-1}U_E$. For each $n\in\N$ we define an operator $J_n:L^1(\gamma;E)\to\F(E)^{**}$ by
$$\pair{J_n\g}{f} = \int_{U_V} \pair{\g(t)}{(u_n*f)'(t)} \, d\gamma(t)= \int_{U_V} \pair{\g(t)}{(u_n*f')(t)} \, d\gamma(t)$$
for $\g\in L^1(\gamma;E)$ and $f\in\Lipz(E)=\F(E)^*$. By Fact~\ref{fact:convolution-approximation}, $J_n$ is a well-defined linear contraction.

Let us prove two properties of the sequence $(J_n)$:

\smallskip

(1) If $f\in \Lipz(E)$ is such that $f|_{U_E}=0$, then $\pair{J_n\g}{f}\to0$ for each $\g\in L^1(\gamma;E)$.

\smallskip

Indeed, $f'=0$ on $U_E$. For $n\ge2$,
$x\in(1-1/n)U_E$ and $z\in\supp u_n$ imply $x-z\in U_E$, so $u_n*f'=0$ on $(1-1/n)U_E$. Hence
$$\begin{aligned}
\abs{\pair{J_n\g}{f}}&=  \abs{\int_{U_V} \pair{\g(t)}{(u_n*f')(t)} \, d\gamma(t)}
=\abs{ \int_{U_V\setminus(1-\frac1n)U_V} \pair{\g(t)}{(u_n*f')(t)} \, d\gamma(t)}
\\&\le  \int_{U_V\setminus(1-\frac1n)U_V} \norm{\g(t)}\norm{(u_n*f')(t)} \, d\gamma(t)\\&\le \norm{f}_{\mathrm{Lip}} \int_{U_V\setminus(1-\frac1n)U_V} \norm{\g(t)} \, d\gamma(t) \to0
\end{aligned}$$
since $\g\in L^1(\gamma;E)$.

\smallskip

(2) If $\g\in L^1(\gamma;V)$, then $J_n\g\to S\g$ in the weak$^*$ topology of $\F(E)^{**}$.

\smallskip

Indeed, pick $\g\in L^1(\gamma;V)$ and fix $f\in \Lipz(E)$. Then
$$\begin{aligned}
    \pair{J_n\g}{f}&=\int_{U_V} \pair{\g(t)}{(u_n*f)'(t)} \, d\gamma(t)
=\int_{U_V} \pair{\g(t)}{(u_n*f)'(t)|_V} \, d\gamma(t)\\& =\int_{U_V} \pair{\g(t)}{((u_n*f)|_V)'(t)} \, d\gamma(t).\end{aligned}$$

 We claim that $((u_n*f)|_V)'\to (f|_V)'$ in the weak$^*$ topology of $L^\infty(V,V^*)$. By Fact~\ref{fact:convolution-approximation}, $u_n*f\to f$ uniformly on $E$ and
$\norm{((u_n*f)|_V)'}_\infty\le\norm f_{\Lip}$. For $\varphi=(\varphi_1,\ldots,\varphi_{d_1})
\in C_c^\infty(V;V)$, integration by parts gives
\[\begin{gathered}
 \int_V\pair{\varphi(s)}{((u_n*f)|_V)'(s)}\,ds
 =-\int_V(u_n*f)(s)
   \sum_{j=1}^{d_1}\frac{\partial\varphi_j}{\partial s_j}(s)\,ds\\
 \longrightarrow
 -\int_V f(s)
   \sum_{j=1}^{d_1}\frac{\partial\varphi_j}{\partial s_j}(s)\,ds =\int_V\pair{\varphi(s)}{(f|_V)'(s)}\,ds.
 \end{gathered}
\]
The uniform bound on the derivatives and density of
$C_c^\infty(V;V)$ in $L^1(V;V)$ therefore imply that
$((u_n*f)|_V)'\to(f|_V)'$ in the weak$^*$ topology of
$L^\infty(V;V^*)$.

Since $\gamma$ is absolutely continuous with respect to the Lebesgue measure (by Lemma~\ref{lem:gauss-ac}), the function
$\g\cdot u$, where $u$ is the density of $\gamma$ with respect to the Lebesgue measure, extended by zero outside $U_V$, belongs to $L^1(V;V)$. Testing the weak$^*$ convergence against this function yields
$$\pair{J_n\g}{f}\to \int_{U_V} \pair{\g(t)}{(f|_V)'(t)}\, d\gamma(t)=\pair{S\g}{f|_V},$$
which proves (2).

\smallskip

Let us now pass to finding $J$. The operator $J$ will be a cluster point of the sequence $(J_n)$ in the weak$^*$ operator topology. A possible interpretation is the following: fix $\mathcal{U}$ a free ultrafilter over $\N$ and define
$$\pair{J\g}{f}=\lim_{\mathcal{U}} \pair{J_n\g}{f},\quad \g\in L^1(\gamma;E), f\in \Lipz(E).$$
Then $J$ is clearly a contractive linear operator from $L^1(\gamma;E)$ into $\F(E)^{**}$.

Property (1) implies that $\pair{J\g}{f}=0$ whenever $\g\in L^1(\gamma;E)$ and $f\in\Lipz(E)$ satisfies
$f|_{U_E}=0$. Thus the range of $J$ is contained in $\F(U_E)^{\perp\perp}$, canonically identified with
$\F(U_E)^{**}$. Finally, property (2) implies that for any $\g\in L^1(\gamma;V)$ we have
$$\pair{J\g}{f}=\pair{S\g}{f|_V},\quad f\in\Lipz(E),$$
which means that $J$ is an extension of $S$ as required.
\end{proof}

\begin{corollary}
    There is a linear operator $H:L^1(\gamma;E)\to \F(X)$ such that $\norm{H}\le C$ and $H|_{L^1(\gamma;V)}=S$.
\end{corollary}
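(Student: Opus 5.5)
The operator $H$ is the composition of the first row of diagram~\eqref{eq:diagramvelky} with $Q$ and the embedding $\F(M)\to\F(X)$. Let $\iota_E:\F(U_E)\to\F(M)$ be the canonical isometric embedding induced by $U_E\subset M$ (recall $U_X\subset M$, so $U_E\subset M$). Let $\iota_V:\F(U_V)\to\F(M)$ be the one induced by $U_V\subset M$, and let $j_M:\F(M)\to\F(X)$ be induced by $M\subset X$. I would define
\[
 H = j_M\circ Q\circ \iota_E^{**}\circ J,
\]
where $J:L^1(\gamma;E)\to\F(U_E)^{**}$ is the contraction from Proposition~\ref{prop:local-bidual-extension}. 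The norm estimate is then immediate: $\norm{J}\le1$, $\iota_E^{**}$ and $j_M$ are isometric embeddings (cf.\ Proposition~\ref{prop:subspaces-quotients}), and $\norm{Q}=C$, so $\norm{H}\le C$.

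It remains to check that $H|_{L^1(\gamma;V)}=S$, understood via the canonical inclusions $\F(U_V)\subset\F(U_E)\subset\F(M)\subset\F(X)$. Take $\g\in L^1(\gamma;V)$. By Proposition~\ref{prop:local-bidual-extension}, $J\g=S\g$. More precisely, $J\g$ is the image of $S\g\in\F(U_V)$ under the embedding $\F(U_V)\to\F(U_E)$ followed by the canonical embedding $\kappa$ of $\F(U_E)$ into its bidual. This identification should be double-checked against the proof of that proposition: there one has $\pair{J\g}{f}=\pair{S\g}{f|_V}$ for $f\in\Lipz(E)$, and since $S\g\in\F(U_V)$ only the values of $f$ on $U_V$ matter.

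Next, $\iota_E^{**}\circ\kappa=\kappa_M\circ\iota_E$, where $\kappa_M$ is the canonical embedding of $\F(M)$ into its bidual. Hence $\iota_E^{**}J\g$ is the canonical image in $\F(M)^{**}$ of the element $\iota_V(S\g)\in\F(M)$. Because $Q$ is a projection onto $\F(M)$, it fixes this element, giving $Q\iota_E^{**}J\g=\iota_V(S\g)$. Applying $j_M$ yields $H\g=S\g$ viewed in $\F(X)$.

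The only delicate point is this bookkeeping of identifications, in particular that the range of $J$ lies in $\F(U_E)^{\perp\perp}\cong\F(U_E)^{**}$, so that $\iota_E^{**}$ can be applied. All of this was already established in Proposition~\ref{prop:local-bidual-extension}, so I expect no real obstacle.
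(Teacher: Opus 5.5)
Your proposal is correct and takes the same route as the paper: $H$ is $J$ followed by the second adjoint of $\F(U_E)\to\F(M)$, then $Q$, then the inclusion $\F(M)\to\F(X)$, and the extension property comes from $Q$ fixing the canonical copy of $\F(M)$. You simply make explicit the bookkeeping of canonical embeddings that the paper leaves to the diagram.
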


\begin{proof}
    It is enough to compose $J$ with $Q$ with the use of the isometric inclusions from diagram \eqref{eq:diagramvelky}.
\end{proof}

\subsection{Construction of a projection starting from a Lipschitz mapping}

Having constructed the operator $J$ and subsequently $H$, we may now simplify diagram \eqref{eq:diagramvelky} to the following one:

\begin{equation}\label{eq:diagrammaly}
\xymatrix{&& X \ar[d]^{\delta_X} \ar[rd]^{\varphi} & \\
E \ar[r] & L^1(\gamma;E) \ar[r]^{H}& \F(X) \ar[r]_{\linearization{\varphi}} & V  \\
V \ar[u] \ar[r] & L^1(\gamma;V) \ar[u] \ar[r]^{S} & \F(U_V) \ar[u]  & 
}\end{equation}

Here $H$ is the composed operator from the previous subsection. Recall that $\norm{H}\le C$ and $H$ extends $S$. Again, the upwards arrows are canonical inclusions. To define the arrow $E\to L^1(\gamma;E)$ fix a weight function $w\in L^1(\gamma)$ with $w\ge 0$ and $\norm{w}_1=1$. The arrow is then defined by $x\mapsto w\cdot x$. It is clearly an isometric embedding of $E$ into $L^1(\gamma;E)$. The arrow $V\to L^1(\gamma;V)$ is the respective restriction, hence it is given by the same formula.

Finally, $\varphi$ is a suitable pointed Lipschitz mapping and $\linearization{\varphi}$ is its linearization. Next we observe that, under some conditions on $\varphi$ and $w$ we obtain a projection $P:E\to V$.

\begin{proposition}\label{prop:projekce-konstrukce}
    Let $w\in L^1(\gamma)$ be a non-negative function of norm one. Let $\varphi:X\to V$ be a pointed Lipschitz mapping and let $\linearization{\varphi}:\F(X)\to V$ be its linearization.
    Define
    $$Px=\linearization{\varphi}(H(w\cdot x)),\quad x\in E.$$
    Then $P$ is a bounded linear operator from $E$ to $V$. Moreover, if
    $$(\varphi|_{U_V})'(t)=\operatorname{Id}_V\mbox{ for $\gamma$-almost all $t\in U_V$ satisfying }w(t)>0,$$
    then $P$ is a projection of $E$ onto $V$.
\end{proposition}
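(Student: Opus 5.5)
Boundedness and linearity of $P$ are immediate. The map $x\mapsto w\cdot x$ is a linear isometry of $E$ into $L^1(\gamma;E)$, and $H$ is bounded with $\norm{H}\le C$. The linearization $\linearization{\varphi}$ is bounded with norm $\norm{\varphi}_{\Lip}$. Hence $P$ is a composition of bounded linear maps and $\norm{P}\le C\norm{\varphi}_{\Lip}$. Since the range of $\linearization{\varphi}$ lies in $V$, so does the range of $P$. It therefore remains to show that $Pv=v$ for every $v\in V$, because a linear map into $V$ that fixes $V$ is a projection onto $V$.

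Fix $v\in V$. Then $w\cdot v\in L^1(\gamma;V)$, so $H(w\cdot v)=S(w\cdot v)\in\F(U_V)$, viewed inside $\F(X)$ via the canonical inclusion. To identify $\linearization{\varphi}(S(w v))\in V$, I would test it against an arbitrary $v^*\in V^*$. By Fact~\ref{fact:linearization-adjoint}, applied with $N=U_V\cup\{0\}$ (recall $0\in U_V$),
\[
\pair{\linearization{\varphi}(S(wv))}{v^*}=\pair{S(wv)}{(v^*\circ\varphi)|_{U_V}}.
\]
The function $v^*\circ\varphi|_{U_V}$ belongs to $\Lipz(U_V)$, since $\varphi$ is pointed and Lipschitz. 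The formula for $S$ from Theorem~\ref{thm:main} then gives
\[
\pair{S(wv)}{v^*\circ\varphi|_{U_V}}=\int_{U_V} w(t)\,\pair{(v^*\circ\varphi|_{U_V})'(t)}{v}\,d\gamma(t).
\]

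At every point $t$ where $\varphi|_{U_V}$ is G\^ateaux differentiable, the chain rule for the bounded linear functional $v^*$ gives $(v^*\circ\varphi|_{U_V})'(t)=v^*\circ(\varphi|_{U_V})'(t)$. By the hypothesis, for $\gamma$-almost every $t$ with $w(t)>0$ this derivative equals $v^*$. On the set $\{w=0\}$ the integrand vanishes regardless of the derivative, which takes care of the points where the hypothesis says nothing. The integral therefore equals $\pair{v^*}{v}\int_{U_V}w\,d\gamma=v^*(v)$, because $w\ge0$ and $\norm{w}_1=1$. Since $v^*\in V^*$ was arbitrary, $Pv=v$.

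The only point needing care is the derivative notation. In Theorem~\ref{thm:main}, $f'$ is the G\^ateaux derivative of a function on the open set $U_V\subset V$. The scalar Lipschitz function $v^*\circ\varphi|_{U_V}$ has this derivative $\gamma$-a.e. by property~\ref{D}; here $V$ is finite-dimensional, so this is just Rademacher's theorem combined with Lemma~\ref{lem:gauss-ac}. At points where $(\varphi|_{U_V})'(t)$ exists, this derivative coincides with $v^*\circ(\varphi|_{U_V})'(t)$. Beyond this, I expect no real obstacle.
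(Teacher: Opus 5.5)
Your proof is correct and follows essentially the same route as the paper. Boundedness comes from the composition $\linearization{\varphi}\circ H\circ(x\mapsto w\cdot x)$, and the projection property comes from testing $Pv$ against $v^*\in V^*$, using that $H$ extends $S$, the formula for $S$ from Theorem~\ref{thm:main}, the hypothesis on $(\varphi|_{U_V})'$ on $\{w>0\}$, and $\int_{U_V}w\,d\gamma=1$. The only cosmetic difference is that the paper first records the identity $Pv=\int_{U_V}w(t)\,\partial_v\varphi(t)\,d\gamma(t)$ as a Bochner integral, whereas you substitute the hypothesis directly.
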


\begin{proof}
    The first statement is obvious -- $P$ is a bounded linear operator, being a composition of three bounded linear operators. We also get a trivial estimate $\norm{P}\le \norm{H}\norm{\linearization{\varphi}}\le C\norm{\varphi}_{\mathrm{Lip}}$.

    To prove the second statement, let us compute $Px$ for $x\in V$. So, fix $x\in V$ and $x^*\in V^*$ and compute
    $$\begin{aligned}
      x^*(Px)&=x^*(\linearization{\varphi}(H(w\cdot x)))=(x^*\circ\linearization{\varphi})(H(w\cdot x)) =\pair{ H(w\cdot x)}{x^*\circ\varphi} \\& =\pair{ S(w\cdot x)}{x^*\circ\varphi} =\int_{U_V} \pair{w(t)x}{(x^*\circ\varphi)'(t)}\,d\gamma(t)
      \\ & =\int_{U_V} w(t)\pair{x}{(x^*\circ\varphi)'(t)}\,d\gamma(t)
      =\int_{U_V} w(t)\cdot\partial_x(x^*\circ\varphi)(t)\,d\gamma(t)
       \\& =\int_{U_V} w(t)\cdot x^*\circ\partial_x\varphi(t)\,d\gamma(t)
       =  x^*\Big(\int_{U_V} w(t)\cdot\partial_x\varphi(t)\,d\gamma(t)\Big).
    \end{aligned}$$

The first two equalities follow from definitions. Let us explain the third equality. Recall that $H(w\cdot x)\in \F(X)$. Further, $x^*\circ\linearization{\varphi}\in\F(X)^*=\Lipz(X)$. The Lipschitz function representing the functional is given by composition with $\delta_X$; this composition is exactly $x^*\circ\varphi$. The fourth equality follows from the fact that $H$
extends $S$ (using that $x\in V$). The next equality follows from the formula for $S$ from Theorem~\ref{thm:main}. The next three equalities follow by elementary computation. The last one follows from the properties of the Bochner integral.

So, having in mind that $x^*\in V^*$ was arbitrary, we get
$$Px=\int_{U_V} w(t)\cdot\partial_x\varphi(t)\,d\gamma(t),\quad x\in V.$$
Now observe that the condition on $\varphi$ from the statement may be stated as
$$\forall x\in V\colon w(t)\cdot\partial_x\varphi(t)=w(t)\cdot x \quad \gamma\mbox{-almost everywhere on }U_V.$$
This clearly implies that $Px=x$ for $x\in V$, i.e. $P$ is a projection onto $V$.
\end{proof}

We note that the condition from the above proposition is satisfied, for example, if $\varphi$ is a linear projection of $E$ onto $V$. But this is not helpful, because in this way we get only the trivial estimate $\norm{P}\le C\norm{\varphi}$. We point out that we search for a projection $P$ with norm bounded by $C$ (or at least by $C+\varepsilon$ for a given $\varepsilon>0$). If we wanted just some projection (without norm estimate), there would be nothing to prove. So, in the rest of the proof we will show how to reach a small norm of $P$ by a suitable choice of $\varphi$ and $w$.

\subsection{Estimates of the norm of $P$}

In this subsection we are going to provide finer estimates of the norm of $P$ which will be later used to construct a suitable $\varphi$. The first tool is the following lemma.

\begin{lemma}\label{lem:vodhad}
    Let $w$, $\varphi$ and $P$ be as in Proposition~\ref{prop:projekce-konstrukce}. Assume that $A\subset X$ and $\varepsilon>0$ are given such that $H(wB_E)\subset \F(A)+\varepsilon B_{\F(X)}$. Then 
    $$\norm{P}\le (C+\varepsilon)\norm{\varphi|_A}_{\mathrm{Lip}}+\varepsilon\norm{\varphi}_{\mathrm{Lip}}.$$
\end{lemma}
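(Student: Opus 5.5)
Fix $x\in B_E$; since $\norm{w}_1=1$, the vector field $w\cdot x$ lies in $B_{L^1(\gamma;E)}$. We must show $\norm{Px}=\norm{\linearization{\varphi}(H(w\cdot x))}\le (C+\varepsilon)\norm{\varphi|_A}_{\mathrm{Lip}}+\varepsilon\norm{\varphi}_{\mathrm{Lip}}$. The idea is to split $H(w\cdot x)$ into a part supported on $A$ and a small remainder. On the first part $\linearization{\varphi}$ only sees $\varphi|_A$; on the remainder we use the crude global bound.

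By hypothesis we may write $H(w\cdot x)=\xi+\eta$ with $\xi\in\F(A)$ and $\eta\in\varepsilon B_{\F(X)}$. Here $\F(A)$ is understood as the subspace of $\F(X)$ generated by $\delta_X(A)$, or by $\delta_X(A\cup\{0\})$ if $0\notin A$. Since $\norm{H}\le C$, we get $\norm{H(w\cdot x)}\le C$, hence $\norm{\xi}\le\norm{H(w\cdot x)}+\norm{\eta}\le C+\varepsilon$. By linearity, $Px=\linearization{\varphi}(\xi)+\linearization{\varphi}(\eta)$. The second term is bounded by $\norm{\linearization{\varphi}}\norm{\eta}\le\varepsilon\norm{\varphi}_{\mathrm{Lip}}$, using the universal property $\norm{\linearization{\varphi}}=\norm{\varphi}_{\mathrm{Lip}}$.

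For the first term we need $\norm{\linearization{\varphi}|_{\F(A)}}\le\norm{\varphi|_A}_{\mathrm{Lip}}$. Write $A_0=A\cup\{0\}$. Since $\varphi$ is pointed, $\varphi|_{A_0}$ is a pointed Lipschitz map on $A_0$. Its Lipschitz constant equals $\norm{\varphi|_{A}}_{\mathrm{Lip}}$ when $0\in A$. If $0\notin A$, one may instead assume $0\in A$ (as happens in the intended applications), or else accept the constant $\norm{\varphi|_{A_0}}_{\mathrm{Lip}}$; I would note this convention explicitly.

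The key fact is that $\linearization{\varphi}\circ\iota_{A_0,X}$ and $\linearization{\varphi|_{A_0}}$ agree on $\delta(A_0)$. Both are bounded linear maps, so they agree on all of $\F(A_0)$, and $\norm{\linearization{\varphi|_{A_0}}}=\norm{\varphi|_{A_0}}_{\mathrm{Lip}}$. Alternatively, this follows from Fact~\ref{fact:linearization-adjoint} with $N=A_0$: for $v^*\in V^*$ we have $\pair{\linearization{\varphi}(\xi)}{v^*}=\pair{\xi}{(v^*\circ\varphi)|_{A_0}}$. The function $(v^*\circ\varphi)|_{A_0}$ is $\norm{v^*}\norm{\varphi|_{A_0}}_{\mathrm{Lip}}$-Lipschitz, and norming over $v^*\in B_{V^*}$ gives the claim. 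Hence $\norm{\linearization{\varphi}(\xi)}\le(C+\varepsilon)\norm{\varphi|_A}_{\mathrm{Lip}}$.

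Adding the two bounds and taking the supremum over $x\in B_E$ yields the lemma. There is no real obstacle here. The only point needing care is that the linearization restricted to $\F(A)$ is controlled by the Lipschitz constant of $\varphi$ on $A$ alone; this is the restriction/extension compatibility already recorded in Fact~\ref{fact:linearization-adjoint}, together with the bookkeeping of the base point.
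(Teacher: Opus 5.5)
Your proof is correct and follows the paper's argument: split $H(w\cdot x)=\xi+\eta$ with $\xi\in\F(A)$ and $\norm{\eta}\le\varepsilon$, bound $\norm{\xi}\le C+\varepsilon$, and estimate $\linearization{\varphi}$ by $\norm{\varphi|_A}_{\mathrm{Lip}}$ on $\F(A)$ and by $\norm{\varphi}_{\mathrm{Lip}}$ on the remainder. Your base-point caveat is justified, since the paper's convention for $\F(N)$ presupposes $0\in N$; it is harmless here because in the application $A=V\cup K_\varepsilon$ contains $0$.
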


\begin{proof}
Fix $x\in B_E$. Then $H(w\cdot x)=\xi+\varepsilon\zeta$ for some $\xi\in \F(A)$ and $\zeta\in B_{\F(X)}$. Then
$$\norm{Px}=\norm{\linearization{\varphi}(\xi+\varepsilon\zeta)}\le \norm{\linearization{\varphi}|_{\F(A)}}\norm{\xi}+\varepsilon\norm{\linearization{\varphi}}
    =\norm{\varphi|_A}_{\mathrm{Lip}}\norm{\xi}+\varepsilon\norm{\varphi}_{\mathrm{Lip}}.$$
 Moreover,
 $$\norm{\xi}\le \norm{H(w\cdot x)}+\varepsilon\le C+\varepsilon.$$
 By combining the two inequalities we obtain the required estimate.
\end{proof}

The previous lemma will be applied together with the following one: 

\begin{lemma}[{\cite[Theorem A]{APPVUniformRegularity}}]\label{lem:uniform-regularity}
Let $M$ be a complete pointed metric space and let
$W\subset\F(M)$ be relatively weakly compact.
For every closed set $S\subset M$ containing $0$ and every
$\varepsilon>0$, there is a compact set $K\subset M\setminus S$ such that
\[
 W\subset\F(S\cup K)+\varepsilon B_{\F(M)}.
\]
\end{lemma}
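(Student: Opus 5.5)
The plan is to argue by contradiction and produce a sequence in $W$ that cannot have a weakly convergent subsequence, which contradicts the Eberlein--\v{S}mulian theorem.

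\emph{Step 1: reformulate the distance.} By Hahn--Banach, $\F(A)^\perp=\{f\in\Lipz(M): f|_A=0\}$ for closed $A\ni 0$. Hence
\[
\dist(\mu,\F(A))=\sup\{\pair{\mu}{f}: f\in B_{\Lipz(M)},\ f|_A=0\}.
\]
Suppose the conclusion fails for some $\varepsilon>0$. Then for every compact $K\subset M\setminus S$ there are $\mu\in W$ and $f\in B_{\Lipz(M)}$ with $f|_{S\cup K}=0$ and $\pair{\mu}{f}>\varepsilon$.

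\emph{Step 2: inductive construction.} Choose $\mu_n\in W$ and $f_n$ inductively. Before step $n$, approximate $\mu_1,\dots,\mu_{n-1}$ within $\varepsilon/2^{n+3}$ by finitely supported elements. Let $K_n$ be a compact neighbourhood (within $M\setminus S$) of their supports together with the supports of earlier choices. By Step 1, pick $\mu_n\in W$ and $f_n\in B_{\Lipz(M)}$ vanishing on $S\cup K_n$ with $\pair{\mu_n}{f_n}>\varepsilon$. By Eberlein--\v{S}mulian, pass to a subsequence with $\mu_n\to\mu$ weakly. Approximate $\mu$ by a finitely supported element, and include its support in $K_1$ from the start; a diagonal argument makes this consistent. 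Then $\nu_n=\mu_n-\mu$ is weakly null and still satisfies $\pair{\nu_n}{f_n}>\varepsilon/2$ for large $n$.

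\emph{Step 3: glue the $f_n$ into a bounded family of test functions.} This is the main obstacle. I would first truncate each $f_n$ using the weighting operators of Lemma~\ref{lem:free-weighting}, so that $f_n$ is supported in a region where $\nu_n$ is essentially concentrated, away from $K_n$ by a definite margin. The aim is that, for every choice of signs $\sigma_n$, the function $g=\sum_{n\in I}\sigma_n f_n$ (over a sparse index set $I$) is Lipschitz with $\norm{g}_{\Lip}\le c$, with $c$ an absolute constant such as $3$. The estimate would come from the localized pieces having separated supports, the separation being forced by the inclusion of earlier supports in the later $K_n$. One must also check that the cross terms $\pair{\nu_n}{f_m}$, $m\ne n$, are small. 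For $m<n$ this should follow from $\nu_n$ being weakly null, after passing to a further subsequence. For $m>n$ it should follow because $f_m$ vanishes on $K_m$, which contains the essential support of $\nu_n$.

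\emph{Step 4: the contradiction.} With this in hand, the standard Rosenthal/$\ell_1$-type argument applies. A weakly null sequence cannot satisfy $\pair{\nu_n}{g_I}\ge\varepsilon/4$ uniformly along a bounded family of functionals obtained by summing the $f_n$ over $n\in I$. Alternatively, a single $g=\sum_n f_n$ (or its sign-adjusted version) is a bounded functional with $\pair{\nu_n}{g}\ge\varepsilon/4$ for all $n$, contradicting $\nu_n\to0$ weakly.

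The delicate point I expect is the localization in Step 3. It requires controlling the Lipschitz constant of the sum $\sum_n f_n$ across different regions, which in a general metric space needs a careful choice of cut-off radii around the compact sets $K_n$. This is why I would rely on the uniform tightness of weakly compact sets furnished by the finitely supported approximations.
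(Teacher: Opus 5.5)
The paper does not prove this lemma. It is quoted from \cite[Theorem~A]{APPVUniformRegularity} and used only as a black box in Lemma~\ref{lem:ball of L2}, with $M=X$ and $S=V$. So your outline has to carry the whole theorem, and it does not. Step~3, which you yourself flag as the main obstacle, is where all the content lies, and the mechanism you suggest cannot work from what Steps~1--2 provide.

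First, a compact neighbourhood of a finite set need not exist. No point of an infinite-dimensional Banach space has one, and that is precisely the case the paper needs. So the negated hypothesis only makes $f_m$, $m>n$, vanish on the finite supports of your approximants $\tilde\mu_n$. The mass detected by $f_m$ may then lie arbitrarily close to $S\cup K_m$, and nothing gives the ``definite margin''.

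Second, and decisively, the properties you secure are compatible with weak nullness, so no gluing can turn them into a contradiction. Take $M=[0,1]$ and $S=\{0\}$, and identify $\F([0,1])$ with $L^1[0,1]$ via $\delta(t)\mapsto\mathbf 1_{[0,t]}$. Let $\mu_n$ correspond to the $n$-th Rademacher function $r_n$, and put $f_n(x)=\int_0^x r_n$. Then:
\begin{itemize}
\item $\mu_n$ is supported by the dyadic points of level $n$, and $(\mu_n)$ is weakly null;
\item $\norm{f_n}_{\Lip}=1$, and $f_n$ vanishes on $S\cup K_n$, where $K_n=\{k2^{1-n}:1\le k\le 2^{n-1}\}\supset\bigcup_{m<n}\supp\mu_m$;
\item $\pair{\mu_n}{f_n}=1$, and all cross terms $\pair{\mu_n}{f_m}$, $m\ne n$, vanish.
\end{itemize}
Yet $\norm{\sum_{n\in I}\sigma_nf_n}_{\Lip}=\abs{I}$ for every finite $I$ and all signs. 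Every point of $[0,1]$ lies within $2^{-n}$ of $S\cup K_n$. And no bounded $g$ satisfies $\pair{\mu_n}{g}\ge\varepsilon/4$ for all $n$.

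What is missing is a source of a uniform separation scale, and the failure of the conclusion for compact sets does not supply one. You get a scale only by running your contradiction argument against closed $\delta$-neighbourhoods $F_\delta$ of finite sets, with $\delta>0$ fixed. Suppose no finite $F$ gives $W\subset\F(S\cup F_\delta)+\varepsilon B_{\F(M)}$. Then the $f_n$ may be taken to vanish on the $\delta$-neighbourhoods of the earlier supports. Cut-offs at the fixed scale $\delta$ then give pieces with pairwise disjoint nonzero sets and a common Lipschitz bound, at least for bounded $M$. Since a sum of $L$-Lipschitz functions with pairwise disjoint nonzero sets is $2L$-Lipschitz, your Steps~3--4 then go through.

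But this only shows that at every scale a finite net captures $W$ up to $\varepsilon$. Passing from these neighbourhoods to a single compact $K$ with $\dist(K,S)>0$ is not formal. Closeness of $\mu$ to each $\F(A_k)$ does not give closeness to $\F(\bigcap_kA_k)$, even for a single $\mu$ and decreasing $A_k$. So weak compactness has to be used a second time, and this stage is absent from your proposal.

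A smaller point: you cannot put the support of the weak limit $\mu$ into $K_1$, since $\mu$ depends on the sequence built from $K_1$. Use Mazur's lemma instead. It writes $\mu$ as a norm limit of convex combinations of finitely many $\mu_k$, whose approximants are supported in $K_n$ for large $n$, and this gives $\pair{\mu}{f_n}\to0$.
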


Note that the space $L^2(\gamma;E)$ is reflexive (as $E$ is finite-dimensional) and so its unit ball is weakly compact. Therefore the following lemma is an immediate corollary to the previous one.

\begin{lemma}\label{lem:ball of L2}
For every $\varepsilon>0$ there is a compact subset $K_\varepsilon\subset X\setminus V$ such that
$$H(B_{L^2(\gamma;E)})\subset \F(V\cup K_\varepsilon)+\varepsilon B_{\F(X)}.$$
\end{lemma}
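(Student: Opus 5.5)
The statement follows from Lemma~\ref{lem:uniform-regularity} once we know that $H(B_{L^2(\gamma;E)})$ is relatively weakly compact in $\F(X)$. The plan is to prove that compactness and then invoke the lemma with $M=X$ and $S=V$.

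First we set up the objects. We view $L^2(\gamma;E)$ as a subspace of $L^1(\gamma;E)$. Since $\gamma$ is a probability measure, the inclusion $L^2(\gamma;E)\hookrightarrow L^1(\gamma;E)$ is a bounded linear operator of norm at most one. Its composition with $H$ is therefore a bounded linear operator $H_2\colon L^2(\gamma;E)\to\F(X)$ with $\norm{H_2}\le C$.

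Next we check the hypotheses of Lemma~\ref{lem:uniform-regularity}.
\begin{enumerate}[$(i)$]
 \item $X$ is a Banach space, hence a complete pointed metric space with distinguished point $0$.
 \item $V$ is finite-dimensional, hence closed in $X$, and it contains $0$.
 \item $E$ is finite-dimensional, so $L^2(\gamma;E)$ is isomorphic to a finite direct sum of copies of $L^2(\gamma)$. It is therefore reflexive, and its closed unit ball is weakly compact.
 \item Bounded linear operators are weak-to-weak continuous, so $H_2(B_{L^2(\gamma;E)})$ is a weakly compact subset of $\F(X)$. In particular it is relatively weakly compact.
\end{enumerate}

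We now apply Lemma~\ref{lem:uniform-regularity} with $M=X$, $W=H(B_{L^2(\gamma;E)})$, $S=V$ and the given $\varepsilon>0$. It yields a compact set $K_\varepsilon\subset X\setminus V$ such that
\[
 H(B_{L^2(\gamma;E)})\subset\F(V\cup K_\varepsilon)+\varepsilon B_{\F(X)},
\]
which is exactly the claim. Here $\F(V\cup K_\varepsilon)$ is understood as a subspace of $\F(X)$ via the identification of Section~\ref{subsec:lipschitz-free}. This is consistent because $V\cup K_\varepsilon$ is closed and contains $0$.

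No step presents a real obstacle. The one point needing care is that $H$ is defined on $L^1(\gamma;E)$ rather than on $L^2(\gamma;E)$. Reflexivity of $L^1(\gamma;E)$ fails, so the reduction to $L^2$ through the contractive inclusion is what supplies the weak compactness.
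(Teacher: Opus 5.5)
Your proposal is correct and follows the paper's argument: the contractive inclusion $L^2(\gamma;E)\hookrightarrow L^1(\gamma;E)$ and the reflexivity of $L^2(\gamma;E)$ make $H(B_{L^2(\gamma;E)})$ weakly compact in $\F(X)$. Lemma~\ref{lem:uniform-regularity} with $M=X$ and the closed set $S=V$ then gives the claim, which is what the paper calls an immediate corollary.
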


We continue by a technical lemma on a precise estimate of the norm of $P$.

\begin{lemma}\label{lem:estimate}
    Fix $\pi:X\to V$ a bounded linear projection. Let $w\in L^1(\gamma)$ be a non-negative function of norm one. Assume, moreover, that $w\in L^2(\gamma)$. Let $\psi:V\to V$ be a pointed bounded Lipschitz map. Then for each $\varepsilon>0$ there is a Lipschitz extension $\varphi:X\to V$ of $\psi$ such that the operator $P$ provided by Proposition~\ref{prop:projekce-konstrukce} satisfies
    $$\norm{P}\le(C+\varepsilon\norm{w}_2)\max\{\norm{\psi}_{\mathrm{Lip}},\tfrac1r\norm{\psi}_\infty\}+\varepsilon\norm{w}_2(\norm{\pi}\cdot\norm{\psi}_{\mathrm{Lip}}+\tfrac1r\norm{\psi}_\infty),$$
    where $r=\dist(V,K_\varepsilon)$.
 \end{lemma}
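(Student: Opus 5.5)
We extend $\psi$ from $V\cup K_\varepsilon$ in the simplest way and then apply Lemma~\ref{lem:vodhad} with $A=V\cup K_\varepsilon$. Concretely, put $\varphi=\psi$ on $V$ and $\varphi=0$ on $K_\varepsilon$. On $A=V\cup K_\varepsilon$ this map satisfies
\[
\norm{\varphi|_A}_{\mathrm{Lip}}\le\max\{\norm{\psi}_{\mathrm{Lip}},\tfrac1r\norm{\psi}_\infty\}.
\]
Indeed, for $x,y\in V$ we use the Lipschitz constant of $\psi$. For $x\in V$ and $y\in K_\varepsilon$ we have $\norm{\varphi(x)-\varphi(y)}=\norm{\psi(x)}\le\norm{\psi}_\infty\le\frac1r\norm{\psi}_\infty\norm{x-y}$, since $\norm{x-y}\ge r$. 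Pairs in $K_\varepsilon$ contribute nothing. Note that $r>0$ because $K_\varepsilon$ is compact and disjoint from the closed set $V$.

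To extend $\varphi$ to all of $X$ with values in $V$, we work in coordinates rather than use a general vector-valued extension. Write $\pi x$ for the projection to $V$ and define the extension by
\[
\varphi(x)=\psi(\pi x)\cdot\max\Bigl\{0,1-\tfrac1r\dist(x,V)\Bigr\}.
\]
This is a possibly different but concrete formula. It equals $\psi$ on $V$ and vanishes at distance $\ge r$ from $V$, in particular on $K_\varepsilon$. Its global Lipschitz constant is at most $\norm{\pi}\norm{\psi}_{\mathrm{Lip}}+\frac1r\norm{\psi}_\infty$, by the product estimate: the first factor is $\norm{\pi}\norm{\psi}_{\mathrm{Lip}}$-Lipschitz and bounded by $\norm{\psi}_\infty$, and the second factor is $\frac1r$-Lipschitz with values in $[0,1]$. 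On $A$ it restricts to the map above, so the bound on $\norm{\varphi|_A}_{\mathrm{Lip}}$ still holds. Also $\varphi(0)=0$, so $\varphi$ is pointed.

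It remains to verify the hypothesis of Lemma~\ref{lem:vodhad} with $\varepsilon$ replaced by $\varepsilon\norm{w}_2$. For $x\in B_E$ we have $\norm{w\cdot x}_{L^2(\gamma;E)}\le\norm{w}_2$, so $w\cdot x/\norm{w}_2\in B_{L^2(\gamma;E)}$. Lemma~\ref{lem:ball of L2} and homogeneity then give
\[
H(wB_E)\subset\F(V\cup K_\varepsilon)+\varepsilon\norm{w}_2B_{\F(X)}.
\]
Here $H$ is applied to elements of $L^2\subset L^1$, which is fine since $\gamma$ is finite. Lemma~\ref{lem:vodhad} now yields
\[
\norm{P}\le(C+\varepsilon\norm{w}_2)\norm{\varphi|_A}_{\mathrm{Lip}}+\varepsilon\norm{w}_2\norm{\varphi}_{\mathrm{Lip}},
\]
and inserting the two Lipschitz bounds gives exactly the stated inequality.

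The only mildly delicate step is the Lipschitz estimate of the product extension. Write $\varphi=a\cdot b$ with $a(x)=\psi(\pi x)$ and $b(x)=\max\{0,1-\frac1r\dist(x,V)\}$. Then
\[
\norm{a(x)b(x)-a(y)b(y)}\le\norm{a(x)-a(y)}\,|b(x)|+\norm{a(y)}\,|b(x)-b(y)|,
\]
which is bounded using $|b|\le1$, $\norm{a}\le\norm{\psi}_\infty$, and the Lipschitz constants of $a$ and $b$. This is routine.
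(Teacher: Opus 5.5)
Your proposal is correct and matches the paper's proof: it uses the same extension $\varphi=v\cdot(\psi\circ\pi)$ with $v=\max\{0,1-\tfrac1r\dist(\cdot,V)\}$, the same two Lipschitz bounds (on $V\cup K_\varepsilon$ and globally), and the same inclusion $H(wB_E)\subset\F(V\cup K_\varepsilon)+\varepsilon\norm{w}_2B_{\F(X)}$. These are then fed into Lemma~\ref{lem:vodhad}, exactly as in the paper.
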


\begin{proof}
    Fix $\varepsilon>0$ and let $K_\varepsilon$ be the compact set provided by Lemma~\ref{lem:ball of L2}. Then
    $$H(w B_E)\subset H(\norm{w}_2 B_{L^2(\gamma;E)})\subset\norm{w}_2(\F(V\cup K_\varepsilon)+\varepsilon B_{\F(X)})=\F(V\cup K_\varepsilon)+\varepsilon \norm{w}_2 B_{\F(X)}.$$
    
    Let $r=\dist(K_\varepsilon,V)$. Then $r>0$, as $V$ is closed, $K_\varepsilon$ compact and $V\cap K_\varepsilon=\emptyset$.     Let $v:X\to\R$ be defined by
    $$v(x)=\max\{1-\tfrac{1}{r}\dist(x,V),0\},\quad x\in X.$$
    Then $v$ is clearly $\frac1r$-Lipschitz, $v=1$ on $V$, $v=0$ on $K_\varepsilon$ and $v$
    attains values in $[0,1]$. Set
    $$\varphi= v\cdot (\psi\circ\pi).$$
    Then $\varphi$ is a mapping $X\to V$, $\varphi$ extends $\psi$ and $\varphi=0$ on $K_\varepsilon$. It easily follows that
    $$\norm{\varphi|_{V\cup K_\varepsilon}}_{\mathrm{Lip}}\le \max\{\norm{\psi}_{\mathrm{Lip}},\frac1r\norm{\psi}_\infty\}.$$
    Moreover, since $\psi\circ\pi$ is $\norm{\pi}\cdot\norm{\psi}_{\mathrm{Lip}}$-Lipschitz and
    bounded by $\norm{\psi}_\infty$, and $v$ is $\frac1r$-Lipschitz and bounded by $1$, we get that
    $$\norm{\varphi}_{\mathrm{Lip}}\le \norm{\pi}\cdot\norm{\psi}_{\mathrm{Lip}}+\tfrac1r\norm{\psi}_\infty.$$
    Therefore, Lemma~\ref{lem:vodhad} yields the required estimate.
   \end{proof}

\subsection{Construction of a Lipschitz mapping and completion of the proof}

We are now going to construct a Lipschitz mapping $\psi:V\to V$ which will enable us to use Lemma~\ref{lem:estimate} and Proposition~\ref{prop:projekce-konstrukce} to finish the proof.

\begin{lemma}\label{lem:periodic-densities}
For every $\varepsilon>0$ there is $\alpha>0$ such that, for every $r>0$, there are an open set $A\subset V$ and a pointed Lipschitz map $\psi:V\to V$ satisfying
\[
 \gamma(A)\ge\alpha,\qquad
 \norm\psi_{\Lip}\le1+\varepsilon,\qquad
 \norm{\psi}_\infty\le r,\qquad
 \psi'(s)=\operatorname{Id}_V \text{ for every }s\in A.
\]
\end{lemma}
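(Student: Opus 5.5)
The plan is to build $\psi$ by translating copies of one fixed radial ``bump'' map $T\colon V\to V$ along a lattice. The map $T$ equals the identity near $0$ and vanishes outside a larger ball. A periodic construction gives a set $A$ of fixed density $\theta$ (depending only on $\varepsilon$ and $\dim V$), and a Fubini averaging over translations converts this density into $\gamma(A)\ge\theta$. We may take $\alpha=\theta$.

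\emph{Step 1 (the bump).} Put $\eta=\varepsilon$ and $R=\rho e^{1/\eta}$ for a scale $\rho>0$. Define $m\colon[0,\infty)\to[0,1]$ by $m=1$ on $[0,\rho]$, $m(t)=1-\eta\log(t/\rho)$ on $[\rho,R]$, and $m=0$ on $[R,\infty)$. Then $m$ is nonincreasing and $t\,\abs{m'(t)}\le\eta$ wherever $m'$ exists. Set $T(u)=m(\norm u)\,u$, so $T=\operatorname{Id}$ on $U_V(0,\rho)$, $T=0$ outside $B_V(0,R)$, and $\norm{T}_\infty\le\rho$. To bound the Lipschitz constant, take $u,v$ with $a=\norm u\ge b=\norm v$. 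Then
\[
\norm{T u-T v}\le m(a)\norm{u-v}+\abs{m(a)-m(b)}\,b .
\]
By the mean value theorem, $\abs{m(a)-m(b)}\le(a-b)\,\eta/\xi$ for some $\xi\in[b,a]$. Since $\xi\ge b$, the second term is at most $\eta(a-b)\le\eta\norm{u-v}$, and hence $\norm{T}_{\Lip}\le1+\eta$ in the given norm of $V$. This logarithmic profile is the key point. A linear cut-off leads only to the constant $2$ (as for radial retractions in general norms), and the scale-invariant decay $t\,m'(t)=O(\eta)$ is what brings the constant down to $1+\varepsilon$.

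\emph{Step 2 (periodization).} Identify $V$ with $\R^d$ and fix a lattice $\Lambda=L\mathbb Z^d$, with $L$ a fixed multiple of $R$ chosen so that the balls $B(\lambda,R)$, $\lambda\in\Lambda$, are pairwise disjoint. Define $\psi_0(s)=T(s-\lambda)$ for $s\in B(\lambda,R)$, and $\psi_0(s)=0$ elsewhere. Then $\psi_0$ is continuous and is $(1+\eta)$-Lipschitz on each ball and on the complement of the balls. Splitting any segment at the finitely many points where it crosses the boundaries of the balls gives $\norm{\psi_0}_{\Lip}\le1+\eta$ globally. Moreover $(\psi_0)'=\operatorname{Id}$ on $A_0=\bigcup_\lambda U(\lambda,\rho)$, and $\norm{\psi_0}_\infty\le\rho$. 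Because $L/\rho$ depends only on $\varepsilon$, the density $\theta=\mathcal L^d(U(0,\rho))/L^d$ of $A_0$ depends only on $\varepsilon$ and $d$. Choose $\rho=r/2$.

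\emph{Step 3 (measure and normalization).} Let $Q$ be a fundamental cell of $\Lambda$. By Fubini,
\[
\frac1{\mathcal L^d(Q)}\int_Q\gamma(A_0+c)\,dc=\int_V\frac{\mathcal L^d(Q\cap(A_0-x))}{\mathcal L^d(Q)}\,d\gamma(x)=\theta,
\]
because each translate of $A_0$ meets $Q$ in measure $\theta\,\mathcal L^d(Q)$ by periodicity. Hence there is $c$ with $\gamma(A_0+c)\ge\theta$. Put $A=A_0+c$, which is open, and $\psi(s)=\psi_0(s-c)-\psi_0(-c)$. Then $\psi$ is pointed, satisfies $\psi'=\operatorname{Id}$ on $A$ and $\norm\psi_{\Lip}\le1+\varepsilon$, and $\norm\psi_\infty\le2\rho=r$. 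Since $\theta$ does not depend on $r$, this gives the lemma with $\alpha=\theta$.

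The main obstacle is Step 1: obtaining a Lipschitz constant $1+\varepsilon$, rather than something like $2$, for a cut-off of the identity in an arbitrary norm on $V$. Steps 2 and 3 are routine once $T$ is available.
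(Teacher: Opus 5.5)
Your construction works in outline, but the bound $\norm{T}_\infty\le\rho$ in Step~1 is false for every $\varepsilon<1$. So $\norm{\psi}_\infty\le r$ does not follow from your choice $\rho=r/2$.

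\emph{Why the bound fails.} We have $\norm{T(u)}=h(\norm u)$ with $h(t)=t\,m(t)$. On $(\rho,R)$ one gets $h'(t)=m(t)-\eta$, so $h$ keeps increasing past $t=\rho$ until $m(t)=\eta$. This gives $\norm{T}_\infty=\eta\rho e^{1/\eta-1}=\eta R/e>\rho$.

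\emph{The repair.} Your construction is scale-invariant, so the fix costs nothing. Use the trivial bound $\norm{T}_\infty\le R=\rho e^{1/\varepsilon}$ and take $\rho=\frac r2e^{-1/\varepsilon}$. Then $L/\rho$, and hence $\theta$, is unchanged, so $\alpha=\theta$ still works. The paper faces the same issue: its bump also has sup norm well above the radius on which it is the identity. It handles this by the final rescaling $\psi_{a,t}$ with $t=2R/r$.

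\emph{The remark on linear cut-offs.} Your claim that a linear cut-off only yields the constant $2$ is incorrect. The paper uses $\chi$ linear on $[1,R]$ with $R=1+1/\varepsilon$. The estimate closes because the first term carries the small factor $\chi(a)=\frac{R-a}{R-1}$, giving $\frac{R-a}{R-1}\norm{u-v}+\frac{(a-b)b}{R-1}\le\frac{R-a+b}{R-1}\norm{u-v}\le\frac{R}{R-1}\norm{u-v}$. Your logarithmic profile is valid but unnecessary, and it inflates the outer-to-inner ratio from $1+1/\varepsilon$ to $e^{1/\varepsilon}$.

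\emph{A small point in Step~2.} In a non-strictly convex norm, a segment can meet a sphere in a nondegenerate segment. So split $[x,y]$ at the endpoints of the finitely many convex pieces $[x,y]\cap B(\lambda,R)$, rather than at `finitely many crossing points'.

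\emph{Comparison with the paper.} With this fix your proof is correct. It differs from the paper's mainly in how a translate of large $\gamma$-measure is located.
\begin{itemize}
\item The paper periodizes over a maximal $3R$-separated set rather than a lattice. It uses compactness of $3RB_V$ to find a finite set $F$ of $N$ points with $F+G=V$. It then pigeonholes: some rescaled translate of the identity region has $\gamma$-measure at least $1/N$.
\item You instead average $c\mapsto\gamma(A_0+c)$ over a fundamental cell via Fubini. This is the continuous version of the same pigeonhole, and it gives the explicit density $\theta$ as $\alpha$.
\end{itemize}
Both arguments use only that $\gamma$ is a probability measure on $V$. The paper's version needs no lattice, Lebesgue measure or fundamental domain, while yours makes $\alpha$ explicit.
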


\begin{proof}[Proof of Lemma~\ref{lem:periodic-densities}]
Fix $\varepsilon>0$ and put $R=1+1/\varepsilon$. We proceed in several steps.

\smallskip

\noindent\textbf{Step 1.}
We construct a map $F:V\to V$ which equals the identity on
$B_V$, vanishes outside $RU_V$, and satisfies
$\norm{F}_{\Lip}\le1+\varepsilon$ and 
$\norm{F}_\infty\le R$.

\smallskip

Let $\chi:[0,\infty)\to[0,1]$ be constant $1$ on $[0,1]$, constant zero on $[R,\infty)$ and linear on $[1,R]$. Then $\chi$ is $\frac{1}{R-1}=\varepsilon$-Lipschitz. Further, define
\[ 
 F(v)=\chi(\norm v)\cdot v, \quad v\in V.
\]
Then $F$ clearly equals the identity on $B_V$ and vanishes outside $RB_V$. Obviously, $\norm{F}_\infty\le R$ (in fact, $\norm{F}_\infty=\frac{R^2}{4(R-1)}$ if $\varepsilon<1$).
Next we show that $F$ is $1+\varepsilon$-Lipschitz. To this end we use the explicit formula
$$\chi(t)=\frac{R-t}{R-1},\quad t\in [1,R].$$
For $x,y\in RB_V\setminus U_V$ with $\norm x\ge\norm y$, we have
\[
 \begin{aligned}
 \norm{F(x)-F(y)}
 &\le\chi(\norm x)\norm{x-y}
       +\norm y\,|\chi(\norm x)-\chi(\norm y)|\\
  &=  \frac{R-\norm{x}}{R-1}\norm{x-y} + \norm{y} \cdot\abs{\frac{\norm{x}-\norm{y}}{R-1}}
 \\ &\le\frac{R-\norm x+\norm y}{R-1}\norm{x-y} \le\frac{R}{R-1}\norm{x-y}
 =(1+\varepsilon)\norm{x-y}.
 \end{aligned}
\]
Since $F$ is continuous, equals the identity on $B_V$, and
vanishes outside $RU_V$, a line-segment argument gives
$\norm{F}_{\Lip}\le1+\varepsilon$.

\smallskip
\noindent\textbf{Step 2.}
Choose a maximal $3R$-separated set $\Lambda\subset V$
containing $0$. On each ball $B_V(z,R)$,
$z\in\Lambda$, put $\psi_0(s)=F(s-z)$, and put
$\psi_0(s)=0$ elsewhere. These balls are disjoint,
$\psi_0(0)=0$, and $\norm{\psi_0}_\infty\le R$.
Since each translated copy of $F$ vanishes on the boundary
of its ball, a line-segment argument gives
$\norm{\psi_0}_{\Lip}\le1+\varepsilon$.
Put $G=\bigcup_{z\in\Lambda}U_V(z,1)$. Then $G$ is an open set
and $\psi_0'=\operatorname{Id}_V$ on $G$.

\smallskip
\noindent\textbf{Step 3.} There is a finite set $F\subset V$ such that $F+G=V$.

Indeed, maximality of $\Lambda$ gives $V=\Lambda+3RU_V$.
By compactness of the set $3RB_V$ we may choose $a_1,\ldots,a_N\in V$ such that
$3RB_V\subset\bigcup_{j=1}^N U_V(a_j,1)$. 
Thus it is enough to set $F=\{a_1,\dots,a_N\}$. Then
$$F+G=\bigcup_{z\in\Lambda} \bigcup_{j=1}^N U(z+a_j,1)\supset\bigcup_{z\in\Lambda}  B(z,3R)=V.$$

\smallskip
\noindent\textbf{Step 4.} Given $t>0$ and $a\in F$ define
\[
 \psi_{a,t}(s)=
 \frac{\psi_0(ts-a)-\psi_0(-a)}{t},
 \qquad s\in V.
\]
Then $\psi_{a,t}(0)=0$, it is $1+\varepsilon$-Lipschitz and $\norm{\psi_{a,t}}_\infty\le \frac{2R}{t}$. Moreover, $\psi_{a,t}'=\operatorname{Id}_V$ on the open set 
$$G_{a,t}=\frac1t(a+G).$$

\smallskip
\noindent\textbf{Step 5.} Conclusion.

Set $\alpha=1/N$ (recall that $N$ is the cardinality of $F$). Fix $r>0$. Put $t=2R/r$. By Steps 3 and 4 we have $V=\bigcup_{a\in F}G_{a,t}$. So, there is some $a\in F$ such that $\gamma(G_{a,t})\ge\frac1N=\alpha$. We conclude by setting $\psi=\psi_{a,t}$ and $A=G_{a,t}$.
\end{proof}

\begin{proof}[Synthesis of the proof of Theorem~\ref{thm:bidual-complemented-implies-Hilbert}]
 By Proposition~\ref{prop:projekce-konstrukce} it is enough to show that
 given $\varepsilon>0$ there is a pointed Lipschitz map $\varphi: X\to V$ and a non-negative norm-one weight $w\in L^1(\gamma)$ such that $(\varphi|_{U_V})'(t)=\operatorname{Id}_V$ for $\gamma$-almost all $t\in U_V$ satisfying $w(t)>0$ and, moreover, the operator $P$ provided by Proposition~\ref{prop:projekce-konstrukce} has norm at most $C+\varepsilon$.

So, fix $\varepsilon>0$. Further, fix $\pi:X\to V$ a bounded linear projection.
Let $\alpha>0$ be the number provided by Lemma~\ref{lem:periodic-densities} (corresponding to the chosen $\varepsilon$). Let $\varepsilon'>0$ be such that
$$\varepsilon'<\frac{\varepsilon \alpha^{1/2}}{\norm{\pi}}.$$ 
Let $K_{\varepsilon'}$ be the compact set provided by Lemma~\ref{lem:ball of L2} and set $r=\dist(K_{\varepsilon'},V)$. Let $\psi$ and $A$ be the objects provided by Lemma~\ref{lem:periodic-densities}. Set
$$w=\frac1{\gamma(A)}\chi_A.$$
Then $w$ is a positive weight, $\norm{w}_1=1$ and $\norm{w}_2=\frac{1}{\gamma(A)^{1/2}}\le \frac{1}{\alpha^{1/2}}$.

Let $\varphi:X\to V$ be an extension of $\psi$ provided by Lemma~\ref{lem:estimate} for the parameters $w$ and $\varepsilon'$. Let $P$ be the operator provided by Proposition~\ref{prop:projekce-konstrukce}. Since $(\varphi|_V)'=\psi'$, Proposition~\ref{prop:projekce-konstrukce} yields that $P$ is a projection. Furthermore by the estimate in Lemma~\ref{lem:estimate} we get
$$\begin{aligned}
    \norm{P}&\le(C+\varepsilon'\norm{w}_2)\max\{\norm{\psi}_{\mathrm{Lip}},\tfrac1r\norm{\psi}_\infty\}+\varepsilon'\norm{w}_2(\norm{\pi}\cdot\norm{\psi}_{\mathrm{Lip}}+\tfrac1r\norm{\psi}_\infty)\\
    &\le (C+\varepsilon)\max\{1+\varepsilon,1\}+ \varepsilon(1+\varepsilon)+\varepsilon.
\end{aligned}$$
It is clear that this estimate may be arbitrarily close to $C$. This completes the proof.
\end{proof}

\section{A byproduct of the proof}

In this section we formulate a proposition which was essentially proved within the previous section. We point it out because we are convinced that it may be useful also in further situations.

\begin{proposition}\label{app:lem:projection-extraction}
Let $X$ be a Banach space and let $V$ and $E$ be finite-dimensional linear subspaces with $\{0\}\ne V\subset E\subset X$. Let $\Gamma$ be a non-degenerate Gaussian probability measure on $V$ and let $\gamma$ denote its normalized restriction to $U_V$. Then
\[\begin{aligned}
 \lambda(V,E)=\min\Big\{\norm H:\quad& H:L^1(\gamma;E)\to\F(X) \mbox{ bounded linear},\\ & H|_{L^1(\gamma;V)}=S\Big\},\end{aligned}
\]
where $S$ is the quotient operator from Theorem~\ref{thm:main} and $\F(U_V)$ is canonically identified with a subspace of $\F(X)$.
\end{proposition}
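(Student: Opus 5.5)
We prove the two inequalities separately and show that the minimum is attained.

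For ``$\le$'' we take any bounded linear $H:L^1(\gamma;E)\to\F(X)$ with $H|_{L^1(\gamma;V)}=S$ and rerun the argument of Section~\ref{sec:Mrep-complemented-implies-Hilbert} with $X$ replaced by $E$. Nothing there used how $H$ was obtained; only $\norm H\le C$ and $H|_{L^1(\gamma;V)}=S$ entered, so we put $C:=\norm H$. Concretely, we fix $\varepsilon>0$ and a projection $\pi:E\to V$, take $\alpha$, $\psi$, $A$ from Lemma~\ref{lem:periodic-densities}, and set $w=\gamma(A)^{-1}\chi_A$. We then use Lemma~\ref{lem:ball of L2}, which rests on Lemma~\ref{lem:uniform-regularity} and applies to any bounded operator on the reflexive space $L^2(\gamma;E)$. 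Lemma~\ref{lem:estimate} gives a Lipschitz map $\varphi:X\to V$ extending $\psi$. Proposition~\ref{prop:projekce-konstrukce} then yields a projection $P:E\to V$ with $\norm P$ arbitrarily close to $\norm H$. Hence $\lambda(V,E)\le\norm H$. One point to check is that only the identity $H(w\cdot x)=S(w\cdot x)$ for $x\in V$ is used in Proposition~\ref{prop:projekce-konstrukce}, so $E\neq X$ causes no trouble.

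For ``$\ge$'' and attainment we must construct, for a projection $P_0:E\to V$ with $\norm{P_0}=\lambda(V,E)$ (such a projection exists by compactness, as in Lemma~\ref{lem:projection-constant-local}), an extension $H$ of $S$ with $\norm H\le\norm{P_0}$. The natural candidate is pointwise composition: $H\g:=S(P_0\circ\g)$. This is well defined, since $P_0\circ\g\in L^1(\gamma;V)$ with $\norm{P_0\circ\g}_1\le\norm{P_0}\norm{\g}_1$. Since $S$ is a contraction into $\F(U_V)\subset\F(X)$, we get $\norm{H}\le\norm{P_0}$. Moreover, $H\g=S\g$ when $\g$ takes values in $V$, because then $P_0\circ\g=\g$. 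Thus $H$ is admissible and $\norm H\le\lambda(V,E)$.

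Combining the two parts, every admissible $H$ satisfies $\norm H\ge\lambda(V,E)$, and the $H$ just built has $\norm H\le\lambda(V,E)$. So the infimum equals $\lambda(V,E)$ and is a minimum. The main step to verify carefully is the first part: we must confirm that Lemmas~\ref{lem:ball of L2} and \ref{lem:estimate} go through verbatim for an arbitrary admissible $H$ and with the ambient space $X$ (Lipschitz extension to $X$ via $\pi\circ$ on $E$ only). If $\varphi$ needs to be defined on all of $X$, we extend $\psi\circ\pi$ from $E$ to $X$ by a Lipschitz extension into the finite-dimensional space $V$. That extension costs only a constant factor, and that factor multiplies only the $\varepsilon'$-terms in Lemma~\ref{lem:estimate}, which can be absorbed by choosing $\varepsilon'$ smaller.
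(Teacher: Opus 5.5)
Your proposal is correct and follows the paper's proof. The inequality $\lambda(V,E)\le\norm H$ comes from rerunning the argument of Section~\ref{sec:Mrep-complemented-implies-Hilbert} with $C=\norm H$, and the reverse inequality together with attainment comes from $H\g=S(P\circ\g)$ for an optimal projection $P:E\to V$. Your detour through $\pi:E\to V$ plus a Lipschitz extension is unnecessary: $V$ is finite-dimensional, so you can take a bounded linear projection $\pi:X\to V$ directly, as in Lemma~\ref{lem:estimate}. Note also that the ambient space must stay $X$, not $E$, because $H$ takes values in $\F(X)$.
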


\begin{proof}
    Inequality `$\le$' with infimum in place of minimum follows from the above proof. More precisely, it may be deduced by combining Proposition~\ref{prop:projekce-konstrukce}, Lemma~\ref{lem:ball of L2}, Lemma~\ref{lem:estimate} and Lemma~\ref{lem:periodic-densities} in the way explained in the proof of Theorem~\ref{thm:bidual-complemented-implies-Hilbert}.

    The existence of a minimum and the converse inequality are now quite easy: Let $P:E\to V$ be a projection with $\norm{P}=\lambda(V,E)$. Define 
    $$H\g=S(P\circ\g), \quad \g\in L^1(\gamma;E).$$ 
    Then $H$ is an extension of $S$ with $\norm H\le\norm P=\lambda(V,E)$.
\end{proof}

\subsection*{Data availability statement:} No data sets were used or generated within our research.

\subsection*{Declaration of conflict of interests:} None.

\subsection*{Statement on the use of AI tools:}
The authors acknowledge the use of OpenAI's Sol 5.6 and Astra
models in developing this paper. We asked the models to suggest
solutions to intermediate problems, supplying some of our earlier
ideas and approaches. We then worked to understand and develop
the proposed arguments and posed further questions arising naturally
from this analysis. Through several iterations, this process led
to the main ideas of the proof. After working through the arguments,
we rewrote and reorganized the manuscript to reflect our understanding
and mathematical style, with particular attention to separating
the main ideas from the technical details. The authors take full
responsibility for the results and proofs presented in the paper.

\bibliographystyle{acm}
\bibliography{hilb-com}

\end{document}